\documentclass[reqno,11pt,twoside]{article}
\usepackage[hmargin=1.in, vmargin=1.in, marginparwidth=0.8in, marginparsep=0.1in, a4paper, centering]{geometry}
\usepackage[T1]{fontenc}
\usepackage[lining]{ebgaramond}
\usepackage{amsmath,amsthm}
\usepackage[ebgaramond]{newtxmath}
\usepackage{enumerate}
\usepackage[font=small, labelfont=bf, labelsep=colon, margin=0.5in]{caption}
\usepackage{graphicx}

\usepackage{xcolor}
\usepackage{bm}
\usepackage{etoolbox, apptools}
\usepackage{comment}
\renewcommand\footnotemark{}
\usepackage[nobottomtitles,pagestyles]{titlesec}
\titleformat{\section}
{\normalfont\large\bfseries}
{\filcenter\IfAppendix{Appendix }{\S}\thesection.}{1ex}{\filcenter}
\titleformat{\subsection}
{\normalfont\bfseries}
{\filcenter\S\thesubsection.}{1ex}{\filcenter}
\usepackage[colorlinks,allcolors=blue,pagebackref]{hyperref}
\usepackage{pifont}
\renewcommand*{\backrefalt}[4]{%
\ifcase #1 %
\textcolor{red}{No citations}%
\or
\ding{43}~p.~#2%
\else
\ding{43}~pp.~#2%
\fi}
\usepackage{breakcites}
\newcommand{\N}{\mathcal N}
\newcommand{\pa}{\partial\Omega}
\newcommand{\C}{{\mathbb C}}
\newcommand{\R}{{\mathbb R}}
\newcommand{\Z}{{\mathbb Z}}

\DeclareMathOperator*{\Spec}{Spec}
\newcommand{\DtN}{{\mathcal D}}
\newcommand{\myscal}[1]{\left(#1\right)}
\newcommand{\myscals}[1]{\left[#1\right]}
\newcommand{\mydotp}[1]{\left\langle #1\right\rangle}
\newcommand{\Dom}{\mathrm{Dom}}
\newcommand{\Dir}{\mathrm{Dir}}
\newcommand{\Neu}{\mathrm{Neu}}
\newcommand{\Rob}{\mathrm{Rob}}
\newcommand{\Zar}{\mathrm{Zar}}
\newcommand{\dr}{\mathrm{d}}
\newcommand{\er}{\mathrm{e}}
\newcommand{\ir}{\mathrm{i}}
\newcommand{\sr}{\mathrm{s}}
\newcommand{\ar}{\mathrm{a}}
\renewcommand{\tilde}{\widetilde}
\DeclareSymbolFont{alephletters}{OMS}{cmsy}{m}{n}
\SetSymbolFont{alephletters}{bold}{OMS}{cmsy}{b}{n}
\DeclareMathSymbol{\aleph}{\mathord}{alephletters}{"40}
\newcommand{\oro}{\mathrm{\aleph}}
\numberwithin{equation}{section}
\theoremstyle{plain}
\newtheorem{theorem}{Theorem}[section]
\newtheorem{proposition}[theorem]{Proposition}

\newtheorem{corollary}[theorem]{Corollary}
\newtheorem{conjecture}[theorem]{Conjecture}
\theoremstyle{definition}
\newtheorem{example}[theorem]{Example}

\theoremstyle{remark}
\newtheorem{remark}[theorem]{Remark}
\newcommand{\addQEDstyle}[2]{\AtBeginEnvironment{#1}{\pushQED{\qed}\renewcommand{\qedsymbol}{#2}}\AtEndEnvironment{#1}{\popQED}}
\addQEDstyle{remark}{$\blacktriangleleft$}
\addQEDstyle{example}{$\blacktriangleleft$}
\addQEDstyle{definition}{$\blacktriangleleft$}

\definecolor{darkteal}{rgb}{0.0, 0.5, 0.5}

\newcommand{\mydoi}[1]{\href{https://doi.org/#1}{doi: #1}}
\newcommand{\myarXiv}[1]{\href{https://arxiv.org/abs/#1}{arXiv: #1}}
\title{Spectral properties of the Dirichlet-to-Neumann map for the Helmholtz equation
\footnote{{\bf MSC2020: } 35P05,  35P15, 35J05}%
\footnote{{\bf Keywords: } Dirichlet-to-Neumann map, Steklov problem, Laplacian, eigenvalues, spectral geometry}
}
\date{\small arXiv v. 1; 13 April 2026}
\author{
Denis S. Grebenkov\thanks{\textbf{D. S. G.}: Laboratoire de Physique de la Mati\`ere Condens\'ee, CNRS -- \'Ecole Polytechnique, Institut Polytechnique de Paris, 91120 Palaiseau, France; \href{mailto:denis.grebenkov@polytechnique.edu}{denis.grebenkov@polytechnique.edu}; \url{https://pmc.polytechnique.fr/pagesperso/dg/}; ORCID: 0000-0002-6273-9164%
}
\and 
Michael Levitin\thanks{%
\textbf{M. L.}: Department of Mathematics and Statistics, University of Reading, 
Pepper Lane, Whiteknights, Reading RG6 6AX, UK;
\href{mailto:M.Levitin@reading.ac.uk}{M.Levitin@reading.ac.uk}; \url{https://www.michaellevitin.net}; ORCID: 0000-0003-0020-3265%
}
\and
Iosif Polterovich\thanks{%
\textbf{I. P.}: D\'e\-par\-te\-ment de math\'ematiques et de statistique, Univer\-sit\'e de Mont\-r\'eal, 
CP 6128 succ Centre-Ville, Mont\-r\'eal QC  H3C 3J7, Canada;
\href{mailto:iosif.polterovich@umontreal.ca}{\nolinkurl{iosif.polterovich@umontreal.ca}}; \url{https://www.dms.umontreal.ca/\~iossif}; ORCID: 0009-0007-0052-6589%
}
}
\begin{document}
\maketitle
\begin{abstract}
The study of the Dirichlet-to-Neumann map and the associated Steklov problem for the Laplace equation has been a central topic in spectral geometry over the past decade. In this survey, we consider a more general framework in which the Laplace equation is replaced by the Helmholtz equation.
We examine how the properties of the Dirichlet-to-Neumann eigenvalues and eigenfunctions depend on the parameter in the Helmholtz equation and describe new phenomena arising when this parameter is nonzero, as  opposed to the Laplace case. In particular, we present various eigenvalue inequalities, analyse spectral asymptotics in different regimes, and investigate nodal domains and other features of eigenfunctions.  We also discuss applications where the Helmholtz parameter plays an essential role, as well as challenges encountered in the numerical computation of the Dirichlet-to-Neumann spectrum. 
\end{abstract}
{\small \tableofcontents}
\section{Introduction}\label{sec:intro}
Let  $\Omega \subset \R^d$ be a bounded domain.
Consider the Steklov-type eigenvalue problem for the Helmholtz equation in $\Omega$, 
\begin{equation}\label{eq:SteklovLambda}
\begin{cases}
-\Delta U  = \Lambda U  \qquad&\text{in  }\Omega,\\
\partial_n U = \sigma  U  \qquad&\text{on  }\pa.\\
\end{cases}
\end{equation}
Here $\partial_n U=\mydotp{\nabla U|_{\pa}, n}$ denotes the normal derivative with respect to the exterior normal $n$, $\Lambda \in \mathbb{R}$ is fixed, and $\sigma \in  \mathbb{R}$ is a spectral parameter. 

For  $\Lambda=0$, this is the classical Steklov problem
\begin{equation}\label{eq:Steklov_def}
\begin{cases}
-\Delta U  = 0 \qquad&\text{in  }\Omega,\\
\partial_n U = \sigma  U  \qquad&\text{on  }\pa,\\
\end{cases}
\end{equation}
that has been introduced more then a century ago \cite{Steklov1902} (see \cite{Kuznetsov14} for a historical discussion). Over the past two decades it has attracted significant  attention in spectral geometry and has become one of the central topics in the subject, see \cite{Girouard17}, \cite{Colbois22} as well as \cite[Chapter 7]{Levitin} and references therein.  In two dimensions, the eigenvalue problem \eqref{eq:Steklov_def} describes the free vibration of a membrane with all the weight localised in the boundary. It is therefore closely linked to the Neumann eigenvalue problem, see \cite{Girouard10b}, \cite{GirHL21} for various related results.

The goal of the present survey is to emphasise the new phenomena and applications  arising in the case $\Lambda \neq 0$.  Recently, there has been a lot of research on the problem \eqref{eq:SteklovLambda} in physics literature  in relation to modelling diffusion processes near a reflecting or partially absorbing boundary, see 
\S \ref{sec:reactions}. In this context, $\Lambda=0$  corresponds to the simplest model of particles with infinite lifetime and no reproduction. The case $\Lambda<0$  corresponds to particles that may disappear inside the domain during diffusion due to, e.g., biological death, radioactive decay or chemical disassembly.  At the same time, the case $\Lambda>0$ corresponds to diffusion with reproduction.  
As we will see, the properties of the eigenvalues and eigenfunctions of \eqref{eq:SteklovLambda} differ significantly depending on the sign of  the parameter 
$\Lambda$. 

The eigenvalue problem \eqref{eq:SteklovLambda} can be also viewed as a spectral problem for the Dirichlet-to-Neumann map
 $\DtN_\Lambda$.  Assuming that $\Lambda$ is not a Dirichlet eigenvalue of $\Omega$, the operator $\DtN_\Lambda$ associates to a
function $f$ on the boundary $\pa$ another function $g=\DtN_\Lambda f$ on $\pa$, which is he normal derivative of the unique solution $U$ of the corresponding 
Helmholtz equation with $U|_{\pa} = f$,  see  \S\ref{subsec:DtNmap} for a formal definition. 
In particular, the eigenvalues of \eqref{eq:SteklovLambda} are precisely the eigenvalues of the Dirichlet-to-Neumann map $\DtN_\Lambda$,
and the eigenfunctions  of $\DtN_\Lambda$ are the restrictions of the eigenfunctions of \eqref{eq:SteklovLambda} to the boundary.
The Dirichlet-to-Neumann map has numerous applications, in particular, for $\Lambda=0$ it can be viewed as the voltage-to-current map appearing in the celebrated Calder\'on's problem (see \cite{Calderon80}, \cite{Sylvester87}). In this case $f$ is viewed as a charge distribution on the boundary $\pa$ and $\DtN_0 f$ determines the electric current on the boundary.  In diffusion problems,  if $f$ represents the imposed concentration of particles on the boundary, $\DtN_\Lambda f$ determines their flux density.

Another important area of applications of the Steklov-type problems is the theory of linear water waves.  It has a long history, going back to the late nineteenth  century \cite{Greenhill}, \cite{Poincare}.  The small oscillations of a fluid in a container are described by the sloshing problem, which is a mixed Steklov--Neumann problem, see \S \ref{sec:mixed}. While  the  two-dimensional sloshing is described using the Laplace equation in the interior, in  three dimensions  the  Helmholtz equation  with a negative parameter arises after the separation of one spatial variable, see e.g. \cite{Peters52}, \cite{Mayrand20}. 

The Dirichlet-to-Neumann operators are closely related to various scattering and diffraction phenomena, 
see \cite{CakCol}, \cite{CakStek}, \cite{Behrndt17}, \cite{AgranovichBook}, and references therein, and to
fractional Laplacians \cite{Caffarelli07}, \cite{Arendt18}. 

We note that the eigenvalue problem \eqref{eq:SteklovLambda} can be formulated  on manifolds with boundary, and many results have been obtained in this context, see, for example, \cite{Girouard22}. To keep things simple, in this survey we stick to the case when $\Omega$ is a bounded Euclidean domain, as most of the spectral properties we are interested  in  are manifested in this geometrically easier setting.

The survey is organised as follows.
In \S\ref{subsec:prelim} and \S\ref{subsec:DtNmap} we present the necessary background notions and introduce the Dirichlet-to-Neumann map. The variational characterisation of the Dirichlet-to-Neumann eigenvalues is discussed in \S\ref{sec:varpr}. In particular, we emphasise the subtleties arising in the case 
$\Lambda>0$, see Remark \ref{rem:spaces}. The Robin--Dirichlet-to-Neumann duality  and some related questions concerning the iospectrality problem for the Dirichlet-to-Neumann maps are considered in \S \ref{subsec:duality}. 

Eigenvalues and eigenfunctions of the Dirichlet-to-Neumann operator  
can be calculated explicitly or expressed in terms of special functions only for a few classes of Euclidean domains such as intervals, balls, spherical shells, and cuboids. These computations
rely on symmetries of domains and separation of variables in the Helmholtz equation. Several examples are presented in  \S\ref{sec:examples}.

In \S \ref{sec:spectralproperties}  we focus on  the spectral properties of the operators $\DtN_\Lambda$. In particular, in \S \ref{sec:DtNDNL} we  give a formula expressing  the Dirichlet-to-Neumann map for  {\it any} value  of the parameter  in terms of  the Dirichlet-to-Neumann map for some {\em fixed} value of the parameter and  the Dirichlet spectral data, see Theorem \ref{thm:Dmatrix}. This result is used  to prove that that the union of the Dirichlet-to-Neumann spectra over all real values of the Helmholtz parameter can be represented  as a family of real-analytic curves, see Theorem \ref{thm:analyticity}. This, in turn, allows us to answer an open question  \cite[Open problem 4.11]{Bucur17}, see Theorem \ref{thm:Robincurves}. In \S\ref{sec:iso} and \S\ref{sec:otherestimates} we also discuss various inequalities for the Dirichlet-to-Neumann eigenvalues and conjecture a bound on the Dirichlet-to-Neumann eigenvalues varying along a fixed analytic eigenvalue branch for $\Lambda\le 0$, see Conjecture \ref{conj:sqroot}.

In \S\ref{sec:asympt} we consider eigenvalue asymptotics in different regimes. In particular, we study the behaviour of the principal eigenvalue as $\Lambda \to 0$, as well as the asymptotics of the spectrum as $\Lambda$ approaches a Dirichlet eigenvalue or tends to $-\infty$. We also discuss  Conjecture \ref{conj:polygons} concerning the asymptotics as $\Lambda\to-\infty$ of the spectrum of the Dirichlet-to-Neumann map on polygons. The Weyl asymptotics as $\sigma \to \infty$ under different regularity assumptions on the boundary is discussed in \S\ref{sec:weyl}.
  
The properties of the  eigenfunctions are discussed in \S\ref{sec:nodal}.  We present Courant's nodal domain theorem  for the interior (bulk) eigenfunctions and discuss the challenges arising in the nodal domain count for the boundary eigenfunctions.  We also explore the strict positivity of the first eigenfunction, as well as the  localisation properties of eigenfunctions near the boundary.
  
In \S\ref{sec:representation} we discuss layer potentials and Green's functions under different boundary conditions and their links to Dirichlet-to-Neumann maps.

Several extensions of the Steklov problem for the Helmholtz equation are considered in \S\ref{sec:extensions}.  In \S\ref{sec:mixed}  we discuss the mixed Steklov--Dirichlet and Steklov--Neumann problems, the latter being of particular importance in view of the sloshing problem that was mentioned earlier.
We also briefly discuss the Dirichlet-to-Neumann operator on exterior domains  as well as the case of a  complex parameter 
$\Lambda$.

In \S \ref{sec:applications} we present various numerical methods that are used to compute the Dirichlet-to-Neumann eigenvalues, such as finite elements, boundary elements and the method of fundamental solutions.  We also discuss domain decomposition techniques.  Finally, \S \ref{sec:reactions} is concerned with applications to diffusion-controlled systems, which provides an important physical motivation  for the subject  of this survey. The paper is concluded by two short appendices containing auxiliary results.

\section{Setting of the problem}\label{sec:basic}
\subsection{Preliminaries}\label{subsec:prelim}
Let $\Omega \subset \R^d$ be a bounded Euclidean domain  (i.e., a connected open set).   
Consider the eigenvalue problems for the Laplace operator with the Dirichlet, Neumann and Robin boundary conditions on the boundary $\pa$:
\begin{equation}\label{eq:Laplacian}
\begin{gathered}
\text{Dirichlet}\\
\left\{
\begin{aligned}
 -\Delta U  &= \lambda U,\\ 
 U|_{\pa}  &= 0,
\end{aligned} 
\right.
\end{gathered}
\qquad\qquad\qquad
\begin{gathered}
\text{Neumann}\\
\left\{
\begin{aligned}
-\Delta U  &= \lambda U,\\ 
\partial_n U  &= 0,
\end{aligned} 
\right. 
\end{gathered}
\qquad\qquad\qquad
\begin{gathered}
\text{Robin}\\
\left\{
\begin{aligned}
-\Delta U &= \lambda U,\\ 
\partial_n U+\gamma U|_{\pa}  &= 0.
\end{aligned} 
\right.
\end{gathered}
\end{equation}
Here, as before,  $\partial_n$ is the normal derivative at the boundary, and $\gamma\in\R$ is a fixed parameter. Note that for $\gamma=0$ the Robin problem coincides with the Neumann one. 

We denote by $-\Delta_\Omega^\Dir$, $-\Delta_\Omega^\Neu$, and $-\Delta_\Omega^{\Rob,\gamma}$ the Dirichlet, Neumann and Robin Laplacians in $\Omega$, according to their boundary conditions in \eqref{eq:Laplacian}. 
While the Dirichlet Laplacian on a bounded domain always has discrete spectrum, one needs to impose some regularity conditions in order for this to hold for the Neumann and Robin Laplacians. In what follows we assume that $\Omega$  has Lipschitz boundary (we say that $\Omega$ is Lipschitz for short), see e.g. \cite[Appendix B.3]{Levitin} and references therein for a formal definition. Essentially, it means that the boundary  $\partial \Omega$ can be locally represented as a graph of a Lipschitz function. In particular, all domains with $C^1$ boundary and  convex domains are Lipschitz in any dimension,  all polygons are Lipschitz, but domains with cusps and slits are not, see Figure  \ref{fig:nonLipschitz}. 
\begin{figure}[!htbp]
\centering
\includegraphics{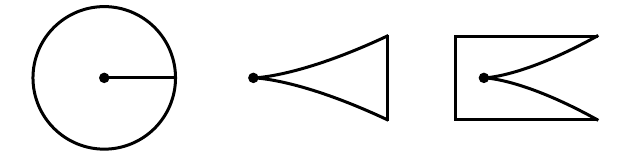}
\caption{
Examples of non-Lipschitz domains, in which the Lipschitz condition is not satisfied at the point shown. From left to right: a disk with a cut; a planar domain with an external cusp; a planar domain with an internal cusp. 
\label{fig:nonLipschitz}}
\end{figure}
For bounded Lipschitz domains, or, more generally, for bounded open sets (not necessarily connected) with Lipschitz boundaries, the Neumann and Robin Laplacian have discrete spectra;  note that these assumptions are sufficient but not necessary.   

We denote the eigenvalues of the problems \eqref{eq:Laplacian} by $\lambda_k^\Dir$, $\lambda_k^\Neu$, and $\lambda_k^{\Rob,\gamma}$, respectively, enumerating them by $k = 1,2,\ldots$, in an increasing order, with the account of multiplicities.   They  are
positive in the Dirichlet case and in the Robin case with $\gamma>0$, and non-negative in the Neumann case (for which $\lambda_1^\Neu=0$). 
In some cases, we will specify explicitly
the domain $\Omega$ by writing, e.g., $\lambda_k^\Dir(\Omega)$ instead of
$\lambda_k^\Dir$.  We also denote by $\Spec(A)$ the spectrum of an
operator $A$, again understood as a multiset with account of multiplicities.

\subsection{The Dirichlet-to-Neumann map}\label{subsec:DtNmap}

Let, as above, $\Omega \subset \R^d$, $d \geq 2$,  be a bounded Lipschitz domain.
In what follows, we use the theory of Sobolev spaces $H^s(\Omega)$, $H_0^s(\Omega)$, and $H^s(\pa)$, $s\in\mathbb{R}$,  on Lipschitz domains and their boundaries.
There is an extensive literature on this subject, for example \cite{McLean}, \cite{Grisvard}, \cite{Adams}.  We particularly recommend a concise exposition in \cite[Appendix]{Chandler12} which is sufficient for our purposes.
For a given Hilbert space $X$ (which may be a Sobolev or a Lebesgue space), we will denote by $\myscal{\cdot,\cdot}_X$ and $\|\cdot\|_X$ the inner product and the norm in $X$.

Fix a parameter $\Lambda\in \C\setminus \Spec\left(-\Delta_\Omega^\Dir\right)$, and consider a non-homogeneous boundary value problem for the Helmholtz equation
\begin{equation}\label{eq:HelmholtzML}
\left\{
\begin{aligned}
-\Delta U &= \Lambda U\qquad \text{in }\Omega,\\
U|_{\pa} &= u,
\end{aligned}
\right.
\end{equation}
where $u\in H^{1/2}(\pa)$ is given. The problem has a unique solution $U\in H^1(\Omega)$, which we will call the \emph{$\Lambda$-harmonic extension} of $u$ (by analogy with the ordinary harmonic extension in the case $\Lambda=0$). We will denote this solution by
$U=\mathcal{E}_\Lambda u\in \mathcal{H}_\Lambda(\Omega)\subset H^1(\Omega)$, where
\[
\mathcal{H}_\Lambda(\Omega)=\left\{U\in H^1(\Omega): -\Delta U-\Lambda U=0\right\}=\left\{\mathcal{E}_\Lambda u: u\in H^{1/2}(\pa)\right\}
\]
is the space of all $\Lambda$-harmonic functions on $\Omega$, see also Proposition \ref{prop:Elambda}. 

We now define the \emph{Dirichlet-to-Neumann operator} (or \emph{map}) $\DtN_\Lambda: H^{1/2}(\pa)\to H^{-1/2}(\pa)$ as
\[
\DtN_\Lambda: u\mapsto  \partial_n \left(\mathcal{E}_\Lambda u\right).
\]
In other words, the linear operator $\DtN_\Lambda$ maps the Dirichlet datum $u=U|_{\pa}$ of a $\Lambda$-harmonic function $U$ into its Neumann datum $\partial_n U = \DtN_\Lambda u$. We refer to \cite{Chandler12}  for the precise definition of the normal derivative $\partial_n: U\mapsto \partial_n U$ as an operator $H^1(\Omega)\to H^{-1/2}(\pa)$.  
An alternative definition of the Dirichlet-to-Neumann map in
terms of boundary triplets is discussed in \cite{BHdS}, see also
\cite{Safarov2008} for an abstract Hilbert space analogue.  
Further extensions include Dirichlet-to-Neumann maps associated to
second-order elliptic operators
\cite{Auchmuty12}, \cite{Auchmuty13}, \cite{Agranovich13}, \cite{Arendt14} or dealing with
other types of boundary conditions
\cite{Shamma72}, \cite{Shamma75}, \cite{Gesztesy08}, \cite{Kennedy08}, \cite{Kennedy10}, \cite{Dambrine16}.  

We have in fact defined a family of operators parametrised by
$\Lambda\in\mathbb{C}\setminus
\Spec\left(-\Delta_\Omega^\Dir\right)$. For the purposes of this
paper, we will be mostly working with real values of $\Lambda$, except in \S\ref{sec:complex}. The
particular case $\Lambda = 0$ corresponds to the operator $\DtN_0$
arising in the original Steklov problem \eqref{eq:Steklov_def}.

We briefly discuss the extension of the definition of the Dirichlet-to-Neumann operator $\DtN_\Lambda$ to the case $\Lambda\in\Spec\left(-\Delta_\Omega^\Dir\right)$. Let 
\[
\mathcal{H}_{\Lambda,0}=\mathcal{H}_\Lambda(\Omega) \cap H^1_0(\Omega)
\]
be the finite-dimensional eigenspace of the Dirichlet Laplacian corresponding to the eigenvalue $\Lambda$. Set
\[
\mathcal{K}_{\Lambda}=\left\{\partial_n U^\Dir: U^\Dir\in\mathcal{H}_{\Lambda,0}\right\}
\]
to be the finite-dimensional linear space of the Neumann traces of the eigenfunctions, and
\[
\mathcal{K}^\perp_{\Lambda}=\left\{ u\in H^{1/2}(\pa): \myscal{u, \partial_n U^\Dir}_{L^2(\pa)}=0\text{ for all }U^\Dir\in\mathcal{H}_{\Lambda,0}\right\}
\]
to be its $L^2(\pa)$-orthogonal complement. Then the non-homogeneous problem \eqref{eq:HelmholtzML} is solvable only if  $u \in\mathcal{K}^\perp_{\Lambda}$ \cite[Theorem 4.10]{McLean}. The solution $U$ in this case is not unique but is defined modulo an addition of  an element from $\mathcal{H}_{\Lambda,0}$. If we treat $\mathcal{E}_\Lambda u$ as a multi-valued map,  and denote by  $\Pi_{\mathcal{K}_{\Lambda}^\perp}$ the projection onto the subspace $\mathcal{K}_{\Lambda}^\perp$, then we can define the operator $u\mapsto \Pi_{\mathcal{K}_{\Lambda}^\perp}\partial_n \mathcal{E}_\Lambda u$ with the domain $H^1(\pa)\cap \mathcal{K}^\perp_{\Lambda}$  as the \emph{Dirichlet-to-Neumann map} for $\Lambda\in\Spec\left(-\Delta_\Omega^\Dir\right)$,  see \cite{Arendt12}, \cite{Arendt14}, \cite{Behrndt15}, \cite{Berkolaiko22} for further details. We note that the additional regularity is required in that definition because we need to ensure that the projector is well-defined. We also used the fact that eigenfunctions of the Dirichlet Laplacian in a Lipschitz  domain $\Omega$ belong to $H^{3/2}(\Omega)$ \cite{JerKen}.

For a bounded domain $\Omega$ with a Lipschitz boundary $\pa$, and for a fixed $\Lambda\in\mathbb{R}$, the operator $\DtN_\Lambda$ is
self-adjoint, semi-bounded below, and has a compact resolvent
\cite{Arendt07}. Additionally, when the boundary $\pa$ is smooth, for any $\Lambda \in
\R \backslash \Spec\left(-\Delta_\Omega^\Dir\right)$, $\DtN_\Lambda$ is an elliptic pseudodifferential
operator of order one, see \cite[Chapter 7.11]{Taylor3}. In
particular, it is a nonlocal operator, in sharp contrast with the
Laplace operator.  For smooth domains, the 
Dirichlet-to-Neumann operator $\DtN_\Lambda$ essentially behaves as
$\left(-\Delta_{\pa}\right)^{1/2}$, up to a pseudodifferential
operator of order zero, where $\Delta_{\pa}$ is the Laplace--Beltrami
operator on the boundary $\pa$. The link between the DtN map and the boundary Laplacian exists also for less regular boundaries, see \cite{Girouard22}.
For construction of the Dirichlet-to-Neumann operators and their
spectral properties for some classes of non-Lipschitz domains, see, e.g.,
\cite{Arendt11}, \cite{Nazarov08}, \cite{Nazarov09}. 

We note that the eigenfunctions of the Dirichlet-to-Neumann operator for a bounded domain $\Omega$ with Lipschitz boundary in fact belong to $H^1(\pa)$ \cite{MitTay}. This explains why $\DtN_\Lambda$ is sometimes considered as an operator on $L^2(\pa)$ with the domain $H^1(\pa)$. Moreover, provided the boundary is smooth,  it can be considered as an operator acting from $C^\infty(\pa)$ to $C^\infty(\pa)$  which does not affect its spectral properties. 

The standard integration by parts formula yields the bilinear form of $\DtN_\Lambda$,
\begin{equation}\label{eq:bilform}
\myscal{\DtN_\Lambda u,v}_{L^2(\pa)}=\myscal{\nabla U,\nabla V}_{L^2(\Omega)} - \Lambda\myscal{U,V}_{L^2(\Omega)},
\end{equation}
valid for all 
\[
u, v\in\Dom(\DtN_\Lambda)=\begin{cases}
H^{1/2}(\pa)&\quad\text{if }\Lambda\not\in\Spec\left(-\Delta_\Omega^\Dir\right),\\
H^1(\pa)\cap  \mathcal{K}^\perp_{\Lambda}&\quad\text{if }\Lambda\in\Spec\left(-\Delta_\Omega^\Dir\right),
\end{cases}
\]
with $U=\mathcal{E}_\Lambda u$ and $V=\mathcal{E}_\Lambda v$. The right-hand side of \eqref{eq:bilform} makes sense for all $U,V\in H^1(\Omega)$, and we denote it by 
\begin{equation} \label{eq:Lambdaprod}
\myscals{U,V}_\Lambda := \myscal{\nabla U, \nabla V}_{L^2(\Omega)}-\Lambda\myscal{U,V}_{L^2(\Omega)}, \qquad U,V\in H^1(\Omega).
\end{equation}

\begin{remark} 
If $\Lambda<0$, then \eqref{eq:Lambdaprod} defines an inner product on $H^1(\Omega)$ inducing the norm $\sqrt{\myscals{U,U}_\Lambda}$, which is equivalent to the standard norm $\|U\|_{H^1(\Omega)}=\sqrt{\myscals{U,U}_{-1}}$.
\end{remark}

The bilinear form \eqref{eq:bilform} induces the quadratic form of $\DtN_\Lambda$, 
\begin{equation}\label{eq:quadform}
\myscal{\DtN_\Lambda u,u}_{L^2(\pa)}=\|\nabla U\|^2_{L^2(\Omega)} - \Lambda\|U\|^2_{L^2(\Omega)}=[U,U]_\Lambda, 
\end{equation}
for all $u\in\Dom(\DtN_\Lambda)$, with $U=\mathcal{E}_\Lambda u$.

For future use, we also present an explicit expression for the $\Lambda$-harmonic extension in terms of the spectral data of the Dirichlet Laplacian. 

\begin{proposition}\label{prop:Elambda} 
We have, for $\Lambda\not\in\Spec\left(-\Delta^\Dir\right)$ and $u\in H^{1/2}(\pa)$,

\[
\mathcal{E}_\Lambda u = \sum_{m=1}^\infty \frac{\myscal{u,\partial_n U^\Dir_m}_{L^2(\pa)}}{\Lambda-\lambda_m^\Dir} U^\Dir_m,
\]
where the equality is understood in the $L^2(\Omega)$-sense. Moreover this also works for $\Lambda\in\Spec\left(-\Delta^\Dir\right)$ as long as $u\in\mathcal{K}^\perp_{\Lambda}$, and we assume the convention $\frac{0}{0}:=0$.
\end{proposition}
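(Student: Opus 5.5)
Let $\{U^\Dir_m\}_{m\ge1}$ be an $L^2(\Omega)$-orthonormal basis of real Dirichlet eigenfunctions, $-\Delta U^\Dir_m=\lambda_m^\Dir U^\Dir_m$, $U^\Dir_m\in H^1_0(\Omega)$. The plan is to expand $U=\mathcal{E}_\Lambda u\in H^1(\Omega)\subset L^2(\Omega)$ in this basis, $U=\sum_m c_m U^\Dir_m$ with $c_m=\myscal{U,U^\Dir_m}_{L^2(\Omega)}$, and to identify each $c_m$ through Green's second identity. By completeness of the basis, the series then converges to $U$ in $L^2(\Omega)$, which is exactly the claimed sense of equality.

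The key step is to compute $c_m$. Both $U$ and $U^\Dir_m$ solve Helmholtz-type equations, so formally
\[
\myscal{\Delta U,U^\Dir_m}_{L^2(\Omega)}-\myscal{U,\Delta U^\Dir_m}_{L^2(\Omega)}=\myscal{\partial_n U,U^\Dir_m}_{L^2(\pa)}-\myscal{U,\partial_n U^\Dir_m}_{L^2(\pa)}.
\]
Since $U^\Dir_m|_{\pa}=0$ and $U|_{\pa}=u$, the right-hand side equals $-\myscal{u,\partial_n U^\Dir_m}_{L^2(\pa)}$. The left-hand side equals $(-\Lambda+\lambda_m^\Dir)c_m$. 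Hence
\[
c_m=\frac{\myscal{u,\partial_n U^\Dir_m}_{L^2(\pa)}}{\Lambda-\lambda_m^\Dir}.
\]

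The main obstacle is justifying Green's identity at low regularity: $U$ is only in $H^1(\Omega)$, with $\partial_n U\in H^{-1/2}(\pa)$, and $U^\Dir_m\in H^{3/2}(\Omega)$ by \cite{JerKen}. I would avoid using $\partial_n U$ altogether.

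\begin{itemize}
\item Use the weak definition of the normal derivative of $U^\Dir_m$, with $\Delta U^\Dir_m\in L^2(\Omega)$. This gives, for any $V\in H^1(\Omega)$,
\[
\myscal{\nabla V,\nabla U^\Dir_m}_{L^2(\Omega)}+\myscal{V,\Delta U^\Dir_m}_{L^2(\Omega)}=\myscal{V|_{\pa},\partial_n U^\Dir_m}_{L^2(\pa)}.
\]
The pairing is genuinely $L^2(\pa)$, since $\partial_n U^\Dir_m\in L^2(\pa)$ for $H^{3/2}$ functions with $L^2$ Laplacian on Lipschitz domains.
\item Take $V=U$. Since $U$ is weakly $\Lambda$-harmonic and $U^\Dir_m\in H^1_0(\Omega)$, we get $\myscal{\nabla U,\nabla U^\Dir_m}=\Lambda c_m$.
\item Substituting gives $\Lambda c_m-\lambda_m^\Dir c_m=\myscal{u,\partial_n U^\Dir_m}_{L^2(\pa)}$, as required.
\end{itemize}

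For $\Lambda=\lambda_k^\Dir\in\Spec$ and $u\in\mathcal{K}^\perp_\Lambda$, the same computation applies to any solution $U$. For indices with $\lambda_m^\Dir=\Lambda$ it gives $0=\myscal{u,\partial_n U^\Dir_m}_{L^2(\pa)}$, which is consistent with the hypothesis. The coefficients $c_m$ for these indices are undetermined, reflecting the non-uniqueness modulo $\mathcal{H}_{\Lambda,0}$. The convention $\frac00:=0$ selects the representative orthogonal to $\mathcal{H}_{\Lambda,0}$, and the formula holds for that choice of the multi-valued $\mathcal{E}_\Lambda u$.
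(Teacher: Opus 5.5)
Your proposal is correct and follows the paper's proof. Both expand $\mathcal{E}_\Lambda u$ in the Dirichlet eigenbasis and identify the coefficients by Green's identity, obtaining $(\Lambda-\lambda_m^\Dir)\,c_m=\myscal{u,\partial_n U^\Dir_m}_{L^2(\pa)}$. Your use of Green's first identity for $U^\Dir_m$ alone, combined with the weak $\Lambda$-harmonicity of $U$ tested against $U^\Dir_m\in H^1_0(\Omega)$, rigorously justifies the ``integration by parts'' that the paper performs formally. Your reading of the convention $\frac{0}{0}:=0$ as selecting the solution $L^2(\Omega)$-orthogonal to $\mathcal{H}_{\Lambda,0}$ is the right one.
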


\begin{proof} As $U=\mathcal{E}_\Lambda u\in H^1(\Omega)$, it is also in $L^2(\Omega)$ and can be expanded in the basis of Dirichlet eigenfunctions,
\[
U=\sum_{m=1}^\infty\myscal{U,  U^\Dir_m}_{L^2(\Omega)} U^\Dir_m.
\]
Integration by parts yields
\[
\begin{split}
(\Lambda-\lambda_m^\Dir) \myscal{U,  U^\Dir_m}_{L^2(\Omega)}  &= \myscal{-\Delta U,  U^\Dir_m}_{L^2(\Omega)} - \myscal{U,  -\Delta U^\Dir_m}_{L^2(\Omega)}\\
&= \myscal{u, \partial_n U^\Dir_m}_{L^2(\pa)},
\end{split}
\]
and the result follows immediately. 
\end{proof}

\begin{remark} 
It is important to emphasise that the equality in Proposition \ref{prop:Elambda} is only valid in the  $L^2(\Omega)$ sense. 
\end{remark}

\subsection{Eigenvalues and the variational principle}\label{sec:varpr}

Let $\Omega\subset\mathbb{R}^d$ be a bounded Lipschitz domain, and  let us fix $\Lambda\in\mathbb{R}\setminus\Spec\left(-\Delta_\Omega^\Dir\right)$. The spectral problem for the Dirichlet-to-Neumann map $\DtN_\Lambda$,
\[
\DtN_\Lambda u = \sigma u,
\]
coincides with \eqref{eq:SteklovLambda}. 

The properties of the operator $\DtN_\Lambda$ described in the previous section imply that $\DtN_\lambda$ has a discrete  real spectrum
\cite{Arendt12}, \cite{Behrndt15}, i.e. there exists a sequence of eigenvalues 
\[
\sigma_1^{(\Lambda)} \leq \sigma_2^{(\Lambda)} \leq \ldots \nearrow +\infty, 
\]
ordered with multiplicities and accumulating to $+\infty$, 
and a corresponding sequence of  eigenfunctions $u_k^{(\Lambda)}\in\Dom(\DtN_\Lambda)\setminus\{0\}$ such that
\[
\DtN_\Lambda \, u_k^{(\Lambda)} = \sigma_k^{(\Lambda)} \, u_k^{(\Lambda)},  \qquad k \in\mathbb{N},
\]
or, more explicitly,  
with $U_k^{(\Lambda)}:=\mathcal{E}_\Lambda u_k^{(\Lambda)}$, 
\begin{equation}\label{eq:efexplicit}
\begin{cases}
-\Delta U_k^{(\Lambda)} = \Lambda U_k^{(\Lambda)} \qquad&\text{in  }\Omega,\\
\partial_n U_k^{(\Lambda)} = \sigma_k^{(\Lambda)} U_k^{(\Lambda)} \qquad&\text{on  }\pa.\\
\end{cases}
\end{equation}

\begin{remark}
The definition of the Dirichlet-to-Neumann map and its spectral properties remain unchanged if we consider a bounded open set $\Omega\subset \mathbb{R}^d$ with Lipschitz boundary rather than a bounded domain, thus not assuming connectedness of $\Omega$. At the same time, if $\Omega$ is a disjoint union of $m$ bounded Lipschitz domains, $\Omega=\bigsqcup_{j=1}^m \Omega_j$, then the spectrum of the Dirichlet-to-Neumann map on $\Omega$ is the union of its spectra on each of $\Omega_j$. Thus, without loss of generality, we can assume from now on that $\Omega$ is connected. 
\end{remark}

When $\Lambda=0$, the eigenvalues $\sigma_k^{(0)}$ exhibit a natural scaling upon
dilations of the bounded domain $\Omega$ by a factor $\alpha > 0$, i.e.
\begin{equation}\label{eq:scaling0}
\sigma_k^{(0)}(\alpha \Omega) = \frac{1}{\alpha} \sigma_k^{(0)}(\Omega).
\end{equation}
In physical terms, it is consistent with the fact that the eigenvalues
$\sigma_k^{(0)}$ have units length$^{-1}$.  In turn, if $\Lambda \ne 0$, a
dilation of $\Omega$ also affects the Laplace operator and thus
requires rescaling of $\Lambda$,
\begin{equation}\label{eq:scaling}
\sigma_k^{(\Lambda)}(\alpha \Omega) = \frac{1}{\alpha} \sigma_k^{(\Lambda \alpha^2)}(\Omega).
\end{equation}
This property will be important for the discussion of isoperimetric inequalities in \S\ref{sec:iso}.

As is true for any self-adjoint semi-bounded below operator with a discrete spectrum, the eigenvalues $\sigma_k^{(\Lambda)}$ satisfy the variational (minimax) relation
\begin{equation}\label{eq:abs_minimax}
\sigma_k^{(\Lambda)} =  \min_{\substack{\widetilde{\mathcal{L}}\subset H^{1/2}(\pa) \\ \dim \widetilde{\mathcal{L}} = k}}\  \max_{u \in\widetilde{\mathcal{L}}\setminus\{0\}} \frac{\myscal{\DtN_\Lambda u,u}_{L^2(\pa)}}{\|u\|^2_{L^2(\pa)}}, \qquad k\in\mathbb{N}.
\end{equation}
Since we can uniquely identify a function $u\in H^{1/2}(\pa)$ with its $\Lambda$-harmonic extension $U=\mathcal{E}_\Lambda u\in \mathcal{H}_\Lambda(\Omega)$, we can then use \eqref{eq:quadform} in order to rewrite \eqref{eq:abs_minimax} as
\begin{equation} \label{eq:muk_minimax}
\sigma_k^{(\Lambda)} = \min_{\substack{\mathcal{L} \subset \mathcal{H}_\Lambda(\Omega)\\\dim\mathcal{L} = k}}\  \max_{U \in \mathcal{L}\setminus\{0\}} 
\frac{\|\nabla U\|^2_{L^2(\Omega)} - \Lambda \| U\|^2_{L^2(\Omega)}}{\|U|_{\pa}\|^2_{L^2(\pa)}},  \qquad k\in\mathbb{N},
\end{equation}
and in particular 
\begin{equation} \label{eq:muk_minimaxk1}
\sigma_1^{(\Lambda)} =\min_{U \in  \mathcal{H}_\Lambda(\Omega)\setminus\{0\}} 
\frac{\|\nabla U\|^2_{L^2(\Omega)} - \Lambda \| U\|^2_{L^2(\Omega)}}{\|U|_{\pa}\|^2_{L^2(\pa)}},  \qquad k\in\mathbb{N}.
\end{equation}

Moreover, if $\Lambda$ is below the first Dirichlet eigenvalue of $\Omega$, that is, $\Lambda < \lambda_1^\Dir(\Omega)$, then the space of $\Lambda$-harmonic functions $\mathcal{H}_\Lambda(\Omega)$ in
\eqref{eq:muk_minimax} can be replaced by a (larger) Sobolev space $H^1(\Omega)$ to yield a simpler form
\begin{equation} \label{eq:muk_minimax1}
\sigma_k^{(\Lambda)} = \min_{\substack{\widetilde{\mathcal{L}} \subset H^1(\Omega)\\\dim\widetilde{\mathcal{L}} = k}}\  \max_{W \in \widetilde{\mathcal{L}}\setminus\{0\}} 
\frac{\|\nabla W\|^2_{L^2(\Omega)} - \Lambda \| W\|^2_{L^2(\Omega)}}{\|W|_{\pa}\|^2_{L^2(\pa)}},  \qquad k\in\mathbb{N}. 
\end{equation}
In order to prove \eqref{eq:muk_minimax1}, we  require the following useful decomposition result.

\begin{proposition} 
For any $\Lambda\in\mathbb{R}\setminus\Spec\left(-\Delta^\Dir_\Omega\right)$, the Sobolev space $H^1(\Omega)$ can be decomposed into the direct sum
\[
H^1(\Omega)=\mathcal{H}_\Lambda(\Omega) \oplus H^1_0(\Omega).
\]
Moreover,
\[
\myscals{U,V}_\Lambda=0\qquad\text{for any }U\in\mathcal{H}_\Lambda(\Omega), V\in H^1_0(\Omega).
\]
\end{proposition}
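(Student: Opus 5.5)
The plan is to split an arbitrary $W\in H^1(\Omega)$ using its boundary trace, and to get the orthogonality from the weak form of the Helmholtz equation. For the decomposition, take $W\in H^1(\Omega)$ and set $w=W|_{\pa}\in H^{1/2}(\pa)$, which is well defined by the trace theorem on Lipschitz domains. Since $\Lambda\notin\Spec\left(-\Delta^\Dir_\Omega\right)$, the $\Lambda$-harmonic extension $U=\mathcal{E}_\Lambda w\in\mathcal{H}_\Lambda(\Omega)$ exists and is unique. Put $V=W-U$; then $V\in H^1(\Omega)$ and $V|_{\pa}=w-w=0$. Hence $V\in H^1_0(\Omega)$, using that the kernel of the trace map on a Lipschitz domain is exactly $H^1_0(\Omega)$. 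This gives $H^1(\Omega)=\mathcal{H}_\Lambda(\Omega)+H^1_0(\Omega)$.

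To see the sum is direct, I would show $\mathcal{H}_\Lambda(\Omega)\cap H^1_0(\Omega)=\{0\}$. An element of the intersection is a weak solution of $-\Delta U=\Lambda U$ with zero trace, hence either zero or a Dirichlet eigenfunction with eigenvalue $\Lambda$. The second option is excluded by the hypothesis $\Lambda\notin\Spec\left(-\Delta^\Dir_\Omega\right)$; equivalently, the space $\mathcal{H}_{\Lambda,0}$ introduced earlier is trivial. It may also be worth noting that the projection $W\mapsto \mathcal{E}_\Lambda(W|_{\pa})$ is bounded, since it is a composition of the trace map with the bounded extension operator. So the sum is topologically direct, although this is not strictly required.

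For the orthogonality, let $U\in\mathcal{H}_\Lambda(\Omega)$ and $V\in H^1_0(\Omega)$. By definition of $\mathcal{H}_\Lambda(\Omega)$, $U$ satisfies $-\Delta U-\Lambda U=0$ in the distributional sense, so $\myscal{\nabla U,\nabla \varphi}_{L^2(\Omega)}-\Lambda\myscal{U,\varphi}_{L^2(\Omega)}=0$ for all $\varphi\in C_c^\infty(\Omega)$. Both terms are continuous in $\varphi$ with respect to the $H^1$ norm, because $U\in H^1(\Omega)$. Since $C_c^\infty(\Omega)$ is dense in $H^1_0(\Omega)$, the identity extends to $\varphi=V$, which is exactly $\myscals{U,V}_\Lambda=0$. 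Alternatively, one can apply the Green formula \eqref{eq:bilform} with the boundary term $\myscal{\partial_n U, V|_{\pa}}$, which vanishes because $V|_{\pa}=0$.

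I do not expect a serious obstacle. The only points needing care are the functional-analytic facts on Lipschitz domains: the trace theorem, the characterisation of $H^1_0(\Omega)$ as the kernel of the trace, and the unique solvability of \eqref{eq:HelmholtzML}. All of these are standard and were already invoked when $\mathcal{E}_\Lambda$ was defined. Note that the decomposition is not $L^2$- or $H^1$-orthogonal in general; it is orthogonal only with respect to the possibly indefinite form $\myscals{\cdot,\cdot}_\Lambda$, and the density argument above handles this without needing positivity.
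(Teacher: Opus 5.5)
Your proof is correct and follows the paper's route: write $W=\mathcal{E}_\Lambda(W|_{\pa})+V$ with $V\in H^1_0(\Omega)$, then obtain $\myscals{U,V}_\Lambda=0$ by integrating by parts against the zero-trace function $V$. Your check that $\mathcal{H}_\Lambda(\Omega)\cap H^1_0(\Omega)=\{0\}$ and your density justification of the integration by parts fill in details the paper leaves implicit; the one slip is the aside citing \eqref{eq:bilform}, which is stated for $V=\mathcal{E}_\Lambda v$ and so does not directly apply to $V\in H^1_0(\Omega)$.
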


\begin{proof} 
Let $W\in H^1(\Omega)$. Set $U:=\mathcal{E}_\Lambda\left(W|_{\pa}\right)$ and $V:=W-U$. Then, obviously, $U\in\mathcal{H}_\Lambda(\Omega)$, and $V\in H^1_0(\Omega)$ since $W|_{\pa}=U|_{\pa}$. 

We further have, by definition \eqref{eq:bilform},  integration by parts, and using  $-\Delta U-\Lambda U=0$ and $V|_{\pa}=0$,
\[
\myscals{U,V}_\Lambda= \myscal{\nabla U, \nabla V}_{L^2(\Omega)}-\Lambda\myscal{U,V}_{L^2(\Omega)} =  \myscal{-\Delta U, V}_{L^2(\Omega)}-\Lambda\myscal{U,V}_{L^2(\Omega)}=0.
\]
\end{proof}

Returning now to the proof of the fact that the right-hand side of \eqref{eq:muk_minimax1} coincides with that of \eqref{eq:muk_minimax}, we substitute 
$W=U+V$ with  $U\in\mathcal{H}_\Lambda(\Omega)$, $V\in H^1_0(\Omega)$ into \eqref{eq:muk_minimax1}, whence the Rayleigh quotient becomes
\[
\frac{\myscals{W,W}_\Lambda}{\|W|_{\pa}\|^2_{L^2(\pa)}}=\frac{\myscals{U,U}_\Lambda+\myscals{V,V}_\Lambda}{\|U|_{\pa}\|^2_{L^2(\pa)}}\ge \frac{\myscals{U,U}_\Lambda+(\lambda_1^\Dir(\Omega)-\Lambda)\|V\|^2_{L^2(\Omega)}}{\|U|_{\pa}\|^2_{L^2(\pa)}}\
\] 
as $\|\nabla U\|^2_{L^2(\Omega)}\ge \lambda_1^\Dir(\Omega)\| U\|^2_{L^2(\Omega)}$ for all $U\in H^1_0(\Omega)$. Since the second term in the numerator is non-negative and $V$ does not appear in the denominator, the minimisation procedure requires taking $V=0$, thus giving the right-hand side of  \eqref{eq:muk_minimax}.

\begin{remark}\label{rem:spaces} 
It is important to emphasise that the minimax principle \eqref{eq:muk_minimax1} \emph{cannot} be used for $\Lambda>\lambda_1^\Dir(\Omega)$. To illustrate this, we use the following example due to L.~Friedlander: let $W=U_1^{\Dir}+\epsilon\in H^1(\Omega)$, where $U_1^{\Dir}$ is an $L^2(\Omega)$-normalised principal eigenfunction of the Dirichlet Laplacian, and $\epsilon\ne 0$ is a small constant. Then the numerator of the Rayleigh quotient in \eqref{eq:muk_minimax1} converges to $-\left(\Lambda-\lambda_1^\Dir\right)$ as $\epsilon\to 0$, whereas the denominator is $\epsilon^2 |\pa|_{d-1}$, and thus the Rayleigh quotient tends to $-\infty$ as $\epsilon\to 0$ when $\Lambda>\lambda_1^\Dir(\Omega)$, contradicting the fact that $\DtN_\Lambda$ is semi-bounded below. 
\end{remark}

As a direct application of the variational principle, we have
 
\begin{proposition}\label{prop:sigma1upper}
Let $\Omega\subset\mathbb{R}^d$ be a bounded Lipschitz domain with volume $|\Omega|_d$ and boundary area $|\pa|_{d-1}$. Then 
\[
\sigma_1^{(\Lambda)}(\Omega) \le -\frac{\Lambda |\Omega|_d}{|\pa|_{d-1}}\qquad\text{for all }\Lambda<\lambda_1^\Dir(\Omega).
\]
\end{proposition}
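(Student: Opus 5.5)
The bound follows by inserting the constant function $W\equiv 1$ as a test function into the variational characterisation \eqref{eq:muk_minimax1} with $k=1$. This characterisation is valid exactly in the range $\Lambda<\lambda_1^\Dir(\Omega)$ assumed in the statement, as established above using the decomposition $H^1(\Omega)=\mathcal{H}_\Lambda(\Omega)\oplus H^1_0(\Omega)$.

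For $k=1$, formula \eqref{eq:muk_minimax1} states that $\sigma_1^{(\Lambda)}$ is the minimum over nonzero $W\in H^1(\Omega)$ of
\[
\frac{\|\nabla W\|^2_{L^2(\Omega)} - \Lambda\|W\|^2_{L^2(\Omega)}}{\|W|_{\pa}\|^2_{L^2(\pa)}}.
\]
Since $\Omega$ is bounded, $W\equiv 1$ lies in $H^1(\Omega)$. Its trace is the constant $1$ on $\pa$, which is nonzero, so the denominator equals $|\pa|_{d-1}>0$. Since $\nabla W=0$, the numerator equals $-\Lambda|\Omega|_d$. Hence
\[
\sigma_1^{(\Lambda)}(\Omega)\le -\frac{\Lambda|\Omega|_d}{|\pa|_{d-1}},
\]
which is the claimed inequality.

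The only point needing care is justifying that a test function outside $\mathcal{H}_\Lambda(\Omega)$ is admissible. For $\Lambda\neq 0$ the constant function is not $\Lambda$-harmonic, so \eqref{eq:muk_minimax} alone would not suffice. This is precisely why the restriction $\Lambda<\lambda_1^\Dir(\Omega)$ is needed: in that range the enlarged space $H^1(\Omega)$ may be used. Remark \ref{rem:spaces} shows that this step genuinely fails above $\lambda_1^\Dir(\Omega)$.

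If one prefers to work only with \eqref{eq:muk_minimax}, an alternative route is to split $1=U+V$ with $U=\mathcal{E}_\Lambda 1\in\mathcal{H}_\Lambda(\Omega)$ and $V\in H^1_0(\Omega)$. By the orthogonality $\myscals{U,V}_\Lambda=0$,
\[
\myscals{U,U}_\Lambda=\myscals{1,1}_\Lambda-\myscals{V,V}_\Lambda\le -\Lambda|\Omega|_d,
\]
because $\myscals{V,V}_\Lambda\ge(\lambda_1^\Dir-\Lambda)\|V\|^2_{L^2(\Omega)}\ge 0$. Since $U|_{\pa}=1$, the Rayleigh quotient of $U$ is at most $-\Lambda|\Omega|_d/|\pa|_{d-1}$, giving the same bound. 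Both routes are routine, and no substantial obstacle is expected.
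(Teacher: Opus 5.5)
Your proof is correct and is the paper's argument: the paper's proof is the single step of inserting the constant test function $U\equiv 1$ into the right-hand side of \eqref{eq:muk_minimax1}. Your alternative route via $1=\mathcal{E}_\Lambda 1+V$ with $V\in H^1_0(\Omega)$ is also correct, but it is just the paper's own justification of \eqref{eq:muk_minimax1} applied to this particular test function, so it adds no new ingredient.
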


\begin{proof} 
Use the test function $U\equiv 1$ in the right-hand side of \eqref{eq:muk_minimax1}.
\end{proof}

\subsection{Basic properties of eigenfunctions}\label{subsec:basic}

The eigenfunctions $\left\{u_k^{(\Lambda)}\right\}_{k=1}^\infty$ of the Dirichlet-to-Neumann
operator $\DtN_\Lambda$ form a complete  basis in $L^2(\pa)$ which can be chosen to be orthonormal.
From now on, we assume that
\begin{equation}\label{eq:normaliseef}
\myscal{u_j^{(\Lambda)}, u_k^{(\Lambda)}}_{L^2(\pa)} = \delta_{j,k}, 
\end{equation}
where $\delta_{j,k}$ is the Kronecker symbol.

\begin{remark} We emphasise the following terminological dichotomy. For any $\Lambda$, the \emph{eigenfunctions of the Dirichlet-to-Neumann map} $\DtN_\Lambda$ are the functions $u_k^{(\Lambda)}$ defined on the boundary of the domain $\Omega$, and the solutions of the corresponding Helmholtz equation  
are their $\Lambda$-harmonic extensions $U^{(\Lambda)}_k=\mathcal{E}_\Lambda u_k^{(\Lambda)}$ (defined inside $\Omega$). However in the context of the Steklov problem (with $\Lambda=0$) the functions $U^{(0)}_k$ are usually referred to as \emph{eigenfunctions of the Steklov problem} (or simply as  \emph{Steklov eigenfunctions}). Slightly abusing terminology, we will refer to $U^{(\Lambda)}_k$ for arbitrary $\Lambda$ as \emph{bulk eigenfunctions} of the Dirichlet-to-Neumann map.
\end{remark}

\begin{remark}
For real $\Lambda$, the eigenfunctions $u_k^{(\Lambda)}$ can be chosen to be real. 
By elliptic regularity,  the bulk eigenfunctions $\left\{U_k^{(\Lambda)}\right\}$ are in this case 
real-analytic functions in $\Omega$. If, additionally, the boundary is real-analytic, then $\left\{u_k^{(\Lambda)}\right\}$ is similarly a family of real-analytic functions on $\partial\Omega$.  See also Theorem \ref{thm:regularity}.
\end{remark}

We have the following explicit representation of bulk eigenfunctions of $\DtN_\Lambda$  in terms of the corresponding eigenfunctions of $\DtN_\Lambda$ and of the spectral data of the Dirichlet Laplacian.

\begin{proposition}\label{prop:Eexpand} 
For any bounded Lipschitz domain $\Omega\subset\mathbb{R}^d$ and any $\Lambda\in\mathbb{R}$,  we have
\[
U^{(\Lambda)}_k = \sum_{m=1}^\infty \frac{\myscal{u^{(\Lambda)}_k, \partial_n U^\Dir_m}_{L^2(\pa)}}{\Lambda-\lambda_m^\Dir} U^\Dir_m,\qquad k\in\mathbb{N},
\]
where the equality is understood in the $L^2(\Omega)$-sense.
\end{proposition}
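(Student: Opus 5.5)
This is essentially a corollary of Proposition \ref{prop:Elambda}, applied with $u=u^{(\Lambda)}_k$, since $U^{(\Lambda)}_k=\mathcal{E}_\Lambda u^{(\Lambda)}_k$ by definition. The work is checking that the hypotheses of Proposition \ref{prop:Elambda} hold in both regimes of $\Lambda$.

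\emph{Case $\Lambda\notin\Spec(-\Delta^\Dir_\Omega)$.} Here $u^{(\Lambda)}_k\in\Dom(\DtN_\Lambda)=H^{1/2}(\pa)$. Proposition \ref{prop:Elambda} then gives the formula directly, as an equality in $L^2(\Omega)$.

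\emph{Case $\Lambda\in\Spec(-\Delta^\Dir_\Omega)$.} Now the eigenfunction lies in $\Dom(\DtN_\Lambda)=H^1(\pa)\cap\mathcal{K}^\perp_\Lambda$. In particular $u^{(\Lambda)}_k\in\mathcal{K}^\perp_\Lambda$, so the second part of Proposition \ref{prop:Elambda} applies. For every $m$ with $\lambda_m^\Dir=\Lambda$, the numerator $\myscal{u^{(\Lambda)}_k,\partial_n U^\Dir_m}_{L^2(\pa)}$ vanishes, and the convention $\frac00:=0$ removes these terms.

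The one subtlety is that in this case $\mathcal{E}_\Lambda$ is multivalued: $U^{(\Lambda)}_k$ is defined only up to adding an element of $\mathcal{H}_{\Lambda,0}$. The integration by parts in the proof of Proposition \ref{prop:Elambda} determines $\myscal{U,U^\Dir_m}_{L^2(\Omega)}$ only for $\lambda^\Dir_m\ne\Lambda$. Those for $\lambda^\Dir_m=\Lambda$ remain free. I will therefore state that the formula picks out the representative of $U^{(\Lambda)}_k$ that is $L^2(\Omega)$-orthogonal to $\mathcal{H}_{\Lambda,0}$. This is the natural normalisation and is consistent with the $\frac00:=0$ convention.

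This bookkeeping of the normalisation is the only step I expect to need care. The remaining facts are already in the paper: Dirichlet eigenfunctions are in $H^{3/2}(\Omega)$, so the normal traces $\partial_n U^\Dir_m$ make sense, and $L^2(\Omega)$ convergence of the expansion is inherited from Proposition \ref{prop:Elambda}.
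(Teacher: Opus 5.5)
Your proposal is correct and matches the paper's proof, which is the single line ``apply Proposition~\ref{prop:Elambda} with $u:=u^{(\Lambda)}_k$''. Your extra remark is also accurate and makes explicit something the paper leaves implicit: when $\Lambda\in\Spec\left(-\Delta^\Dir_\Omega\right)$, the convention $\frac{0}{0}:=0$ picks out the representative of $U^{(\Lambda)}_k$ that is $L^2(\Omega)$-orthogonal to $\mathcal{H}_{\Lambda,0}$.
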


\begin{proof} Immediate by Proposition \ref{prop:Elambda} with $u:=u^{(\Lambda)}_k$.
\end{proof}

The functions $U^{(\Lambda)}_k$ are not, generally speaking, mutually orthogonal in $L^2(\Omega)$ (see also \cite{Wang} for some  related results on ``almost-orthogonality'' of Steklov eigenfunctions).
However, using formula \eqref{eq:bilform} with $u= u_k^{(\Lambda)}$ and $v=u_j^{(\Lambda)}$, and notation \eqref{eq:Lambdaprod}, together with our normalisation \eqref{eq:normaliseef}, we immediately obtain that 
\[
\myscals{U^{(\Lambda)}_k, U^{(\Lambda)}_j}_\Lambda = 
\begin{cases} 
0\qquad&\text{if }j\ne k,\\
\sigma_k^{(\Lambda)}\qquad&\text{if }j=k.
\end{cases}
\]

In addition, the integral of the Helmholtz equation in \eqref{eq:efexplicit}
over $\Omega$ together with the boundary conditions yields a useful relation
\[
\Lambda \int_\Omega U_k^{(\Lambda)} = - \sigma_k^{(\Lambda)} \int_{\pa} u_k^{(\Lambda)}.
\]

\subsection{Robin--Dirichlet-to-Neumann duality}\label{subsec:duality}

Even a cursory comparison of the Robin spectral problem \eqref{eq:Laplacian} and the spectral problem \eqref{eq:SteklovLambda} for the Dirichlet-to-Neumann map shows that these problems are closely linked together, see e.g.  \cite{GNP76}.  Indeed, replacing in the Robin problem \eqref{eq:Laplacian} the fixed parameter $-\gamma$ by the spectral parameter $\sigma$ and the spectral parameter $\lambda$ by a fixed parameter $\Lambda$, we arrive at  \eqref{eq:SteklovLambda}. Formally, this link is known as \emph{Robin--Dirichlet-to-Neumann duality}.

\begin{proposition}[{\cite{Arendt12,Hassannezhad22}}]\label{prop:DtNRduality}
Let $\Omega\subset\mathbb{R}^d$ be a bounded Lipschitz domain, and let $\Lambda, \sigma\in\mathbb{R}$. Then $\sigma\in\Spec\left(\DtN_\Lambda\right)$ if and only if $\Lambda\in\Spec\left(-\Delta^{\Rob,-\sigma}\right)$. Moreover, the multiplicities of $\sigma$ as an eigenvalue of $\DtN_\Lambda$ and $\Lambda$ as an eigenvalue of $-\Delta^{\Rob,-\sigma}$ coincide, and $\mathcal{E}_\Lambda u$ is an eigenfunction of the  Robin Laplacian $-\Delta^{\Rob,-\sigma}$ if and only if $u$ is an eigenfunction of $\DtN_\Lambda$.
\end{proposition}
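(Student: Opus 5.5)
The plan is to show that both spectral problems are the same weak eigenvalue problem for the form $\myscals{U,V}_\Lambda$ on $H^1(\Omega)$, read with two different choices of the spectral parameter. By definition, $\Lambda$ is an eigenvalue of $-\Delta^{\Rob,-\sigma}$ with eigenfunction $U\in H^1(\Omega)\setminus\{0\}$ exactly when
\[
\myscal{\nabla U,\nabla V}_{L^2(\Omega)}-\sigma\myscal{U|_{\pa},V|_{\pa}}_{L^2(\pa)}=\Lambda\myscal{U,V}_{L^2(\Omega)}\qquad\text{for all }V\in H^1(\Omega).
\]
This is the form of the Robin Laplacian with $\gamma=-\sigma$; it is closed and semibounded on Lipschitz domains by the trace inequality. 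Equivalently,
\[
\myscals{U,V}_\Lambda=\sigma\myscal{U|_{\pa},V|_{\pa}}_{L^2(\pa)}\qquad\text{for all }V\in H^1(\Omega). \tag{$\ast$}
\]

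\emph{Robin to DtN.} Suppose $U$ satisfies $(\ast)$. Testing with $V\in C_c^\infty(\Omega)$ gives $-\Delta U=\Lambda U$ in the distributional sense, so $U\in\mathcal{H}_\Lambda(\Omega)$. The generalised Green formula then gives $\myscals{U,V}_\Lambda=\langle\partial_n U,V|_{\pa}\rangle$, where $\partial_n U\in H^{-1/2}(\pa)$ is the weak normal derivative. Since traces of $H^1(\Omega)$ fill $H^{1/2}(\pa)$, we conclude $\partial_n U=\sigma u$ with $u=U|_{\pa}$.

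We also need $u\neq 0$. If $u=0$, then $\partial_nU=0$ as well, so $U$ has zero Cauchy data. Extending $U$ by zero outside $\Omega$ gives a solution of $-\Delta U=\Lambda U$ in a neighbourhood of $\overline\Omega$. By unique continuation it must vanish, a contradiction.

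If $\Lambda\notin\Spec(-\Delta^\Dir_\Omega)$, then $U=\mathcal{E}_\Lambda u$ and $\DtN_\Lambda u=\sigma u$. If $\Lambda$ is a Dirichlet eigenvalue, we use the extended definition instead. Solvability of \eqref{eq:HelmholtzML} gives $u\in\mathcal{K}_\Lambda^\perp$. Moreover $\sigma u\in\mathcal{K}_\Lambda^\perp$ already, so the projection acts trivially. The regularity $u\in H^1(\pa)$ comes from the cited result of Mitrea--Taylor.

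\emph{DtN to Robin.} Conversely, let $\DtN_\Lambda u=\sigma u$ and $U=\mathcal{E}_\Lambda u$. For any $W\in H^1(\Omega)$, use the decomposition $W=\mathcal{E}_\Lambda(W|_{\pa})+V$ with $V\in H^1_0(\Omega)$ from the earlier Proposition. We have $\myscals{U,V}_\Lambda=0$ for such $V$, and \eqref{eq:bilform} gives
\[
\myscals{U,W}_\Lambda=\myscal{\DtN_\Lambda u,W|_{\pa}}_{L^2(\pa)}=\sigma\myscal{u,W|_{\pa}}_{L^2(\pa)},
\]
which is $(\ast)$. At a Dirichlet eigenvalue the decomposition is not a direct sum and $\mathcal{E}_\Lambda$ is multivalued. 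Two points must be handled there.

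First, the projected normal derivative must equal the true one. Because $u\in\mathcal{K}_\Lambda^\perp$, we may replace $U$ by $U+U^\Dir$ for any $U^\Dir\in\mathcal{H}_{\Lambda,0}$. This choice can be used to kill the $\mathcal{K}_\Lambda$-component of $\partial_nU$, since the map $U^\Dir\mapsto\partial_nU^\Dir$ is injective by unique continuation again. The resulting $U$ then satisfies $\partial_n U=\sigma u$ exactly.

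Second, test functions $W$ whose traces are not in $\mathcal{K}_\Lambda^\perp$ must be handled. For these we do not decompose; we apply Green's formula directly to $U$, which is legitimate once $\partial_nU=\sigma u$ holds exactly.

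\emph{Multiplicities.} The two maps $U\mapsto U|_{\pa}$ and $u\mapsto\mathcal{E}_\Lambda u$ are linear and mutually inverse on the respective eigenspaces. Injectivity of the trace follows from the unique continuation argument above. At Dirichlet eigenvalues, $\mathcal{E}_\Lambda u$ is taken to be the corrected representative chosen above, which is unique for the same reason. Hence the eigenspace dimensions coincide.

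I expect the main obstacle to be the case $\Lambda\in\Spec(-\Delta^\Dir_\Omega)$, specifically showing that the projected definition loses no eigenfunctions and creates no spurious ones. This is where the unique continuation argument for Cauchy data on a Lipschitz boundary does the real work. The regular case is a direct consequence of \eqref{eq:bilform} and the decomposition Proposition.
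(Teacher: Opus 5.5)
The paper does not prove this proposition itself but defers to \cite{Arendt12,Hassannezhad22}. Your proposal is correct and follows essentially the standard argument there: both problems are the single weak identity $\myscals{U,W}_\Lambda=\sigma\myscal{U|_{\pa},W|_{\pa}}_{L^2(\pa)}$ for all $W\in H^1(\Omega)$, and unique continuation (zero Cauchy data, extension by zero) makes the trace injective on eigenspaces and singles out the right representative of $\mathcal{E}_\Lambda u$ when $\Lambda$ is a Dirichlet eigenvalue.

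Two small precisions. First, the $\mathcal{K}_\Lambda$-component of $\partial_n U$ can be removed because $\partial_n$ maps $\mathcal{H}_{\Lambda,0}$ \emph{onto} $\mathcal{K}_\Lambda$ by definition; injectivity gives uniqueness of the corrected representative, not its existence. Second, in the Robin-to-DtN direction, $u\in H^1(\pa)$ should be obtained from $L^2$ Neumann regularity using $\partial_n U=\sigma u\in L^2(\pa)$. Quoting the regularity of DtN eigenfunctions there is circular, since it presupposes what you are proving.
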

This duality, which we will  later quantify further in Proposition \ref{prop:dualityfurther} and which also holds for complex-valued $\Lambda$ (see \S\ref{sec:complex}), allows one to translate many results known for the Robin Laplacian to Dirichlet-to-Neumann maps, as illustrated in particular in \S\ref{sec:lambda_large}.

It is instructive to see how the Robin--Dirichlet-to-Neumann duality manifests itself in relation to \emph{isospectrality}.  The study of isospectral (i.e. having the same spectra with the
account of multiplicities) non-isometric domains and Riemannian manifolds is one of
the central themes in spectral geometry, 
going back to
the celebrated question of Mark Kac ``Can one hear the shape of a
drum?'' \cite{Kac}. Very little is known about this problem in the
context of the Dirichlet-to-Neumann operator, as well as of the Robin
eigenvalue problem.  In particular, it is not known whether there
exist non-isometric Euclidean bounded domains $\Omega_1$ and $\Omega_2$ which
are $\DtN_\Lambda$-isospectral for some real $\Lambda$ not in the
union of the Dirichlet spectra of $\Omega_1$ and $\Omega_2$.
 
At the same time, as was shown in \cite{GHW}, there exist Riemannian
manifolds with boundary that are Dirichlet isospectral, and also
$\DtN_\Lambda$-isospectral for {\em any} real $\Lambda$ which is not a
Dirichlet eigenvalue. In particular, one can construct non-isometric
flat surfaces with this property which are embedded in
$\mathbb{R}^3$. Note that by the Robin--Dirichlet-to-Neumann duality
mentioned above, it also follows that these manifolds are Robin
isospectral for any choice of the Robin parameter.  We remark that if
the Robin parameter is nonzero, no Robin isospectral non-isometric
Euclidean domains are known.

\section{Examples}\label{sec:examples}

To gain some intuition, we mention several examples in which the
eigenvalues and eigenfunctions can be computed.

\subsection{Intervals}\label{sec:intervals}

For the interval $\Omega = \mathcal{I}_1: = \left(-\frac{1}{2},\frac{1}{2}\right) \subset \R$ of unit length, the
Dirichlet-to-Neumann operator is particularly simple.  A general
solution of the Helmholtz equation $-\frac{\dr^2}{\dr x^2}U^{(\Lambda)}(x)=\Lambda U^{(\Lambda)}(x)$, decomposed into symmetric and anti-symmetric (with respect to $x=0$) components, is
\[
U^{(\Lambda)}(x) = c_\sr U_\sr^{(\Lambda)}(x) + c_\ar U_\ar^{(\Lambda)}(x),
\]
where $c_\sr, c_\ar$ are arbitrary constants, and  
\begin{equation}\label{eq:U1d}
U_\sr^{(\Lambda)}(x) := \begin{cases}
\cosh(\sqrt{-\Lambda} x)\qquad&\text{if }\Lambda<0,\\
1\qquad&\text{if }\Lambda=0,\\
\cos(\sqrt{\Lambda} x)\quad&\text{if }\Lambda>0,
\end{cases}
\qquad
U_\ar^{(\Lambda)}(x) := \begin{cases}
\sinh(\sqrt{-\Lambda} x)\quad&\text{if }\Lambda<0,\\
x\quad&\text{if }\Lambda=0,\\
\sin(\sqrt{\Lambda} x)\quad&\text{if }\Lambda>0.
\end{cases}
\end{equation}

As the boundary  $\pa$
consists of two points $\left\{\pm \frac{1}{2}\right\}$, and the boundary conditions are 
\[
\mp \frac{\dr}{\dr x}U^{(\Lambda)}\left(\mp \frac{1}{2}\right)-\sigma U^{(\Lambda)}\left(\mp \frac{1}{2}\right)=0
\]  
(recall that the normal derivative $\partial_n$ is always taken with respect to the exterior normal), we easily deduce that the spectrum of $\DtN_\Lambda$ on $\mathcal{I}_1$ consists (generically) of the two eigenvalues
\begin{equation}\label{eq:sigmaLambda1d} 
\begin{gathered}
\sigma_\sr^{(\Lambda)} = f_\sr(\Lambda):=
\begin{cases}
\sqrt{-\Lambda}\tanh\left(\frac{\sqrt{-\Lambda}}{2}\right)\quad&\text{if }\Lambda<0,\\
0,\quad&\text{if }\Lambda=0,\\
-\sqrt{\Lambda}\tan\left(\frac{\sqrt{\Lambda}}{2}\right)\quad&\text{if }\Lambda>0,
\end{cases}
\\
\sigma_\ar^{(\Lambda)} = f_\ar(\Lambda):=
\begin{cases}
\sqrt{-\Lambda}\coth\left(\frac{\sqrt{-\Lambda}}{2}\right)\quad&\text{if }\Lambda<0,\\
2,\quad&\text{if }\Lambda=0,\\
\sqrt{\Lambda}\cot\left(\frac{\sqrt{\Lambda}}{2}\right)\quad&\text{if }\Lambda>0,
\end{cases}
\end{gathered}
\end{equation}
with the corresponding bulk eigenfunctions being  (up to multiplication by a constant) $U_\sr^{(\Lambda)}(x)$ and $U_\ar^{(\Lambda)}(x)$, respectively.

We show the behaviour of the eigenvalues as functions of $\Lambda$ in Figure \ref{fig:plot1dDtNevs}.

\begin{figure}[htb]
\centering
\includegraphics{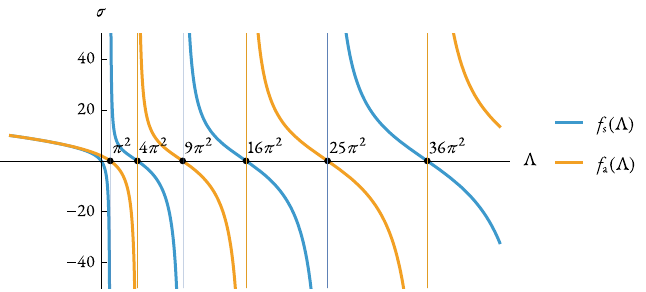}
\caption{Eigenvalues of the Dirichlet-to-Neumann map $\DtN_\Lambda$ for the interval $ \left(-\frac{1}{2},\frac{1}{2}\right)$ as functions of $\Lambda$.}
\label{fig:plot1dDtNevs}
\end{figure}

The following observations and comments are warranted. Before proceeding, let us re-label the eigenvalues of the Dirichlet Laplacian on $\mathcal{I}_1$, taking into account  symmetry/anti-symmetry of the corresponding eigenfunction, as 
\[
\lambda^\Dir_{\sr, m}(\mathcal{I}_1):= (2m-1)^2\pi^2,\qquad \lambda^\Dir_{\ar, m}(\mathcal{I}_1):= (2m)^2\pi^2,\qquad m\in\mathbb{N}.
\]
For further use, we formally denote
\begin{equation}\label{eq:lambda0}
\lambda^\Dir_{\sr, 0}(\mathcal{I}_1)=\lambda^\Dir_{\ar, 0}(\mathcal{I}_1):=-\infty.
\end{equation}

We first observe that when $\Lambda=\lambda^\Dir_{\oro, m}$, with some parity $\oro\in\{\sr,\ar\}$ and some $m\in\mathbb{N}$,  the corresponding Dirichlet-to-Neumann eigenvalue $\sigma^{(\Lambda)}_\oro$  blows up. At the same time, since these values of $\Lambda$, together with $\Lambda=0$, are also the Neumann Laplacian eigenvalues of the interval, by the Robin--Dirichlet-to-Neumann duality the other eigenvalue $\sigma^{(\Lambda)}_{-\oro}$  (with an opposite parity $-\sr:=\ar$ or $-\ar:=\sr$) vanishes.

Secondly, we have 
\[
\sigma_{\sr}^{(\Lambda)}\approx \sigma_{\ar}^{(\Lambda)}\approx \sqrt{-\Lambda}\qquad\text{as }\Lambda\to -\infty.  
\]

Thirdly,  note that on each interval of continuity $\left(\lambda^\Dir_{\oro, m-1}(\mathcal{I}_1), \lambda^\Dir_{\oro, m}(\mathcal{I}_1)\right)$, $m\in\mathbb{N}$ (with account of \eqref{eq:lambda0} if $m=1$), of the function $f_\oro(\Lambda)$,  this function is monotone decreasing from $+\infty$ to $-\infty$, and therefore has an inverse which we denote by
\[
f_{\oro, m}^{-1}: \mathbb{R}\to  \left(\lambda^\Dir_{\oro, m-1}(\mathcal{I}_1), \lambda^\Dir_{\oro, m}(\mathcal{I}_1)\right),\qquad \oro\in\{\sr,\ar\}, m\in\mathbb{N}.
\]  
Thus, the spectrum of the one-dimensional Robin Laplacian $-\Delta^{\Rob,\gamma}(\mathcal{I}_1)$ with $\gamma\in\mathbb{R}$ is given by
\[
\operatorname{Spec}\left(-\Delta^{\Rob,\gamma}(\mathcal{I}_1)\right)=\left\{f_{\oro, m}^{-1}(-\gamma), \oro\in\{\sr, \ar\}, m\in\mathbb{N}\right\}.
\]

Finally, if we consider an interval $\mathcal{I}_\alpha=\left(-\frac{\alpha}{2},\frac{\alpha}{2}\right)$ of length $\alpha$, the eigenvalues of the Dirichlet-to-Neumann operator can be expressed in terms of  \eqref{eq:sigmaLambda1d} using \eqref{eq:scaling}: they are
\[
\frac{1}{\alpha}f_\sr(\alpha^2\Lambda)\qquad\text{and}\qquad \frac{1}{\alpha}f_\ar(\alpha^2\Lambda).
\]
We preserve the notational convention \eqref{eq:lambda0} with $\mathcal{I}_\alpha$ replacing $\mathcal{I}_1$. 

We emphasise that this one-dimensional example, for which the spectrum of the Dirichlet-to-Neumann map 
consists of at most two eigenvalues, is very specific.  In the rest of the
paper we exclude such pathological situations and consider only bounded 
domains in $\R^d$ with $d \geq 2$. 

\subsection{Disk and balls}\label{sec:disk}

Let $\Omega = \mathbb{D}:=\{x\in \R^2 : |x|<1\}$ be the unit disk.  A
general solution of the Helmholtz equation $-\Delta U - \Lambda U =0$ in $\mathbb{D}$,  regular at the origin,  can be
written in polar coordinates $(r,\theta)$ as
\begin{equation}\label{eq:ULambdadisk}
U^{(\Lambda)}(r,\theta) = \sum_{m\in \Z} c_m \er^{\ir m\theta} F_{|m|}^{(\Lambda)}(r),
\end{equation}
where $c_m$ are constants,
\[
F_m^{(\Lambda)}(r) := 
\begin{cases}
I_{m}\left(\sqrt{-\Lambda}r\right)\qquad&\text{if }\Lambda<0,\\ 
r^m\qquad&\text{if }\Lambda=0,\\
J_{m}\left(\sqrt{\Lambda}r\right)\qquad&\text{if }\Lambda>0,
\end{cases}
\qquad m=0,1,2,\dots,
\]
and $J_m$ and $I_m$ are the Bessel and the modified Bessel functions, respectively, of the first kind of order $m$. 

Let $u(\theta)=\sum\limits_{m\in \Z} b_m \er^{\ir m\theta}$ be an $L^2(\partial\mathbb{D})$-function expanded into the Fourier series, with $b_m  := \frac{1}{\sqrt{2\pi}} \myscal{u, \er^{\ir m\theta}}_{L^2(\partial\mathbb{D})}$. Then $u\in H^{1/2}(\partial\mathbb{D})$ if and only if $\sum\limits_{m\in\Z} m b_m^2 <\infty$. It is easily checked that $\left.U^{(\Lambda)}\right|_{\partial\mathbb{D}}=u$ if we take $c_m=\frac{b_m}{F^{(\Lambda)}_{|m|}(1)}$ in  \eqref{eq:ULambdadisk}, and the Dirichlet-to-Neumann map applied to $u$ is
\[
\mathcal{D}_\Lambda u = \sum_{m\in \Z} b_m \er^{\ir m\theta} \frac{\left(F^{(\Lambda)}_{|m|}\right)'(1)}{F^{(\Lambda)}_{|m|}(1)}.
\]
It follows immediately that the eigenvalues of $\DtN_\Lambda$ are 
\begin{equation}\label{eq:sigmadisk}
\sigma^{(\Lambda)}_{(m)}:= \frac{\left(F^{(\Lambda)}_{m}\right)'(1)}{F^{(\Lambda)}_{m}(1)}=\begin{cases}
\frac{\sqrt{-\Lambda}I'_m(\sqrt{-\Lambda})}{I_m(\sqrt{-\Lambda})}\qquad&\text{if }\Lambda<0,\\ 
m\qquad&\text{if }\Lambda=0,\\
\frac{\sqrt{\Lambda}J'_m(\sqrt{\Lambda})}{J_m(\sqrt{\Lambda})}\qquad&\text{if }\Lambda>0,
\end{cases}
\qquad m=0,1,2,\dots,
\end{equation}
which are simple if $m=0$, with the corresponding constant eigenfunction $\frac{1}{\sqrt{2\pi}}$, and double if $m>0$, with the corresponding eigenspaces $\operatorname{Span}\left\{\frac{1}{\sqrt{2\pi}}  \er^{\pm \ir m\theta}\right\}= \operatorname{Span}\left\{\frac{\sin m\theta}{\sqrt{\pi}}, \frac{\cos m\theta}{\sqrt{\pi}}\right\}$. We show some of the curves $\sigma^{(\Lambda)}_{(m)}$ as functions of $\Lambda$ in Figure \ref{fig:plotDtNdisk}.

\begin{figure}[!htbp]
\centering
\includegraphics{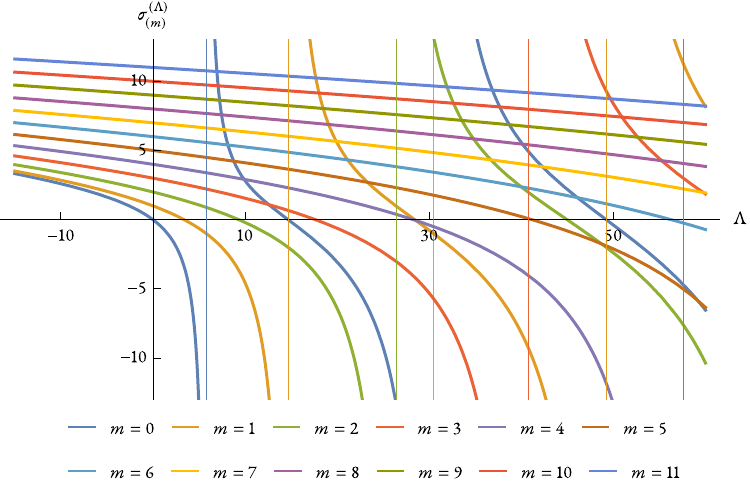}
\caption{Eigenvalues of the Dirichlet-to-Neumann map $\DtN_\Lambda$ for the unit disk as functions of $\Lambda$. The eigenvalue with $m=0$ is simple, all the others are double, except at the intersection points. The thin vertical lines indicate the positions of the Dirichlet eigenvalues. }
\label{fig:plotDtNdisk}
\end{figure}

\begin{remark}\label{rem:disk} 
Note that the expression \eqref{eq:sigmadisk} for $\sigma^{(\Lambda)}_{(m)}$ is invalid when $\sqrt{\Lambda}$ equals the $k$th positive zero $j_{m, k}$ of the Bessel function $J_m$. This happens since the eigenvalues of the Dirichlet Laplacian on $\mathbb{D}$  are exactly $j_{m, k}^2$ (simple ones for $m=0$ and double ones for $m>0$). 

The subscript $(m)$ in \eqref{eq:sigmadisk} does not indicate the position of the eigenvalue(s) in the spectrum but just refers to the corresponding eigenspace. Note that ordering eigenvalues in the non-decreasing order as $\sigma_1^{(\Lambda)}\le \sigma_2^{(\Lambda)}\le\dots$ is straightforward  for $\Lambda=0$: we have 
\[
\sigma_1^{(0)}=\sigma_{(0)}^{(0)}=0,\qquad \sigma_{2k-1}^{(0)}=\sigma_{2k}^{(0)}=\sigma_{(k)}^{(0)}=k,\qquad k\in\mathbb{N}.
\]
As shown in Appendix \ref{sec:ordering}, this
order of eigenvalues is preserved for any $\Lambda < \lambda_1^\Dir(\mathbb{D}) = j_{0,1}^2$: in this case we still have
\begin{equation}\label{eq:orderdisk}
\sigma_k^{(\Lambda)}=\sigma_{\left(\left\lfloor \frac{k+1}{2} \right\rfloor\right)}^{(\Lambda)},\qquad k\in\mathbb{N},
\end{equation}
where $\lfloor \cdot \rfloor$ denotes the integer part.

One can also see that the eigenfunctions of $\DtN_\Lambda$ for the disk  do not depend on
$\Lambda$; this situation is rather exceptional due to the rotational symmetry of the domain.
\end{remark}

Other examples of rotationally-invariant domains, for which the
eigenvalues and eigenfunctions of $\DtN_\lambda$ are known explicitly,
include a concentric annulus, a ball and a concentric spherical shell,
see \cite{Grebenkov20c}.  These explicit examples also rely on the
separation of variables in the Helmholtz equation.  For instance, for
the unit  ball $\Omega = \mathbb{B}^d=\{x\in\R^d :|x|<1\}$,  $d\ge 3$,
the eigenfunctions of  the Dirichlet-to-Neumann map $\DtN_\Lambda$ coincide with the eigenfunctions of the boundary Laplacian $-\Delta_{\mathbb{S}^{d-1}}$ (that is, they are restrictions of the harmonic homogeneous polynomials in $\R^{d}$ to the sphere $\mathbb{S}^{d-1}$), and the eigenvalues are
\[
\sigma^{(\Lambda)}_{(d, m)}:=\begin{cases}
\frac{\sqrt{-\Lambda}I'_{\frac{d}{2}-1+m}(\sqrt{-\Lambda})}{I_{\frac{d}{2}-1+m}(\sqrt{-\Lambda})}-\left(\frac{d}{2}-1\right)\qquad&\text{if }\Lambda<0,\\ 
m\qquad&\text{if }\Lambda=0,\\
\frac{\sqrt{\Lambda}J'_{\frac{d}{2}-1+m}(\sqrt{\Lambda})}{J_{\frac{d}{2}-1+m}(\sqrt{\Lambda})}-\left(\frac{d}{2}-1\right)\qquad&\text{if }\Lambda>0,
\end{cases}
\qquad m=0,1,2,\dots,
\]
where the multiplicity of $\sigma^{(\Lambda)}_{(d, m)}$ is $\binom{d+m-1}{d-1} - \binom{d+m-3}{d-1}$. 

\subsection{Rectangles and cuboids}\label{sec:cuboids}

The eigenvalues and eigenfunctions of the operator $\DtN_0$ of the Steklov problem in a square can be found semi-explicitly using separation of variables via solutions of some transcendental equations involving trigonometric and hyperbolic functions, see \cite{Girouard17} or  \cite[\S 7.1.2]{Levitin}. This method has been extended to the Steklov problem in cuboids in \cite{Girouard19}, and admits an easy generalisation to the case of an arbitrary $\Lambda\not\in\Spec\left(-\Delta^\Dir\right)$.

Let $\Omega=Q_{\boldsymbol{\alpha}}:=(-\alpha_1, \alpha_1)\times\dots\times(-\alpha_d, \alpha_d)\subset\mathbb{R}^d$, where  $\boldsymbol{\alpha}=(\alpha_1,\dots,\alpha_d)\in(0,+\infty)^d$. We are seeking solutions  of \eqref{eq:SteklovLambda} in $Q_{\boldsymbol{\alpha}}$ as products of one-dimensional Robin 
eigenfunctions in each direction, corresponding to unknown Robin eigenvalues $\Lambda_1,\dots,\Lambda_d\in\mathbb{R}$, that is, we set
\begin{equation}\label{eq:Ucuboid}
U(x_1,\dots,x_d)=\prod_{j=1}^d U^{(\alpha_j^2\Lambda_j)}_{\oro_j}\left(\frac{x_j}{\alpha_j}\right),\qquad \boldsymbol{\aleph}=(\oro_1,\dots,\oro_d)\in\{\sr, \ar\}^d,
\end{equation}
where $U^{(\alpha_j^2\Lambda_j)}_{\oro_j}$ are given by \eqref{eq:U1d}. For every choice of parameters $\oro_j$ (there are $2^d$ such choices altogether) this yields the following system of equations for the eigenvalues $\sigma^{(\Lambda)}$ of $\DtN_\Lambda$ and unknown parameters $\Lambda_1,\dots,\Lambda_d$:
\begin{equation}\label{eq:sigmacuboids}
\left\{
\begin{aligned}
\sigma^{(\Lambda)}&=\frac{1}{\alpha_1}f_{\oro_1}(\alpha_1^2\Lambda_1)=\dots=\frac{1}{\alpha_d}f_{\oro_d}(\alpha_d^2\Lambda_d),\\
\Lambda&=\Lambda_1+\dots+\Lambda_d,
\end{aligned}\right.
\end{equation}
where $f_{\oro}$, $\oro\in\{\sr,\ar\}$, are  defined by \eqref{eq:sigmaLambda1d}. We note that the same argument as in \cite{Girouard19} (using the Robin--Dirichlet-to-Neumann duality, cf.\ also \cite{Laugesen19}) allows one to conclude that \emph{all} solutions of \eqref{eq:SteklovLambda} in $Q_{\boldsymbol{\alpha}}$  can be represented as linear combinations of those in the form \eqref{eq:Ucuboid}.

We shall briefly describe the procedure for solving the system of equations \eqref{eq:sigmacuboids}. Set, for brevity, $\sigma=\sigma^{(\Lambda)}$, and assume that the vector of  parity  parameters $\boldsymbol{\aleph}=(\oro_1,\dots,\oro_d)\in\{\sr, \ar\}^d$ is fixed. Let us choose additionally a vector $\mathbf{m}=(m_1, \dots, m_d)\in \mathbb{N}^d$, and let us look for additional unknowns $\Lambda_1, \dots,\Lambda_d$ satisfying 
\[
\lambda_{\oro_j, m_j-1}^\Dir(\mathcal{I}_1)<\alpha_j \Lambda_j<\lambda_{\oro_j, m_j}^\Dir(\mathcal{I}_1), \qquad j=1,\dots,d.
\]
Then the first line of equations in \eqref{eq:sigmacuboids} yields
\[
\Lambda_j = \frac{1}{\alpha_j^2}f_{\oro_j, m_j}^{-1}\left(\alpha_j \sigma\right), 
\]
and substituting into the second line we get
\begin{equation}\label{eq:Lambdaimpl}
\Lambda = \sum_{j=1}^d \frac{1}{\alpha_j^2}f_{\oro_j, m_j}^{-1}\left(\alpha_j \sigma\right).
\end{equation}

Let us denote the right-hand side of \eqref{eq:Lambdaimpl} by $g_{\boldsymbol{\aleph},\mathbf{m}}(\sigma)$. It is easily seen from the properties of functions $f_{\oro, m}^{-1}$ discussed in \S\ref{sec:intervals} that $g_{\boldsymbol{\aleph},\mathbf{m}}$ is a monotone decreasing function on $\mathbb{R}$ with 
\[
\lim_{\sigma\to-\infty}g_{\boldsymbol{\aleph},\mathbf{m}}(\sigma) = \sum_{j=1}^d \frac{1}{\alpha_j^2} \lambda_{\oro_j, m_j}^\Dir(\mathcal{I}_1) =: \lambda_{\boldsymbol{\aleph}, \mathbf{m}}^\Dir(Q_{\boldsymbol{\alpha}}) 
\]
(that is, it tends to a Dirichlet eigenvalue of $Q_{\boldsymbol{\alpha}}$ as $\sigma\to-\infty$), and
\[
\lim_{\sigma\to\infty}g_{\boldsymbol{\aleph},\mathbf{m}}(\sigma) = \sum_{j=1}^d \frac{1}{\alpha_j^2} \lambda_{\oro_j, m_j-1}^\Dir(\mathcal{I}_1) = \lambda_{\boldsymbol{\aleph}, \mathbf{m}-\mathbf{1}}^\Dir(Q_{\boldsymbol{\alpha}}),
\]
where $\mathbf{1}:=(1,\dots,1)$, and we formally set  
\[
\lambda_{\boldsymbol{\aleph}, \mathbf{m}-\mathbf{1}}^\Dir(Q_{\boldsymbol{\alpha}}):=-\infty\qquad \text{if }\min_{j\in\{1,\dots,d\}} m_j = 1 
\]
(that is, $g_{\boldsymbol{\aleph},\mathbf{m}}(\sigma)$ tends to either another Dirichlet eigenvalue of $Q_{\boldsymbol{\alpha}}$ or to $-\infty$ as $\sigma\to+\infty$). Therefore, the inverse function
\[
g_{\boldsymbol{\aleph},\mathbf{m}}^{-1}: \left(\lambda_{\boldsymbol{\aleph}, \mathbf{m}-\mathbf{1}}^\Dir(Q_{\boldsymbol{\alpha}}), \lambda_{\boldsymbol{\aleph}, \mathbf{m}}^\Dir(Q_{\boldsymbol{\alpha}})\right)\to\mathbb{R}
\]
is well-defined, and we conclude from \eqref{eq:Lambdaimpl} that the eigenvalues $\sigma=\sigma^{(\Lambda)}$ of $\DtN_\Lambda$ are the values 
\[
g_{\boldsymbol{\aleph},\mathbf{m}}^{-1}(\Lambda),
\]
where $\boldsymbol{\aleph}$ and $\mathbf{m}$ are all the possible choices such that $\Lambda\in \left(\lambda_{\boldsymbol{\aleph}, \mathbf{m}-\mathbf{1}}^\Dir(Q_{\boldsymbol{\alpha}}), \lambda_{\boldsymbol{\aleph}, \mathbf{m}}^\Dir(Q_{\boldsymbol{\alpha}})\right)$.

\begin{example} Some eigenvalues of the Dirichlet-to-Neumann map for the square $\Omega=\left(-\frac{\pi}{2}, \frac{\pi}{2}\right)^2$, computed according to the algorithm above, are shown in  Figure  \ref{fig:plotDtNsquare}.
\begin{figure}[!htbp]
\centering
\includegraphics{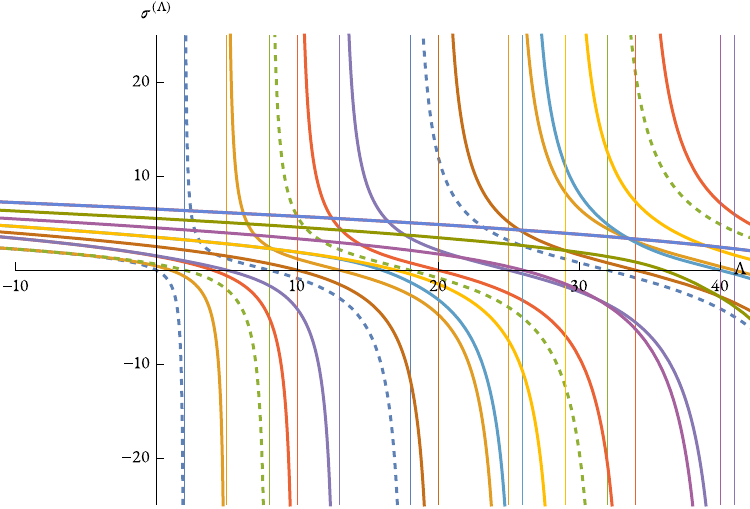}
\caption{Some eigenvalues $\sigma^{(\Lambda)}$, plotted as functions of $\Lambda$, for the square of side $\pi$. The dashed curves correspond to simple eigenvalues, and the solid curves to the double ones, except at the points of intersection. The thin vertical lines indicate the positions of the Dirichlet eigenvalues.}
\label{fig:plotDtNsquare}
\end{figure}
\end{example}

\begin{example} Unlike the case of the disk, cf. Remark \ref{rem:disk} and Appendix \ref{sec:ordering}, different eigenvalue branches for a cuboid can intersect even for $\Lambda<0$, see Figure \ref{fig:crossrect}.
\begin{figure}[!htbp]
\centering
\includegraphics{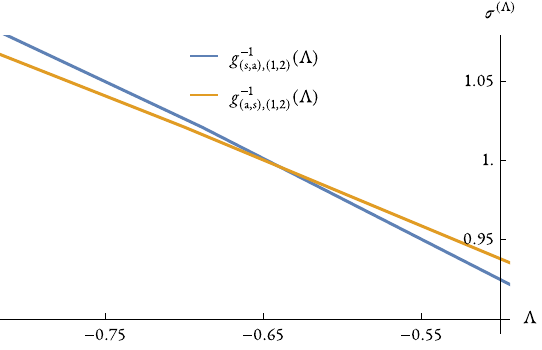}
\caption{An example of two analytic eigenvalue branches  for the rectangle $Q_{\left(\frac{\pi}{2}, \frac{27\pi}{16}\right)}$ intersecting at $\Lambda\approx -0.65$.}
\label{fig:crossrect}
\end{figure}
\end{example}

\subsection{A numerical example}

For illustrative purposes, we show some plots of bulk eigenfunctions of the Dirichlet-to-Neumann map for the asymmetric kite $\mathcal{K}\subset\mathbb{R}^2$ whose boundary  is given parametrically by
\[
\partial\mathcal{K} := \left\{\left(1.5 \cos t + 0.7 \cos 2t - 0.4, 1.5 \sin t - 0.3 \cos t\right) , t \in  [0, 2\pi)\right\},
\]
see Figure \ref{fig:kite}. The eigenvalues and eigenfunctions are computed numerically using the finite-element method, see \S\ref{sec:FEM}. For other illustrative numerical examples, see \cite{Chaigneau24} and \cite{Nigam25}.

\begin{figure}[!htbp]
\centering
\includegraphics{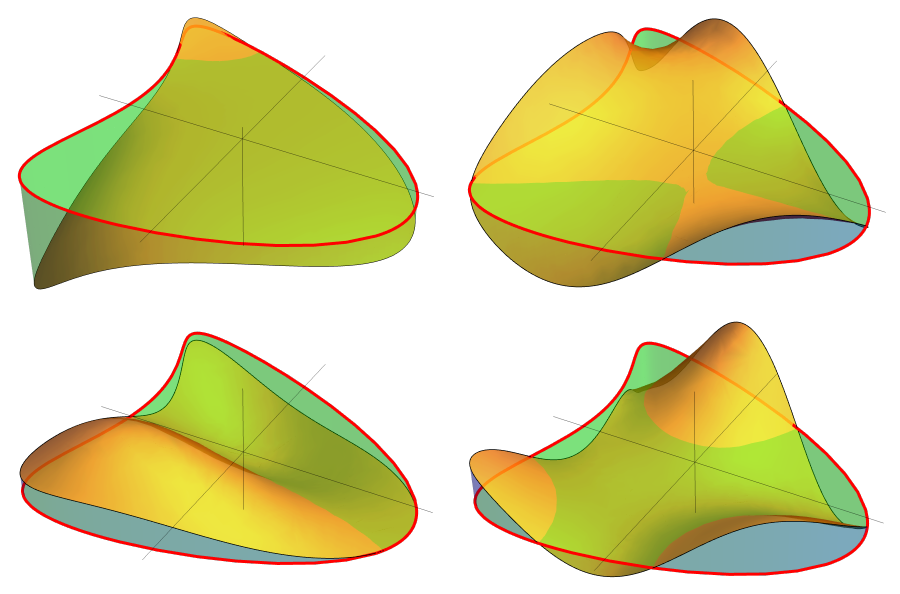}
\caption{Some bulk eigenfunctions for the kite $\mathcal{K}$. Top row, left to right, $\Lambda=-5$: the eigenfunctions $U^{(-5)}_2$ corresponding to the eigenvalue $\sigma^{(-5)}_2\approx 1.743$, and $U^{(-5)}_6$ corresponding to the eigenvalue $\sigma^{(-5)}_6\approx 2.740$. Bottom row, left to right, $\Lambda=5\in \left(\lambda_1^\Dir(\mathcal{K}), \lambda_2^\Dir(\mathcal{K})\right)$: the eigenfunctions $U^{(5)}_2$ corresponding to the eigenvalue $\sigma^{(5)}_2\approx -3.344$, and $U^{(5)}_{6}$ corresponding to the eigenvalue $\sigma^{(5)}_{6}\approx 0.784$.}
\label{fig:kite}
\end{figure}

\section{Spectral properties of $\DtN_\Lambda$}\label{sec:spectralproperties}
\subsection{Dependence of the eigenvalues  on the parameter $\Lambda$}\label{sec:lambda}

The general picture of the dependence of the eigenvalues of $\DtN_\Lambda$  on the parameter $\Lambda$ is similar to that in the specific case of the disk shown in Figure \ref{fig:plotDtNdisk}. Namely, we have the following 

\begin{theorem}\label{thm:lambdadep} Let $\Omega\subset \mathbb{R}^d$ be a bounded Lipschitz domain. The eigenvalues $\sigma_k^{(\Lambda)}$ of the Dirichlet-to-Neumann map $\DtN_\Lambda$ are strictly monotone decreasing functions of $\Lambda$ on any interval of the real line not containing the points of the Dirichlet spectrum $\Spec\left(-\Delta^\Dir_\Omega\right)$. If $\lambda^\Dir$ is a Dirichlet eigenvalue of multiplicity $m$, then the first $m$ eigenvalues of $\DtN_\Lambda$ tend to $-\infty$ as $\Lambda\to\left(\lambda^\Dir\right)^-$. Moreover, zero is in the spectrum of $\DtN_\Lambda$ if and only if $\Lambda\in \Spec\left(-\Delta^\Neu_\Omega\right)$.
\end{theorem}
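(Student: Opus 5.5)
The proof splits into three parts: monotonicity, blow-up near Dirichlet eigenvalues, and the zero eigenvalue.

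\emph{Monotonicity.} Fix an open interval $J$ containing no Dirichlet eigenvalues and points $\Lambda<\Lambda'$ in $J$. I would compare $\DtN_\Lambda$ and $\DtN_{\Lambda'}$ through Robin--Dirichlet-to-Neumann duality (Proposition \ref{prop:DtNRduality}). The cleanest route is a Hellmann--Feynman type formula. For $u\in H^{1/2}(\pa)$, set $U=\mathcal{E}_\Lambda u$ and $U'=\mathcal{E}_{\Lambda'}u$. Green's formula applied to $U,U'$, which share the trace $u$, gives
\[
\myscal{\DtN_{\Lambda'}u,u}_{L^2(\pa)}-\myscal{\DtN_\Lambda u,u}_{L^2(\pa)}=-(\Lambda'-\Lambda)\myscal{U,U'}_{L^2(\Omega)}.
\]
I would differentiate this in $\Lambda$. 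Proposition \ref{prop:Elambda} makes the map $\Lambda\mapsto\mathcal{E}_\Lambda u$ analytic on $J$ with values in $L^2(\Omega)$, and from the identity I expect
\[
\frac{\dr}{\dr\Lambda}\myscal{\DtN_\Lambda u,u}=-\|\mathcal{E}_\Lambda u\|^2_{L^2(\Omega)}<0
\]
for $u\neq 0$. I would then check that the quadratic forms are \emph{strictly} decreasing in $\Lambda$ with a uniform bound relative to the $L^2(\pa)$ norm on eigenspaces. Feeding this into the minimax \eqref{eq:abs_minimax} over the common domain $H^{1/2}(\pa)$ gives $\sigma_k^{(\Lambda')}\le\sigma_k^{(\Lambda)}$.

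For strictness, take $\mathcal{L}$ to be the span of the first $k$ eigenfunctions at $\Lambda$. Each $u\in\mathcal{L}$ satisfies $\myscal{\DtN_{\Lambda'}u,u}<\myscal{\DtN_\Lambda u,u}\le\sigma_k^{(\Lambda)}\|u\|^2$. Since $\mathcal{L}$ is finite-dimensional, compactness of its unit sphere makes the maximum strictly smaller, so $\sigma_k^{(\Lambda')}<\sigma_k^{(\Lambda)}$. The main technical point I anticipate is justifying the derivative formula (or the integrated comparison) on the fixed domain $H^{1/2}(\pa)$. An alternative for this step is to argue directly via duality: $\Lambda$ as a function of $\sigma$ along Robin eigenvalue branches $\lambda_k^{\Rob,-\sigma}$ is strictly decreasing in $\sigma$, by the Robin minimax.

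\emph{Blow-up.} Let $\lambda^\Dir$ have multiplicity $m$ with eigenfunctions $U^\Dir_j$. Their normal derivatives $\partial_n U^\Dir_j$ are linearly independent: if a combination vanished, unique continuation (Cauchy data zero) would force the combination itself to vanish. For $u$ in the $m$-dimensional span of these normal derivatives, Proposition \ref{prop:Elambda} and the bilinear form \eqref{eq:bilform} give
\[
\myscal{\DtN_\Lambda u,u}=-\myscal{U,U'}\text{-type terms}\ \sim\ \frac{c\,|\myscal{u,\partial_n U^\Dir_j}|^2}{\Lambda-\lambda^\Dir}+O(1).
\]
More precisely, differentiating the quadratic form gives $-\|\mathcal{E}_\Lambda u\|^2$, which blows up like $(\lambda^\Dir-\Lambda)^{-2}$ on this span; integrating, the form tends to $-\infty$ uniformly on the unit sphere of the $m$-dimensional space. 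Minimax then gives $\sigma_m^{(\Lambda)}\to-\infty$, and hence so do $\sigma_1^{(\Lambda)},\dots,\sigma_m^{(\Lambda)}$. The uniformity of the $O(1)$ remainder, controlled by the remaining Dirichlet modes that stay away from $\Lambda$, is the step that needs care.

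\emph{Zero eigenvalue.} Apply Proposition \ref{prop:DtNRduality} with $\sigma=0$: $0\in\Spec(\DtN_\Lambda)$ if and only if $\Lambda\in\Spec(-\Delta^{\Rob,0})=\Spec(-\Delta^\Neu)$. When $\Lambda$ is also a Dirichlet eigenvalue, the extended definition of $\DtN_\Lambda$ on $\mathcal{K}^\perp_\Lambda$ must be used; duality is stated for all real $\Lambda$, so this case is covered.
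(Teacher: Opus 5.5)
Your proof is correct, but for the first two assertions it takes a different route from the one the paper indicates. The paper does not write out a proof. It attributes the result to \cite{Friedlander91} and \cite{Arendt12} and says it rests on Robin--Dirichlet-to-Neumann duality and monotonicity of Robin eigenvalues. Concretely, one inverts the strictly increasing branches $\gamma\mapsto\lambda_k^{\Rob,\gamma}$ and uses $\lambda_k^{\Rob,\gamma}\to\lambda_k^\Dir$ as $\gamma\to+\infty$.

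You instead compare the quadratic forms of $\DtN_\Lambda$ directly on the fixed form domain $H^{1/2}(\pa)$. This is a form-level version of \eqref{eq:firstder}, so no simplicity assumption is needed. You can even skip the differentiation: inserting Proposition \ref{prop:Elambda} into your Green identity gives
\[
\myscal{\DtN_{\Lambda'}u,u}_{L^2(\pa)}-\myscal{\DtN_{\Lambda}u,u}_{L^2(\pa)}=-(\Lambda'-\Lambda)\sum_{n=1}^\infty\frac{\myscal{u,\partial_nU^\Dir_n}_{L^2(\pa)}^2}{(\Lambda-\lambda^\Dir_n)(\Lambda'-\lambda^\Dir_n)}.
\]
This is the quadratic-form version of \eqref{eq:Dmatrixentry}. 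When no Dirichlet eigenvalue separates $\Lambda$ and $\Lambda'$, the sum is strictly positive for $u\neq0$, which gives strict monotonicity at once.

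Now fix $\Lambda$ in the gap below $\lambda^\Dir$ and let $\Lambda'\to(\lambda^\Dir)^-$. This shows your worry about uniformity of the $O(1)$ remainder is unfounded: the non-singular terms have the favourable sign and can simply be dropped in an upper bound. You get $\sigma_m^{(\Lambda)}\le C-c/(\lambda^\Dir-\Lambda)$ with $c>0$.

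Your route is self-contained and gives this explicit rate without any Robin theory. The duality route yields, in one stroke, the counting statements (Proposition \ref{prop:dualityfurther}, Corollary \ref{cor:Ndiff}) and the zero-eigenvalue claim, where you also use duality. It also gives the fact that \emph{exactly} $m$ eigenvalues blow up, which is what the proof of Theorem \ref{thm:analyticity} draws from this theorem. Your argument only gives ``at least $m$''. The complementary lower bound on $\sigma_{m+1}^{(\Lambda)}$ comes cheaply from duality. Write $\lambda^\Dir=\lambda^\Dir_\ell=\dots=\lambda^\Dir_{\ell+m-1}<\lambda^\Dir_{\ell+m}$. Proposition \ref{prop:dualityfurther} gives $\Lambda=\lambda^{\Rob,-\sigma_{m+1}^{(\Lambda)}}_{\ell+m}$. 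So $\sigma_{m+1}^{(\Lambda)}\to-\infty$ would force $\Lambda\to\lambda^\Dir_{\ell+m}>\lambda^\Dir$, a contradiction.

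One technical repair is needed in the blow-up step. On a Lipschitz domain, $\partial_nU^\Dir_j$ is in general only in $L^2(\pa)$, not in $H^{1/2}(\pa)$; near a re-entrant corner it behaves like a negative power of the distance to the vertex. So the span of these normal derivatives is not an admissible test space in \eqref{eq:abs_minimax}. Use instead any $m$-dimensional $\mathcal{L}\subset H^{1/2}(\pa)$ on which the map $u\mapsto\bigl(\myscal{u,\partial_nU^\Dir_j}_{L^2(\pa)}\bigr)_{j}$ is injective. Such a space exists by density and the linear independence you established, and the rest of your argument goes through unchanged.
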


Theorem \ref{thm:lambdadep} was first proved by L. Friedlander in the smooth case \cite{Friedlander91} and later extended to the Lipschitz case in \cite{Arendt12}. The proof essentially relies on the Robin--Dirichlet-to-Neumann duality and the monotonicity properties of Robin eigenvalues.

It should be noted from Figures \ref{fig:plotDtNdisk} and \ref{fig:plotDtNsquare} that the eigenvalues $\sigma_k^{(\Lambda)}$ (enumerated in increasing order for each $\Lambda$) are in general only continuous in $\Lambda$. However, for the disk the eigenvalue curves $\sigma_{(n)}^{(\Lambda)}$ (no longer ordered increasingly) are real-analytic whenever they are defined. This is a general property which holds if we agree to forsake the ordering of eigenvalues in increasing order.

\begin{theorem}\label{thm:analyticity}  For any bounded Lipschitz domain $\Omega\subset \mathbb{R}^d$, the union of spectra $\bigcup\limits_{\Lambda\in\mathbb{R}} \Spec\left(\DtN_\Lambda\right)$  can be decomposed into 
a family of real-analytic curves, each of them defined  either on a semi-infinite interval $\left(-\infty, \lambda^\Dir_k\right)$ or on a finite interval $\left(\lambda^\Dir_j, \lambda^\Dir_k\right)$ of the real line. 
If $\lambda_k^\Dir$ is an eigenvalue of the Dirichlet Laplacian of multiplicity $m$, then exactly $m$ analytic eigenvalue curves of $ \Spec\left(\DtN_\Lambda\right)$ tend to $-\infty$ as $\Lambda\to \left(\lambda_k^\Dir\right)^-$ and tend to $+\infty$ as $\Lambda\to \left(\lambda_k^\Dir\right)^+$.\end{theorem}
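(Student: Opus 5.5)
We will deduce the analyticity from analytic perturbation theory, working on each open interval $J$ between consecutive distinct Dirichlet eigenvalues (including $(-\infty,\lambda_1^\Dir)$). For $\Lambda\in J$ the family $\DtN_\Lambda$ is well defined. We aim to show it is a self-adjoint holomorphic family of type (B) in Kato's sense. The closed quadratic form is
\[
\mathfrak d_\Lambda[u]=\myscals{\mathcal E_\Lambda u,\mathcal E_\Lambda u}_\Lambda,
\]
with the $\Lambda$-independent form domain $H^{1/2}(\pa)$. For $\Lambda$ in a complex neighbourhood of $J$ avoiding $\Spec(-\Delta^\Dir_\Omega)$, the map $\Lambda\mapsto\mathcal E_\Lambda$ is holomorphic as a map $H^{1/2}(\pa)\to H^1(\Omega)$. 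To see this, write $\mathcal E_\Lambda u=\mathcal E_{\Lambda_0}u+(\Lambda-\Lambda_0)\bigl(-\Delta^\Dir_\Omega-\Lambda\bigr)^{-1}\mathcal E_{\Lambda_0}u$. This identity follows from the decomposition $H^1=\mathcal H_\Lambda\oplus H^1_0$: the difference of the two extensions lies in $H^1_0$ and solves $(-\Delta-\Lambda)V=(\Lambda-\Lambda_0)\mathcal E_{\Lambda_0}u$. The resolvent $(-\Delta^\Dir_\Omega-\Lambda)^{-1}:H^{-1}\to H^1_0$ is holomorphic off the Dirichlet spectrum. Hence $\mathfrak d_\Lambda$ is a holomorphic family of forms. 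It is real for real $\Lambda$, and it is uniformly sectorial and closed on $H^{1/2}(\pa)$ locally in $\Lambda$: the perturbation from $\mathfrak d_{\Lambda_0}$ is bounded on $H^{1/2}(\pa)$, and since $\mathcal E_{\Lambda_0}$ lands in $H^1$, the compact embedding $H^1(\Omega)\to L^2(\Omega)$ makes it relatively form-bounded with bound zero.

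Since the resolvent is compact, the Kato--Rellich theorem for type (B) families then gives eigenvalues and eigenprojections that can be chosen as real-analytic functions on all of $J$. Globality on $J$ holds because, for self-adjoint holomorphic families on a real interval, analytic branches continue through crossings and cannot escape to infinity in the interior of $J$, given the locally uniform lower bound. This produces the decomposition of $\bigcup_\Lambda\Spec(\DtN_\Lambda)$ over each $J$ into countably many real-analytic curves. Alternatively, and more in the spirit of the paper, one may use Theorem \ref{thm:Dmatrix}, the representation of $\DtN_\Lambda$ via $\DtN_{\Lambda_0}$ and the Dirichlet data. There $\DtN_\Lambda-\DtN_{\Lambda_0}$ is, off $\Spec(-\Delta^\Dir)$, a holomorphic family of $\DtN_{\Lambda_0}$-bounded (indeed form-compact) operators, which again gives a type (A)/(B) family.

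Next we analyse the endpoints. Near a Dirichlet eigenvalue $\lambda^\Dir_k$ of multiplicity $m$, the plan is to split off the singular part. Proposition \ref{prop:Elambda} suggests writing $\DtN_\Lambda=\frac{1}{\Lambda-\lambda_k^\Dir}P+R_\Lambda$. Here $P u=\sum_{j}\myscal{u,\partial_n U^\Dir_j}\partial_n U^\Dir_j$, summed over the $m$ eigenfunctions of $\lambda_k^\Dir$, is a rank-$m$ nonnegative operator, and $R_\Lambda$ is holomorphic across $\lambda_k^\Dir$. We check the sign via the quadratic form: differentiating, $\frac{\dr}{\dr\Lambda}\mathfrak d_\Lambda[u]=-\|\mathcal E_\Lambda u\|^2_{L^2(\Omega)}$, and the pole term contributes $\myscal{u,\partial_nU^\Dir}^2/(\Lambda-\lambda)$. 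Rank exactly $m$ requires the traces $\partial_n U^\Dir_j$ to be linearly independent. This follows from unique continuation: a Dirichlet eigenfunction with vanishing Neumann trace vanishes identically. We then multiply by $(\Lambda-\lambda_k^\Dir)$, so that $(\Lambda-\lambda_k^\Dir)\DtN_\Lambda=P+(\Lambda-\lambda_k^\Dir)R_\Lambda$. This extends holomorphically through $\lambda_k^\Dir$. At $\Lambda=\lambda_k^\Dir$ it has $m$ eigenvalues that are positive (the nonzero spectrum of $P$) and the rest equal to zero, still with $R_\Lambda$ unbounded.

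This endpoint step is the main obstacle. Making $P+(\Lambda-\lambda_k^\Dir)R_\Lambda$ rigorous as a holomorphic family is delicate, because $R_\Lambda$ is unbounded. We would handle it by a Schur complement / Feshbach reduction onto the $m$-dimensional range of $P$. On the complement of that range, the restriction of $\DtN_\Lambda$ is a holomorphic family through $\lambda_k^\Dir$, namely the operator $\Pi_{\mathcal K^\perp}\partial_n\mathcal E_\Lambda$ from \S\ref{subsec:DtNmap}, which yields branches staying finite. Dividing back, exactly $m$ branches behave like $\mu_j/(\Lambda-\lambda_k^\Dir)+O(1)$ with $\mu_j>0$. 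These tend to $-\infty$ from the left and $+\infty$ from the right, and all other branches extend analytically across $\lambda_k^\Dir$. Consequently the maximal domain of each curve is an interval between two Dirichlet eigenvalues, or starts at $-\infty$. Going to $-\infty$ from the left is consistent with Theorem \ref{thm:lambdadep}.
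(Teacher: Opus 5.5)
Your argument on the open intervals between consecutive Dirichlet eigenvalues is fine and is essentially the paper's: the paper reads off a type (A) family directly from Theorem \ref{thm:Dmatrix}, and your type (B) version is equivalent.

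\textbf{Continuation through $\lambda_k^\Dir$.} The gap is at the Dirichlet eigenvalues. Your step ``on $\mathcal K^\perp$ the restriction of $\DtN_\Lambda$ is holomorphic through $\lambda_k^\Dir$, which yields branches staying finite'' conflates two spectra. The compression $\Pi_{\mathcal K^\perp}\DtN_\Lambda\Pi_{\mathcal K^\perp}$ is holomorphic there, but its eigenvalues are not those of $\DtN_\Lambda$. The off-diagonal block $\Pi_{\mathcal K^\perp}R_\Lambda\Pi_{\mathcal K}$ does not vanish. Eliminating the $\mathcal K$-block leaves a $\sigma$-dependent operator on $\mathcal K^\perp$, i.e.\ a nonlinear two-parameter eigenvalue problem, to which Rellich--Kato does not directly apply. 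Moreover, on a Lipschitz domain $\mathcal K$ need not lie in the form domain $H^{1/2}(\pa)$: near a reentrant corner of angle $\alpha$ one typically has $\partial_n U^\Dir\sim|t|^{\pi/\alpha-1}$. So the orthogonal splitting does not even respect the form.

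The paper avoids the singular perturbation by exchanging the roles of the parameters. By Proposition \ref{prop:DtNRduality}, a branch that stays finite at $\lambda_k^\Dir$ corresponds, via $\sigma=-\gamma$, to the inverse of an analytic eigenvalue branch $\gamma\mapsto\lambda(\gamma)$ of $-\Delta^{\Rob,\gamma}$. This is a type (B) family on the fixed form domain $H^1(\Omega)$, in which nothing special happens at $\lambda_k^\Dir$. The inversion is legitimate because the slope $\|U\|^2_{L^2(\pa)}/\|U\|^2_{L^2(\Omega)}$ is nonzero by unique continuation. The count of curves falling to $-\infty$ from the left is Theorem \ref{thm:lambdadep}. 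On that side the $m$ escaping eigenvalues are separated from the rest, since by interlacing $\sigma^{(\Lambda)}_{m+1}$ stays bounded below, so it is the easy half.

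\textbf{The right-hand side.} Your assertion that $m$ branches behave like $\mu_j/(\Lambda-\lambda_k^\Dir)$ also as $\Lambda\to(\lambda_k^\Dir)^+$ cannot be obtained this way, and it holds only in special situations. For $\Lambda>\lambda_k^\Dir$ these values lie inside the spectrum of the $\mathcal K^\perp$-block, which is unbounded above, so the Schur complement gives no control. More precisely, for $\Lambda>\lambda_k^\Dir$ the perturbation $(\Lambda-\lambda_k^\Dir)^{-1}P\ge 0$ has rank $m$, so min-max gives $\rho_n(\Lambda)\le\sigma_n^{(\Lambda)}\le\rho_{n+m}(\Lambda)$. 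Here $\rho_1(\Lambda)\le\rho_2(\Lambda)\le\dots$ are the eigenvalues of $R_\Lambda$. Since $R_\Lambda$ is holomorphic through $\lambda_k^\Dir$, every \emph{ordered} eigenvalue stays bounded as $\Lambda\to(\lambda_k^\Dir)^+$. Hence an analytic branch can tend to $+\infty$ there only by crossing other branches exactly (at multiple eigenvalues) at infinitely many points accumulating at $\lambda_k^\Dir$.

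This happens for disks, balls and cuboids, where separation of variables decouples the branches. But if $\Spec(\DtN_\Lambda)$ is simple on some interval $(\lambda_k^\Dir,\lambda_k^\Dir+\epsilon)$, no analytic curve is born at $+\infty$ at $\lambda_k^\Dir$. That is the case of avoided crossings, which is what one expects without separable structure. So this half of your endpoint analysis is not a fixable technicality. The paper's own proof likewise justifies only the left-hand count, via Theorem \ref{thm:lambdadep}.
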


We postpone the proof of Theorem \ref{thm:analyticity} until \S\ref{sec:DtNDNL}.

\begin{remark} In practice, identifying individual analytic branches of eigenvalues, in particular in numerical examples, is highly non-trivial as one has to distinguish actual crossings of branches from avoided ``near-crossings",
see Figure \ref{fig:cross}. In reality, we are only able to fully identify all the branches for balls (see \S\ref{sec:disk}), annular domains, and cuboids (see \S\ref{sec:cuboids}). 
\end{remark}

\begin{figure}[!htbp]
\centering
\includegraphics{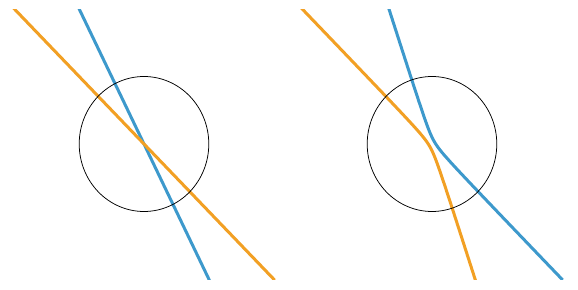}
\caption{A crossing (left) versus an avoided crossing (right).}
\label{fig:cross}
\end{figure}

We additionally have

\begin{proposition}\label{prop:dLambda} Let $\Omega\subset\mathbb{R}^d$ be a bounded Lipschitz domain, let $\Lambda_0\not\in\Spec\left(-\Delta^\Dir\right)$, and let us fix $k\in\mathbb{N}$. If $\sigma_{k}^{(\Lambda_0)}$ is a simple eigenvalue of $\DtN_{\Lambda_0}$, then
\begin{equation}\label{eq:firstder}
\left.\frac{\dr \sigma_k^{(\Lambda)}}{\dr \Lambda}\right|_{\Lambda=\Lambda_0}=-\left\|U_k^{(\Lambda_0)}\right\|^2_{L^2(\Omega)},
\end{equation}
and
\begin{equation}\label{eq:secondder}
\left.\frac{\dr^2 \sigma_k^{(\Lambda)}}{\dr \Lambda^2}\right|_{\Lambda=\Lambda_0}=-2\left(\myscal{\left(-\Delta^\Dir-\Lambda_0\right)^{-1}U_k^{(\Lambda_0)}, U_k^{(\Lambda_0)}}_{L^2(\Omega)}+\sum_{\substack{j\in\mathbb{N}\\j\ne k}} \frac{\myscal{U_j^{(\Lambda_0)}, U_k^{(\Lambda_0)}}^2_{L^2(\Omega)}}{\sigma_j^{(\Lambda_0)}-\sigma_{k}^{(\Lambda_0)}}\right).
\end{equation}
\end{proposition}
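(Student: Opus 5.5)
We treat this as analytic perturbation theory for the family $\Lambda\mapsto\DtN_\Lambda$ near $\Lambda_0$. By Theorem \ref{thm:analyticity}, and since $\sigma_k^{(\Lambda_0)}$ is simple, there is a real-analytic branch $\sigma(\Lambda)$ through $\sigma_k^{(\Lambda_0)}$. It comes with an analytic family of boundary eigenfunctions $u(\Lambda)$, normalised in $L^2(\pa)$, and bulk eigenfunctions $U(\Lambda)=\mathcal{E}_\Lambda u(\Lambda)$. For $\Lambda$ close to $\Lambda_0$ this branch coincides with $\sigma_k^{(\Lambda)}$, because simplicity is preserved by continuity. Rather than differentiating the nonlocal operator $\DtN_\Lambda$ directly, we work with the bulk problem.

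\emph{First derivative.} We differentiate $-\Delta U=\Lambda U$ and $\partial_n U=\sigma U$ in $\Lambda$, writing $\dot U=\partial_\Lambda U$. This gives
\[
-\Delta\dot U-\Lambda\dot U=U \text{ in }\Omega,\qquad \partial_n\dot U-\sigma\dot U=\dot\sigma U \text{ on }\pa.
\]
We pair the first equation with $U$ in $L^2(\Omega)$ and apply Green's formula. The interior terms cancel because $-\Delta U=\Lambda U$, leaving the boundary term
\[
\myscal{\partial_n U,\dot U}_{L^2(\pa)}-\myscal{\partial_n\dot U,U}_{L^2(\pa)}=-\dot\sigma\|u\|^2_{L^2(\pa)}=-\dot\sigma .
\]
This yields \eqref{eq:firstder}. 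Equivalently, we can apply the Feynman--Hellmann formula to $\myscal{\DtN_\Lambda u,u}$ together with \eqref{eq:quadform}, noting that $\partial_\Lambda[U,U]_\Lambda$ at fixed boundary data equals $-\|U\|^2$, since the variation of $U$ lies in $H^1_0$ and is $[\cdot,\cdot]_\Lambda$-orthogonal to $\mathcal{H}_\Lambda$.

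\emph{Second derivative.} From the first step, $\ddot\sigma=-2(U,\dot U)_{L^2(\Omega)}$, so we need $\dot U$ explicitly. We split $\dot U=W+V$, where $W=\mathcal{E}_{\Lambda_0}(\dot u)$ is $\Lambda_0$-harmonic and $V\in H^1_0(\Omega)$ solves $(-\Delta-\Lambda_0)V=U$. Hence $V=(-\Delta^\Dir-\Lambda_0)^{-1}U$, which accounts for the first term of \eqref{eq:secondder}.

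For $W$, we expand $\dot u=\sum_j c_j u_j^{(\Lambda_0)}$. Normalisation gives $c_k=\myscal{\dot u,u}_{L^2(\pa)}=0$. To find $c_j$ for $j\ne k$, we take the boundary equation and pair it with $u_j$:
\[
\myscal{\partial_n\dot U,u_j}_{L^2(\pa)}-\sigma_k c_j=0 .
\]
We then evaluate $\myscal{\partial_n\dot U,u_j}_{L^2(\pa)}$ by Green's formula against $U_j$. Using $(-\Delta-\Lambda_0)\dot U=U_k$ and $\partial_n U_j=\sigma_j u_j$, this gives
\[
\myscal{\partial_n\dot U,u_j}_{L^2(\pa)}=\sigma_j c_j-\myscal{U_k,U_j}_{L^2(\Omega)} .
\]
Combining the two relations, $c_j=\myscal{U_j,U_k}/(\sigma_j-\sigma_k)$. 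Therefore $(U,W)_{L^2(\Omega)}=\sum_{j\ne k}c_j\myscal{U_j,U_k}$, which is the sum in \eqref{eq:secondder}.

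\emph{Main obstacle.} The delicate step is rigour rather than algebra. We must justify the differentiability of $u(\Lambda)$, and the validity of Green's formula for $\dot U\in H^1$ with $\Delta\dot U\in L^2$ on a Lipschitz domain, using the $H^{-1/2}$–$H^{1/2}$ pairing. We must also justify the convergence of $\sum_j c_jU_j$ in $L^2(\Omega)$. For the latter, we bound $\|\mathcal{E}_{\Lambda_0}\|$ as an operator $L^2(\pa)\to L^2(\Omega)$, or argue via duality and the $H^1(\pa)$-regularity of $\dot u$. Alternatively, this justification can be obtained from the representation of $\DtN_\Lambda$ in Theorem \ref{thm:Dmatrix}.
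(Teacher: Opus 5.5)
Your proposal is correct and follows essentially the same route as the paper's proof in Appendix~B. Your equations for $\dot U$ are exactly the paper's first-order problem for $\tilde U_1$, and the splitting into $(-\Delta^\Dir-\Lambda_0)^{-1}U_k^{(\Lambda_0)}$ plus a $\Lambda_0$-harmonic part expanded in the $u_j^{(\Lambda_0)}$, with $c_j=\myscal{U_j^{(\Lambda_0)},U_k^{(\Lambda_0)}}_{L^2(\Omega)}/(\sigma_j^{(\Lambda_0)}-\sigma_k^{(\Lambda_0)})$ and $c_k=0$ from normalisation, is the paper's; the only minor difference is that you get $\ddot\sigma=-2\myscal{U,\dot U}_{L^2(\Omega)}$ by differentiating $\dot\sigma(\Lambda)=-\|U(\Lambda)\|^2_{L^2(\Omega)}$ (valid near $\Lambda_0$ since simplicity persists), whereas the paper extracts it from the solvability condition of the $\epsilon^2$-problem, so your version is slightly more economical.
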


The proof of Proposition \ref{prop:dLambda} can be found in Appendix \ref{sec:proofdLambda}. 

\begin{remark}\label{rem:dLambda}
Formula \eqref{eq:firstder} is well-known, see, e.g., \cite[Lemma 2.3]{Friedlander91}; however, we have failed to find \eqref{eq:secondder} in full generality in the literature, although an alternatively presented formula in the case $\Lambda_0=0$ can be found in \cite[Appendix B]{Grebenkov19c}. The results of Proposition \ref{prop:dLambda} remain valid for $\Lambda_0\in \Spec\left(-\Delta^\Dir\right)$ with minor notational adjustments and can be sometimes extended to the case of multiple eigenvalues (for example, for disks and balls where eigenfunctions do not depend on $\Lambda$). The first term in brackets in the right-hand side of  \eqref{eq:secondder} (the quadratic form of the Dirichlet resolvent) can be also written as 
\[
\left\|\left(-\Delta^\Dir-\Lambda_0\right)^{-1/2}U_k^{(\Lambda_0)}\right\|^2_{L^2(\Omega)}=\sum_{m=1}^\infty \frac{\myscal{U_k^{(\Lambda_0)}, U_m^\Dir}^2_{L^2(\Omega)}}{\lambda_m^\Dir-\Lambda_0}.
\]
Using this, we immediately conclude from \eqref{eq:secondder} that the principal eigenvalue $\sigma_1^{(\Lambda)}$ of $\DtN_\Lambda$ is a concave function of $\Lambda$ for $\Lambda\in\left(-\infty, \lambda_1^\Dir\right)$. 
\end{remark}

Theorem \ref{thm:lambdadep} has a number of important consequences. Let us introduce, for a self-adjoint semi-bounded below operator $A$ with a discrete spectrum, the \emph{eigenvalue counting function}
\begin{equation}\label{eq:NlambdaA}
\mathcal{N}_A(L):=\#\left(\Spec(A)\cap(-\infty, L]\right).
\end{equation}
We immediately have
\begin{corollary}[{\cite[Lemma 1.2]{Friedlander91}, \cite[Proposition 4]{Arendt12}}]\label{cor:Ndiff} 
Let $\Omega\subset\mathbb{R}^d$ be a bounded Lipschitz domain, and let $\Lambda\in\mathbb{R}$. The number of non-positive eigenvalues of the Dirichlet-to-Neumann map $\DtN_\Lambda$ equals the difference of the eigenvalue counting functions of the Neumann and Dirichlet Laplacians in $\Omega$ evaluated at $\Lambda$:
\begin{equation}\label{eq:NDtN0}
\mathcal{N}_{\DtN_\Lambda}(0)=\mathcal{N}_{-\Delta^\Neu}(\Lambda)-\mathcal{N}_{-\Delta^\Dir}(\Lambda).
\end{equation}
\end{corollary}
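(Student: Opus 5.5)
The plan is to count eigenvalue curves crossing the level $\sigma=0$ while $\Lambda$ increases, using Theorem \ref{thm:lambdadep}, or more transparently to argue through the Robin--Dirichlet-to-Neumann duality. I will use the Robin route, since it avoids having to track the $\sigma$-curves through the Dirichlet poles.

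First fix $\Lambda\notin\Spec(-\Delta^\Dir)$; the Dirichlet case is handled at the end. Consider the family of Robin operators $-\Delta^{\Rob,-s}$ for $s\in[0,\infty)$. Their eigenvalues $\lambda_j^{\Rob,-s}$ depend continuously on $s$, are non-increasing in $s$ (the form $\|\nabla W\|^2 - s\|W|_{\pa}\|^2$ is decreasing in $s$), and are in fact strictly decreasing. At $s=0$ they are the Neumann eigenvalues. By Proposition \ref{prop:DtNRduality}, with multiplicities, $\Lambda$ is an eigenvalue of $-\Delta^{\Rob,-s}$ exactly when $s\in\Spec(\DtN_\Lambda)$. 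So as $s$ runs over $[0,\infty)$, each Robin branch starting at or below $\Lambda$ can be tracked.

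The count goes as follows. The branches with $\lambda_j^{\Rob,0}=\lambda_j^\Neu\le\Lambda$ number $\mathcal{N}_{-\Delta^\Neu}(\Lambda)$. As $s\to+\infty$, all Robin eigenvalues tend to $-\infty$, since the number of them with a finite limit is zero. The cleaner way to organise the argument is therefore the following. The number of nonnegative eigenvalues $s\ge0$ of $\DtN_\Lambda$ with $s$ not too large equals the number of times the Robin branches pass through level $\Lambda$, which is awkward to control directly.

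To avoid this, I will instead work through Theorem \ref{thm:lambdadep} directly. Define $F(\Lambda):=\mathcal{N}_{\DtN_\Lambda}(0)$. For $\Lambda\to-\infty$, $\DtN_\Lambda$ is positive by \eqref{eq:quadform}, so $F=0$, which matches $\mathcal{N}^\Neu-\mathcal{N}^\Dir=0$ there. As $\Lambda$ increases, the eigenvalues decrease strictly, so $F$ jumps only when some $\sigma_k^{(\Lambda)}$ crosses $0$ downward. By the last statement of Theorem \ref{thm:lambdadep} and Proposition \ref{prop:DtNRduality}, this happens exactly at Neumann eigenvalues, with the jump equal to the multiplicity. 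Right-continuity follows from the use of $\le$ together with the strict decrease.

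At a Dirichlet eigenvalue of multiplicity $m$, Theorem \ref{thm:analyticity} gives that $m$ branches go to $-\infty$ from the left and reappear at $+\infty$ on the right. So $F$ drops by $m$, accounting for the $-\mathcal{N}^\Dir$ term. Summing the jumps then gives \eqref{eq:NDtN0}.

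The main obstacle is the bookkeeping at points where $\Lambda$ is simultaneously Neumann and Dirichlet, and the definition of $\DtN_\Lambda$ on $\mathcal{K}_\Lambda^\perp$ when $\Lambda$ is a Dirichlet eigenvalue. There I would verify the right-continuity convention by approximating $\Lambda$ from above. I would also check that the $m$ blown-up branches are exactly removed from $\Dom(\DtN_\Lambda)$ in that case, using Proposition \ref{prop:Elambda}.
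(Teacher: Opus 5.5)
Your committed argument is correct and is the crossing count the paper has in mind when it calls the corollary an immediate consequence of Theorem \ref{thm:lambdadep}. It runs as follows: $F(\Lambda)=0$ for $\Lambda<0$; $F$ jumps up by the Neumann multiplicity whenever the strictly decreasing eigenvalue curves cross $0$; and it jumps down by $m$ at each Dirichlet eigenvalue of multiplicity $m$, where $m$ curves escape to $-\infty$ and $m$ new ones enter from $+\infty$.

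As an aside, your Robin detour stalled only because of a sign slip. Non-positive $\sigma$ corresponds to the Robin parameter $\gamma=-\sigma\ge 0$, whereas your family $-\Delta^{\Rob,-s}$, $s\ge 0$, tracks the non-negative eigenvalues instead. For $\gamma\in[0,\infty)$, each ordered eigenvalue $\lambda_j^{\Rob,\gamma}$ increases strictly and continuously from $\lambda_j^\Neu$ towards $\lambda_j^\Dir$. It therefore meets the level $\Lambda$ exactly once if and only if $\lambda_j^\Neu\le\Lambda<\lambda_j^\Dir$, which gives \eqref{eq:NDtN0} directly without any bookkeeping at the Dirichlet poles.
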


In addition, we also have the following 
\begin{corollary}[{\cite{Friedlander91}, \cite{Arendt12}}]\label{cor:Nneg}
Let $\Omega\subset\mathbb{R}^d$, $d\ge 2$, be a bounded Lipschitz domain. Then the principal eigenvalue $\sigma_1^{(\Lambda)}$ of the Dirichlet-to-Neumann map is positive for $\Lambda<0$ and negative for $\Lambda>0$.  
\end{corollary}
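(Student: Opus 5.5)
The plan is to treat the two signs of $\Lambda$ separately. For $\Lambda<0$ we use the variational characterisation \eqref{eq:muk_minimaxk1}. For $\Lambda>0$ we combine Corollary \ref{cor:Ndiff} with the monotonicity in Theorem \ref{thm:lambdadep}.

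\emph{Case $\Lambda<0$.} Here $\Lambda<\lambda_1^\Dir(\Omega)$, so $\DtN_\Lambda$ is defined and \eqref{eq:muk_minimaxk1} applies. For any nonzero $U\in\mathcal{H}_\Lambda(\Omega)$ the numerator $\|\nabla U\|^2_{L^2(\Omega)}+|\Lambda|\,\|U\|^2_{L^2(\Omega)}$ is strictly positive. The minimum is attained at an eigenfunction, so $\sigma_1^{(\Lambda)}>0$. Equivalently, $\myscals{\cdot,\cdot}_\Lambda$ is an inner product for $\Lambda<0$, so $\myscal{\DtN_\Lambda u,u}_{L^2(\pa)}=\myscals{U,U}_\Lambda>0$ for every $u\neq0$.

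\emph{Case $\Lambda>0$.} If $\Lambda<\lambda_1^\Dir$, Proposition \ref{prop:sigma1upper} immediately gives $\sigma_1^{(\Lambda)}\le -\Lambda|\Omega|_d/|\pa|_{d-1}<0$.

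For general $\Lambda>0$ not in the Dirichlet spectrum, I would use \eqref{eq:NDtN0}. It suffices to show $\mathcal{N}_{-\Delta^\Neu}(\Lambda)-\mathcal{N}_{-\Delta^\Dir}(\Lambda)\ge1$, since then $\sigma_1^{(\Lambda)}\le0$. To upgrade this to a strict inequality, argue as follows: if $\sigma_1^{(\Lambda)}=0$, then $\Lambda$ is a Neumann eigenvalue. Since the $\sigma_k$ are strictly decreasing in $\Lambda$ on intervals avoiding the Dirichlet spectrum, applying \eqref{eq:NDtN0} at $\Lambda-\epsilon$ then shows that $\sigma_1^{(\Lambda)}<0$ anyway, as long as the counting inequality holds strictly below as well. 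The cleaner route is to prove the strict Friedlander-type inequality
\[
\mathcal{N}_{-\Delta^\Neu}(\Lambda)-\mathcal{N}_{-\Delta^\Dir}(\Lambda)\ge 1,
\]
and then note that $\sigma_1^{(\Lambda)}=0$ would force $\sigma_1^{(\Lambda')}<0$ for $\Lambda'$ slightly less than $\Lambda$. That is consistent, so we also need the count at $\Lambda$ itself to include a strictly negative eigenvalue.

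The cheapest way to get this is a direct test-function argument. Take $W=U_1^\Dir+c$ as in Friedlander's example, Remark \ref{rem:spaces}, or more generally $W$ in $\operatorname{span}\{1, U^\Dir_j:\lambda_j^\Dir<\Lambda\}$. Decompose $W=U+V$ with $U\in\mathcal{H}_\Lambda$, $V\in H^1_0$, using the decomposition proposition. The form $\myscals{\cdot,\cdot}_\Lambda$ is negative definite on this span, which has dimension $\mathcal{N}_{-\Delta^\Dir}(\Lambda)+1$. Indeed, the constant has form value $-\Lambda|\Omega|_d<0$. The Dirichlet modes with $\lambda_j^\Dir<\Lambda$ are mutually $\myscals{\cdot,\cdot}_\Lambda$-orthogonal with negative values. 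The cross terms with the constant are $-\Lambda\int U_j^\Dir$, and negativity of the full Gram matrix reduces, via a Schur complement, to
\[
\Lambda|\Omega|_d>\sum_{\lambda_j^\Dir<\Lambda}\frac{\Lambda^2(\int U^\Dir_j)^2}{\Lambda-\lambda_j^\Dir}.
\]

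This inequality is the main obstacle. For $d\ge2$ I would instead invoke the strict inequality $\lambda_{k+1}^\Neu<\lambda_k^\Dir$ (Friedlander/Filonov for Lipschitz domains) directly. It gives $\mathcal{N}_{-\Delta^\Neu}(\Lambda)\ge\mathcal{N}_{-\Delta^\Dir}(\Lambda)+1$, with strictness available by applying it at $\Lambda-\epsilon$. Since the $H^1_0$-part only contributes the Dirichlet count, the number of negative eigenvalues of $\DtN_\Lambda$ is at least $1$, hence $\sigma_1^{(\Lambda)}<0$.

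The case $\Lambda\in\Spec(-\Delta^\Dir)$ follows by the same negative-subspace count restricted to $\mathcal{K}^\perp_\Lambda$, or by continuity and monotonicity from the left. The hypothesis $d\ge2$ enters exactly through the Filonov inequality; it fails for intervals, where $\sigma_\sr^{(\Lambda)}$ changes sign at each Dirichlet eigenvalue.
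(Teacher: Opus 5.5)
Your $\Lambda<0$ argument is correct, and it is more direct than the paper's proof, which reads this case off Corollary~\ref{cor:Ndiff}: both counting functions vanish for $\Lambda<0$.

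For $\Lambda>0$, your final route (Corollary~\ref{cor:Ndiff} plus the Neumann--Dirichlet inequality) does work, but only if you cite Filonov~\cite{Filonov2003}. Friedlander's proof of $\lambda^\Neu_{k+1}\le\lambda^\Dir_k$ goes through this very corollary, and the paper itself derives $\lambda^\Neu_{k+1}<\lambda^\Dir_k$ from Corollaries~\ref{cor:Ndiff} and~\ref{cor:Nneg}. Attributing the inequality to Friedlander therefore makes your argument circular. Even with Filonov, you are importing a statement essentially equivalent to the one you want.

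The paper's proof is self-contained and uses exactly the idea your direct attempt lacked. It tests \eqref{eq:muk_minimaxk1} with the plane wave $U=\er^{\ir\mydotp{\omega,x}}$, $|\omega|^2=\Lambda$. This $U$ lies in $\mathcal{H}_\Lambda(\Omega)$ and has $\myscals{U,U}_\Lambda=(|\omega|^2-\Lambda)|\Omega|_d=0$, so $\sigma_1^{(\Lambda)}\le 0$. Equality would make it (or its real and imaginary parts) an eigenfunction for the eigenvalue $0$, i.e.\ $\partial_n U=\ir\mydotp{\omega,n}U\equiv 0$ on $\pa$, which is impossible. Filonov's proof is this same plane-wave argument.

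Two points in your write-up need correcting.

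First, the constant is the wrong test function. It is not $\Lambda$-harmonic, so it has non-zero cross terms $-\Lambda\int_\Omega U^\Dir_j$ with the Dirichlet modes. Your Schur-complement inequality is then genuinely false in general: as $\Lambda\searrow\lambda_1^\Dir$, the term $\Lambda^2\bigl(\int_\Omega U^\Dir_1\bigr)^2/(\Lambda-\lambda_1^\Dir)$ blows up because $U^\Dir_1>0$. So you should not assert that the form is negative definite on $\operatorname{span}\{1,U^\Dir_j\}$. A plane wave, being $\Lambda$-harmonic, is $\myscals{\cdot,\cdot}_\Lambda$-orthogonal to all of $H^1_0(\Omega)$; that is why it works, and with it you do not need the Dirichlet modes at all.

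Second, you have the monotonicity backwards. $\sigma_1^{(\Lambda)}$ is decreasing in $\Lambda$, so $\sigma_1^{(\Lambda)}=0$ would give $\sigma_1^{(\Lambda')}>0$ for $\Lambda'<\Lambda$ close to $\Lambda$. That contradicts $\mathcal{N}_{\DtN_{\Lambda'}}(0)\ge 1$, so it is not ``consistent''; it is precisely the contradiction that yields strictness. Consequently:
\begin{itemize}
\item For $\Lambda\notin\Spec\left(-\Delta^\Dir_\Omega\right)$, the non-strict inequality $\lambda^\Neu_{k+1}\le\lambda^\Dir_k$ already suffices.
\item The strict inequality, and hence $d\ge 2$, is needed only at Dirichlet eigenvalues. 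There, just below an eigenvalue of multiplicity $m$, you need $m+1$ non-positive eigenvalues rather than one.
\item Your remark on intervals should be corrected accordingly. The conclusion holds there for every $\Lambda>0$ off the Dirichlet spectrum, and fails only at $\Lambda=\lambda^\Dir_k$, where $\sigma_1^{(\Lambda)}=0$.
\end{itemize}
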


\begin{proof} The first statement follows directly from Corollary \ref{cor:Ndiff}: if $\Lambda<0$, then $\mathcal{N}_{\DtN_\Lambda}(0)=0$. To prove the second statement, 
assume that $\Lambda\notin\Spec\left(-\Delta^\Dir_\Omega\right)$ and use the variational principle \eqref{eq:muk_minimaxk1} with the test function $U=\er^{\ir\mydotp{\omega, x}}$, where $\omega\in\mathbb{R}^d$ and $|\omega|^2=\Lambda$, so that $U\in\mathcal{H}_\Lambda(\Omega)$, which yields $\sigma_1^{(\Lambda)}\le [U,U]_\Lambda=0$. Since $\partial_n U=\ir \mydotp{\omega, n}U$ cannot vanish identically on the boundary, it follows that $\sigma_1^{(\Lambda)}<0$. The case when $\Lambda$ is a Dirichlet eigenvalue is dealt with similarly, with some minor modifications.   
\end{proof} 

Furthermore, we can quantify the Robin--Dirichlet-to-Neumann duality of Proposition \ref{prop:DtNRduality}, following  \cite[Proposition 5]{Arendt12} and \cite[Proposition 2.7]{Hassannezhad22}. 

\begin{proposition}\label{prop:dualityfurther} 
Let $\Omega\subset\mathbb{R}^d$, $d\ge 2$, be a bounded Lipschitz domain.  Assume that $\Lambda\not\in\Spec\left(-\Delta^\Dir_\Omega\right)$, and let $m:=\mathcal{N}_{-\Delta^\Dir_\Omega}(\Lambda)$ be the number of the Dirichlet eigenvalues less than $\Lambda$. Let also $\sigma=\sigma_k^{(\Lambda)}(\Omega)$, $k\in\mathbb{N}$. Then $\Lambda=\lambda^{\Rob, -\sigma}_{k+m}(\Omega)$.
\end{proposition}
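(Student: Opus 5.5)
Fix $\Lambda\notin\Spec(-\Delta^\Dir_\Omega)$ and $k$, and set $\sigma=\sigma_k^{(\Lambda)}$, $\gamma=-\sigma$. By Proposition \ref{prop:DtNRduality} we already know that $\Lambda\in\Spec(-\Delta^{\Rob,-\sigma})$. What remains is to identify its index. Because of multiplicities, we will show the following. Let $p:=\#\{j:\sigma_j^{(\Lambda)}<\sigma\}$ and let $q$ be the multiplicity of $\sigma$. Then the number of Robin eigenvalues $\lambda^{\Rob,-\sigma}_i$ strictly less than $\Lambda$ equals $p+m$. Since the multiplicity of $\Lambda$ as a Robin eigenvalue is also $q$ by Proposition \ref{prop:DtNRduality}, the indices $k+m$ with $p<k\le p+q$ then all give $\lambda^{\Rob,-\sigma}_{k+m}=\Lambda$. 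So everything reduces to a counting identity
\[
\#\{i:\lambda_i^{\Rob,-\sigma}<\Lambda\} = \#\{j:\sigma_j^{(\Lambda)}<\sigma\}+\mathcal{N}_{-\Delta^\Dir}(\Lambda).
\]

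To prove this identity I will compare two quadratic forms on $H^1(\Omega)$. The first is the Robin form shifted by $\Lambda$:
\[
q(W):=\|\nabla W\|^2_{L^2(\Omega)}-\Lambda\|W\|^2_{L^2(\Omega)}-\sigma\|W|_{\pa}\|^2_{L^2(\pa)}=\myscals{W,W}_\Lambda-\sigma\|W|_{\pa}\|^2 .
\]
Viewed relative to $L^2(\Omega)$, its number of negative directions (the maximal dimension of a subspace on which $q<0$) equals $\#\{i:\lambda_i^{\Rob,-\sigma}<\Lambda\}$; this is the standard minimax/Glazman lemma. Now use the decomposition $H^1(\Omega)=\mathcal{H}_\Lambda(\Omega)\oplus H^1_0(\Omega)$ from the Proposition above. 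It is $\myscals{\cdot,\cdot}_\Lambda$-orthogonal, and the boundary term vanishes on $H^1_0$. Hence for $W=U+V$ we get $q(W)=q(U)+\myscals{V,V}_\Lambda$. This means $q$ splits as an orthogonal direct sum of two forms, one on each summand.

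On $\mathcal{H}_\Lambda(\Omega)$, by \eqref{eq:quadform} we have $q(U)=\myscal{(\DtN_\Lambda-\sigma)u,u}_{L^2(\pa)}$ with $u=U|_{\pa}$. Expanding $u$ in the eigenbasis of $\DtN_\Lambda$, the negative index of this form is exactly $p$. On $H^1_0(\Omega)$, $\myscals{V,V}_\Lambda$ is the Dirichlet form minus $\Lambda$. Expanding in Dirichlet eigenfunctions, its negative index is $m$, and it is nondegenerate because $\Lambda\notin\Spec(-\Delta^\Dir)$. The negative index of an orthogonal sum of forms is the sum of the negative indices, which gives $p+m$.

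The main obstacle is the first step: the index of $q$ must be computed relative to the $L^2(\Omega)$ norm, whereas the decomposition is not $L^2$-orthogonal. I would handle this purely algebraically. The negative index defined as the maximal dimension of a subspace where $q<0$ does not depend on any norm. For the Robin operator, this maximal dimension equals the number of eigenvalues below $\Lambda$: the span of those eigenfunctions gives a subspace of that dimension on which $q<0$, and the minimax principle rules out any larger one. The same argument applies on each summand, using the discreteness of the spectra of $\DtN_\Lambda$ and $-\Delta^\Dir$. Additivity of the index under a $q$-orthogonal splitting is elementary: if a subspace on which $q<0$ were larger than $p+m$, it would meet nontrivially the $q$-orthogonal sum of the nonnegative parts of the two forms, where $q\ge 0$, a contradiction.
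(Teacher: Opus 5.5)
Your proof is correct, and it takes a different route from the one the paper relies on. The paper gives no argument of its own: it quotes the result from \cite[Proposition 5]{Arendt12} and \cite[Proposition 2.7]{Hassannezhad22}. The route in that literature (cf.\ the comment after Theorem \ref{thm:lambdadep}) is a Friedlander-type monotonicity argument. One fixes $\Lambda$ and lets the Robin parameter $\gamma$ decrease from $+\infty$ to $-\sigma$. Each $\lambda_j^{\Rob,\gamma}$ is continuous and strictly increasing in $\gamma$, and tends to $\lambda_j^\Dir$ as $\gamma\to+\infty$, so exactly $m$ Robin eigenvalues lie below $\Lambda$ at the start. By duality, a branch crosses the level $\Lambda$ exactly when $-\gamma\in\Spec(\DtN_\Lambda)$, counted with multiplicity, and counting crossings gives the index $k+m$.

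You replace this dynamic bookkeeping with a static index count. The $\myscals{\cdot,\cdot}_\Lambda$-orthogonal splitting $H^1(\Omega)=\mathcal H_\Lambda(\Omega)\oplus H^1_0(\Omega)$ block-diagonalises the shifted Robin form, and Glazman's lemma on each block does the rest. This needs only min--max and the decomposition already proved in \S\ref{sec:varpr}. You need no continuity or strict monotonicity in $\gamma$ (the latter rests on unique continuation) and no $\gamma\to+\infty$ limit. It also shows transparently where the shift $m$ comes from: the negative directions of $\myscals{\cdot,\cdot}_\Lambda$ on $H^1_0(\Omega)$, which is exactly the phenomenon in Remark \ref{rem:spaces}.

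Your count also works for every real $\sigma$, not only eigenvalues. Running it with ``$\le 0$'' in place of ``$<0$'' gives $\#\{i:\lambda_i^{\Rob,-\sigma}\le\Lambda\}=\mathcal N_{\DtN_\Lambda}(\sigma)+m$; on $H^1_0(\Omega)$ the two indices agree because $\Lambda\notin\Spec(-\Delta^\Dir_\Omega)$. Together the two counts recover the multiplicity part of Proposition \ref{prop:DtNRduality} that you invoked, as well as Corollary \ref{cor:Ndiff} (take $\sigma=0$) and \eqref{eq:NDtNint}. So your argument can be made entirely self-contained.

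The monotonicity route, in exchange, produces the global picture of eigenvalue branches as functions of the parameter that underlies Theorem \ref{thm:lambdadep}. It also extends more naturally to $\Lambda\in\Spec(-\Delta^\Dir_\Omega)$, where your direct-sum decomposition is no longer available.
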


Similarly to Corollary \ref{cor:Nneg}, we can obtain the expression for a number of eigenvalues of $\DtN_\Lambda$ in any fixed interval $(S_1, S_2]$ of the real line, this time expressed in terms of the Robin counting functions. 

\begin{corollary}
Let $\Omega\subset\mathbb{R}^d$ be a bounded  Lipschitz domain, and let $\Lambda\in\mathbb{R}$ and $S_1<S_2$. Then
\begin{equation}\label{eq:NDtNint}
\#\left(\Spec\left(\DtN_\Lambda\right)\cap(S_1, S_2]\right)=\mathcal{N}_{-\Delta^{\Rob,-S_2}}(\Lambda)-\mathcal{N}_{-\Delta^{\Rob,-S_1}}(\Lambda).
\end{equation}
\end{corollary}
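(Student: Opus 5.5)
The plan is to deduce \eqref{eq:NDtNint} from Proposition \ref{prop:dualityfurther} together with monotonicity of Robin eigenvalues in the Robin parameter. First assume $\Lambda\notin\Spec\left(-\Delta^\Dir_\Omega\right)$ and set $m:=\mathcal{N}_{-\Delta^\Dir_\Omega}(\Lambda)$. The left-hand side equals $\mathcal{N}_{\DtN_\Lambda}(S_2)-\mathcal{N}_{\DtN_\Lambda}(S_1)$. It therefore suffices to prove the one-sided identity
\[
\mathcal{N}_{\DtN_\Lambda}(S)=\mathcal{N}_{-\Delta^{\Rob,-S}}(\Lambda)-m\qquad\text{for every } S\in\mathbb{R}.
\]
Subtracting this for $S=S_2$ and $S=S_1$ removes $m$ and gives the claim. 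For $S=0$ this is exactly Corollary \ref{cor:Ndiff}.

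To prove the one-sided identity, I would use the fact that for each fixed $j$ the Robin eigenvalue $\lambda_j^{\Rob,\gamma}$ is continuous and non-decreasing in $\gamma$. It tends to $\lambda_j^\Dir$ as $\gamma\to+\infty$ and to $-\infty$ as $\gamma\to-\infty$. This follows from the min-max principle, since the quadratic form $\|\nabla U\|^2+\gamma\|U|_{\pa}\|^2$ is monotone in $\gamma$. The limits are standard, and strict monotonicity in $\gamma$ is the monotonicity property the paper invokes for Theorem \ref{thm:lambdadep}.

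Next fix $k$ and put $\sigma=\sigma_k^{(\Lambda)}$. Proposition \ref{prop:dualityfurther} gives $\lambda_{k+m}^{\Rob,-\sigma}=\Lambda$. Monotonicity in the parameter $-S$ then gives the key equivalence
\[
\sigma_k^{(\Lambda)}\le S \iff \lambda_{k+m}^{\Rob,-S}\le \lambda_{k+m}^{\Rob,-\sigma}=\Lambda .
\]
Here the direction ``$\Leftarrow$'' needs strict monotonicity. The case where $\lambda_{k+m}^{\Rob,\gamma}$ is constant on an interval is ruled out by unique continuation: an eigenfunction with $\lambda$ fixed on a $\gamma$-interval would have zero trace, hence zero normal derivative, and so would vanish.

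Finally, I count indices. The number of $k\ge1$ with $\sigma_k^{(\Lambda)}\le S$ equals the number of $j\ge m+1$ with $\lambda_j^{\Rob,-S}\le\Lambda$. It remains to check that all of $\lambda_1^{\Rob,-S},\dots,\lambda_m^{\Rob,-S}$ are at most $\Lambda$. Applying Proposition \ref{prop:dualityfurther} with $k=1$ gives $\lambda_{m+1}^{\Rob,-\sigma_1}=\Lambda$. Strict monotonicity together with the Dirichlet limit shows that the curve $\gamma\mapsto\lambda_m^{\Rob,\gamma}$ stays below $\lambda_m^\Dir<\Lambda$ for all $\gamma$. Hence $\lambda_j^{\Rob,-S}\le\lambda_m^{\Rob,-S}<\Lambda$ for $j\le m$, so these $m$ eigenvalues are always counted. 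This yields the one-sided identity.

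The remaining case $\Lambda\in\Spec\left(-\Delta^\Dir_\Omega\right)$ is the step I expect to be the main obstacle. Here I would argue by continuity, perturbing $\Lambda$ slightly. Alternatively, I would adapt Proposition \ref{prop:dualityfurther} to the restricted domain $H^1(\pa)\cap\mathcal{K}^\perp_\Lambda$, tracking how the Dirichlet eigenfunctions, which are Robin eigenfunctions for no finite $\gamma$, shift the index by $\mathcal{N}_{-\Delta^\Dir}$ evaluated with the correct one-sided convention. This index bookkeeping is where the care lies. In the difference of the two counting functions, the Dirichlet contribution cancels anyway, which is why I expect the formula to survive unchanged.
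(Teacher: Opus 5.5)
Your argument for $\Lambda\notin\Spec\left(-\Delta^\Dir_\Omega\right)$ is correct, and it is the route the paper has in mind. The corollary is presented as a direct consequence of Proposition \ref{prop:dualityfurther} and the strict monotonicity of $\gamma\mapsto\lambda_j^{\Rob,\gamma}$. Your key equivalence, the index shift by $m$, and its cancellation in the difference are all fine.

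The genuine gap is the case $\Lambda\in\Spec\left(-\Delta^\Dir_\Omega\right)$. The statement covers it, but you leave it as two options you do not carry out. Proposition \ref{prop:dualityfurther} is not available there. The perturbation route is also less innocent than it looks. At a Dirichlet eigenvalue, $\DtN_\Lambda$ is a different operator: it is restricted to $H^1(\pa)\cap\mathcal{K}^\perp_\Lambda$ and composed with a projection. To pass to the limit in $\mathcal{N}_{\DtN_{\Lambda'}}(S)$ as $\Lambda'\to\Lambda$, you would need its spectrum to be the limit of the spectra of $\DtN_{\Lambda'}$ with the singular branches discarded. You would also need a one-sided limit $\Lambda'\downarrow\Lambda$ to handle eigenvalues sitting exactly at $S$. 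That is essentially Theorem \ref{thm:analyticity}, whose proof at $\lambda_k^\Dir$ itself relies on the Robin duality at that point.

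The simplest repair is to bypass Proposition \ref{prop:dualityfurther} and count level crossings, using Proposition \ref{prop:DtNRduality}, which is stated for all real $\Lambda$.
\begin{itemize}
\item You can check the duality directly at a Dirichlet eigenvalue. Green's formula puts the trace of any Robin eigenfunction with eigenvalue $\Lambda$ into $\mathcal{K}^\perp_\Lambda$. Conversely, if $\Pi_{\mathcal{K}^\perp_\Lambda}\partial_n U=\sigma u$, then $\partial_n U-\sigma u=\partial_n V$ for some $V\in\mathcal{H}_{\Lambda,0}$, and $U-V$ is a Robin eigenfunction with trace $u$. Unique continuation makes the trace map injective on Robin eigenfunctions, so multiplicities match.
\item Counting with multiplicities, $\mathcal{N}_{\DtN_\Lambda}(S)$ is the sum of $\dim\ker\left(-\Delta^{\Rob,\gamma}-\Lambda\right)$ over $\gamma\ge -S$. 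This is the number of indices $j$ with $\lambda_j^{\Rob,\gamma}=\Lambda$ for some $\gamma\ge -S$; strict monotonicity ensures each $j$ is counted at most once.
\item The curve $\gamma\mapsto\lambda_j^{\Rob,\gamma}$ is continuous, strictly increasing, stays strictly below $\lambda_j^\Dir$, and tends to $\lambda_j^\Dir$ as $\gamma\to+\infty$. So such a $\gamma$ exists if and only if $\lambda_j^{\Rob,-S}\le\Lambda<\lambda_j^\Dir$.
\end{itemize}
Hence
\[
\mathcal{N}_{\DtN_\Lambda}(S)=\mathcal{N}_{-\Delta^{\Rob,-S}}(\Lambda)-\mathcal{N}_{-\Delta^\Dir}(\Lambda)\qquad\text{for all }S,\Lambda\in\mathbb{R},
\]
with the non-strict convention \eqref{eq:NlambdaA}. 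The Robin curves with $\lambda_j^\Dir=\Lambda$ approach $\Lambda$ from below without reaching it, so they belong to the shift, and the shift cancels between $S_2$ and $S_1$.

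This version genuinely uses the limit $\lambda_j^{\Rob,\gamma}\to\lambda_j^\Dir$, to ensure every curve with $\lambda_j^\Dir>\Lambda$ actually crosses the level $\Lambda$. In your generic argument that crossing was supplied by Proposition \ref{prop:dualityfurther}. The crossing count treats all $\Lambda$ uniformly, so your case split becomes unnecessary.
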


We refer to \cite{Hassell17} for some related results concerning the asymptotics of  \eqref{eq:NDtNint} as $\Lambda \to +\infty$, with $S_j = s_j \Lambda$, and $s_1<s_2$ fixed.  See also \cite{Rudnick} for related bounds on the gaps between the corresponding Robin and Neumann eigenvalues.

Note also that \eqref{eq:NDtNint} formally yields  \eqref{eq:NDtN0} if we take in the former $S_1=-\infty$ and $S_2=0$ and use the facts that $-\Delta^{\Rob,0}=-\Delta^\Neu$ and $-\Delta^{\Rob,+\infty}=-\Delta^\Dir$.
Corollaries \ref{cor:Ndiff} and \ref{cor:Nneg} imply that for any bounded domain $\Omega\subset\mathbb{R}^d$, $d\ge 2$, and any $k\in\mathbb{N}$, we have 
\[
\lambda^\Neu_{k+1}(\Omega)<\lambda^\Dir_k(\Omega),
\]
see \cite{Filonov2003} for an alternative proof not involving the Dirichlet-to-Neumann map. This inequality can be improved to 
$\lambda^\Neu_{k+d}<\lambda^\Dir_k$ for \emph{convex} bounded domains in $\mathbb{R}^d$ \cite{Levine1986}, as well as for all simply-connected domains in the case $d=2$ \cite{Rohleder}. It is conjectured (see \cite[Conjecture 3.2.42]{Levitin}) that such an improvement holds for all bounded Euclidean domains.

\subsection{Dirichlet-to-Neumann map and the Dirichlet Laplacian}\label{sec:DtNDNL}

There are further relations between the Dirichlet-to-Neumann maps and the Laplacian with Dirichlet (or Neumann) boundary conditions  in $\Omega\subset\mathbb{R}^d$. In particular, we  
have the following

\begin{theorem}[{\cite{Behrndt15}}]\label{thm:DtNdiff}
Let $\Omega$ be  a bounded Lipschitz domain,  $\Lambda, \Lambda_0\in\mathbb{R}\setminus\Spec\left(-\Delta^\Dir\right)$, and $u\in H^{1/2}(\pa)$. Then
\begin{equation}\label{eq:Ddiff}
\left(\mathcal{D}_{\Lambda}-\mathcal{D}_{\Lambda_0}\right)u=(\Lambda - \Lambda_0)
\partial_n \left(\left(-\Delta^\Dir-\Lambda\right)^{-1}\mathcal{E}_{\Lambda_0}u \right).
\end{equation}
\end{theorem}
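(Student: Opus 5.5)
The plan is to compare the two extensions $U:=\mathcal{E}_\Lambda u$ and $U_0:=\mathcal{E}_{\Lambda_0}u$ directly. They share the boundary trace $u$, so their difference
\[
W:=\mathcal{E}_\Lambda u-\mathcal{E}_{\Lambda_0}u
\]
lies in $H^1_0(\Omega)$. By linearity of $\partial_n$ we have $\left(\DtN_\Lambda-\DtN_{\Lambda_0}\right)u=\partial_n W$. It therefore suffices to identify $W$ as $(\Lambda-\Lambda_0)\left(-\Delta^\Dir-\Lambda\right)^{-1}U_0$.

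\textbf{Computing the equation for $W$.} In the distributional sense,
\[
-\Delta W-\Lambda W=\Lambda U-\Lambda_0 U_0-\Lambda U+\Lambda U_0=(\Lambda-\Lambda_0)U_0 .
\]
Since $U_0\in H^1(\Omega)\subset L^2(\Omega)$, $W\in H^1_0(\Omega)$, and $\Lambda\notin\Spec\left(-\Delta^\Dir\right)$, the function $W$ is a weak solution of the Dirichlet problem. That is, $W$ satisfies $\myscals{W,\Phi}_\Lambda=(\Lambda-\Lambda_0)\myscal{U_0,\Phi}_{L^2(\Omega)}$ for all $\Phi\in H^1_0(\Omega)$. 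The operator $\left(-\Delta^\Dir-\Lambda\right)^{-1}$ is the unique such solution map $L^2(\Omega)\to H^1_0(\Omega)$, so uniqueness gives $W=(\Lambda-\Lambda_0)\left(-\Delta^\Dir-\Lambda\right)^{-1}U_0$. Taking normal derivatives then yields \eqref{eq:Ddiff}.

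\textbf{Making sense of the normal derivative.} This is the step I expect to need care. On a Lipschitz domain, $\partial_n$ is defined on $H^1(\Omega)$ functions with $\Delta$ in $L^2(\Omega)$ (or in the dual of $H^1$) through Green's formula, as in \cite{Chandler12}. Here $W\in H^1_0(\Omega)$ and $\Delta W=-\Lambda W-(\Lambda-\Lambda_0)U_0\in L^2(\Omega)$, so $\partial_n W\in H^{-1/2}(\pa)$ is well defined. It is characterised by
\[
\langle\partial_n W,\phi|_{\pa}\rangle=\myscal{\nabla W,\nabla\Phi}+\myscal{\Delta W,\Phi}\qquad\text{for } \Phi\in H^1(\Omega).
\]
The same weak definition applies to $U$ and $U_0$, whose Laplacians are $-\Lambda U$ and $-\Lambda_0 U_0$. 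Because both sides of this characterisation are linear in the function, it is linear, and this justifies $\partial_n U-\partial_n U_0=\partial_n W$ rigorously.

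As a consistency check, one could also pair \eqref{eq:Ddiff} with $v\in H^{1/2}(\pa)$ and use \eqref{eq:bilform} together with the decomposition $H^1=\mathcal{H}_\Lambda\oplus H^1_0$. Alternatively, one could compare coefficients via the expansion in Proposition \ref{prop:Elambda}, since the resolvent identity $\frac{1}{\Lambda-\lambda_m}-\frac{1}{\Lambda_0-\lambda_m}=\frac{(\Lambda-\Lambda_0)}{(\lambda_m-\Lambda)(\Lambda_0-\lambda_m)}$ matches exactly. The direct argument above suffices, however.
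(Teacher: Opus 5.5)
Your proposal is correct and follows essentially the same route as the paper: set $W=\mathcal{E}_\Lambda u-\mathcal{E}_{\Lambda_0}u\in H^1_0(\Omega)$, observe that $-\Delta W-\Lambda W=(\Lambda-\Lambda_0)U_0$, identify $W=(\Lambda-\Lambda_0)\left(-\Delta^\Dir-\Lambda\right)^{-1}U_0$, and take normal derivatives. Your extra justification of the weak normal derivative via Green's formula makes explicit what the paper leaves implicit. There, $\phi|_{\pa}$ should read $\Phi|_{\pa}$.
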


\begin{proof} Let us denote $U:=\mathcal{E}_{\Lambda} u$, $U_0:=\mathcal{E}_{\Lambda_0} u$,  and $W:=U-U_0$. Then $\left(\mathcal{D}_{\Lambda}-\mathcal{D}_{\Lambda_0}\right)u=\partial_n W$. 
At the same time, $W$ satisfies
\begin{equation}\label{eq:Wdiff}
\begin{cases}
-\Delta W -\Lambda W = (\Lambda-\Lambda_0) U_0\qquad&\text{in }\Omega,\\
W=0\qquad&\text{on }\pa.
\end{cases}
\end{equation}
Therefore $W= (\Lambda-\Lambda_0)\left(-\Delta^\Dir-\Lambda\right)^{-1}U_0$, and \eqref{eq:Ddiff} follows. 
\end{proof}

We refer to \cite{Behrndt15} for  further generalisations, including the removal of the condition $\Lambda, \Lambda_0\not\in\Spec\left(-\Delta^\Dir\right)$.

Additionally, there is an important class of results, known as \emph{Krein-type resolvent formulae}, which relate the difference $(-\Delta^\Neu-\Lambda)^{-1}-(-\Delta^\Dir-\Lambda)^{-1}$ of the resolvents of the Neumann and the Dirichlet Laplacian with the Dirchlet-to-Neumann map $\DtN_\Lambda$, its inverse (the \emph{Neumann-to-Dirichlet map}) $\DtN_\Lambda^{-1}$, the $\Lambda$-harmonic extension operator $\mathcal{E}_\Lambda$ and its Neumann analogue, see \cite{Behrndt15} and references therein.

Let $\Omega\subset\mathbb{R}^d$ be a bounded domain with Lipschitz boundary. Suppose we know \emph{all} the eigenvalues $\sigma_j^{(\Lambda_0)}$ and orthonormalised eigenfunctions, denoted for brevity $v_j:=u_j^{(\Lambda_0)}$, of the Dirichlet-to-Neumann map $\DtN_{\Lambda_0}$ for some $\Lambda_0\in\mathbb{R}\setminus\Spec\left(-\Delta^\Dir_\Omega\right)$. We aim to write down \emph{explicitly}, for any $\Lambda\in\mathbb{R}$,  the action of the Dirichlet-to-Neumann map $\DtN_{\Lambda}$ in the basis $\left\{v_j\right\}_{j=1}^\infty$, that is, to compute the semi-infinite matrix 
\[
\mathtt{D}_\Lambda := \left(\mathtt{d}_{\Lambda; i,j}\right)_{i,j=1}^\infty:=\left(\myscal{\DtN_\Lambda v_i, v_j}_{L^2(\pa)}\right)_{i,j=1}^\infty.
\]
It turns out that we can do this explicitly, if we additionally assume that we know all the eigenvalues $\lambda_k^\Dir$ and the corresponding eigenfunctions $U_k^\Dir$ of the Dirichlet Laplacian $-\Delta^\Dir_\Omega$.
The result additionally separates, in a sense, the dependence of $\mathtt{D}_\Lambda$ on the geometry and on varying values of $\Lambda$.

Let us define the semi-infinite matrices
\[
\mathtt{A}:= \left(\mathtt{a}_{i,k}\right)_{i,k=1}^\infty:=\left(\myscal{v_i, \partial_n U_k^\Dir}_{L^2(\partial\Omega)}\right)_{i,k=1}^\infty,
\]
and
\[
\mathtt{B}_\Lambda:= \left(\mathtt{b}_{\Lambda;k}\delta_{i,k}\right)_{i,k=1}^\infty:=\left(\frac{\Lambda_0-\Lambda}{\left(\Lambda_0-\lambda_k^\Dir\right)\left(\Lambda-\lambda_k^\Dir\right)}\delta_{i, k}\right)_{i, k=1}^\infty,
\]
the latter being diagonal.

\begin{theorem}\label{thm:Dmatrix} 
We have
\begin{equation}\label{eq:Dmatrix}
\mathtt{D}_\Lambda =\mathtt{D}_{\Lambda_0}+\mathtt{A}\mathtt{B}_\Lambda \mathtt{A}^{\mathtt{t}},
\end{equation}
where
\[
\mathtt{D}_{\Lambda_0}= \left(\mathtt{d}_{\Lambda_0; i,j}\right)_{i,j=1}^\infty=\left( \sigma_i^{(\Lambda_0)}\delta_{i,j}\right)_{i,j=1}^\infty
\]
is diagonal, and $\mathtt{A}^\mathtt{t}$ is the transpose of $\mathtt{A}$.
\end{theorem}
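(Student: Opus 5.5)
We use Theorem \ref{thm:DtNdiff} together with Proposition \ref{prop:Elambda}. The plan is to compute, for fixed $i,j$, the entry
\[
\mathtt{d}_{\Lambda;i,j}-\sigma_i^{(\Lambda_0)}\delta_{i,j}=\myscal{\left(\DtN_\Lambda-\DtN_{\Lambda_0}\right)v_i,v_j}_{L^2(\pa)}.
\]
We then show that it equals $\sum_k \mathtt{a}_{i,k}\mathtt{b}_{\Lambda;k}\mathtt{a}_{j,k}$, which is precisely the $(i,j)$ entry of $\mathtt{A}\mathtt{B}_\Lambda\mathtt{A}^{\mathtt t}$. The diagonal form of $\mathtt{D}_{\Lambda_0}$ is immediate, since the $v_i$ are orthonormal eigenfunctions of $\DtN_{\Lambda_0}$.

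The key step is to avoid taking the normal derivative of a series that converges only in $L^2(\Omega)$; the Remark after Proposition \ref{prop:Elambda} warns exactly against this. Instead we use Green's formula to pass to volume integrals. Set $U_0=\mathcal{E}_{\Lambda_0}v_i$, $V_0=\mathcal{E}_{\Lambda_0}v_j$, and $W=U-U_0$ with $U=\mathcal{E}_\Lambda v_i$. By the proof of Theorem \ref{thm:DtNdiff}, $W\in H^1_0(\Omega)$ solves \eqref{eq:Wdiff}. Integrating by parts against $V_0$, which satisfies $-\Delta V_0=\Lambda_0V_0$, gives
\[
\myscal{\partial_n W, v_j}_{L^2(\pa)}=\myscal{\Delta W,V_0}_{L^2(\Omega)}-\myscal{W,\Delta V_0}_{L^2(\Omega)}=-(\Lambda-\Lambda_0)\myscal{U_0,V_0}_{L^2(\Omega)}-(\Lambda-\Lambda_0)\myscal{W,V_0}_{L^2(\Omega)}.
\]
Here the boundary term involving $W$ vanishes because $W|_{\pa}=0$. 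This integration by parts is legitimate in the weak sense for $H^1$ functions with $\Delta W\in L^2$, by the definition of $\partial_n$ in \cite{Chandler12}. The right-hand side is therefore $-(\Lambda-\Lambda_0)\myscal{U,V_0}_{L^2(\Omega)}$.

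Now expand $U$ and $V_0$ in the Dirichlet eigenbasis using Proposition \ref{prop:Elambda}:
\[
U=\sum_k\frac{\mathtt{a}_{i,k}}{\Lambda-\lambda_k^\Dir}U_k^\Dir,\qquad V_0=\sum_k\frac{\mathtt{a}_{j,k}}{\Lambda_0-\lambda_k^\Dir}U_k^\Dir.
\]
Both expansions converge in $L^2(\Omega)$, so Parseval applies. It gives
\[
-(\Lambda-\Lambda_0)\sum_k\frac{\mathtt{a}_{i,k}\mathtt{a}_{j,k}}{(\Lambda-\lambda_k^\Dir)(\Lambda_0-\lambda_k^\Dir)}=\sum_k \mathtt{a}_{i,k}\mathtt{b}_{\Lambda;k}\mathtt{a}_{j,k},
\]
which is exactly \eqref{eq:Dmatrix} entrywise. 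The functions $U_k^\Dir$ and the $v_i$ are real, so no complex conjugations arise.

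The main obstacle is the justification in the integration-by-parts step: making sure the Green identity holds at Lipschitz regularity and that interchanging sums with inner products is valid. Working through $L^2(\Omega)$ inner products, where Parseval is available, rather than differentiating series termwise is meant to handle both points. Convergence of the resulting series also follows from this, since it is an $L^2$ inner product of two square-summable coefficient sequences. If $\Lambda\in\Spec(-\Delta^\Dir)$, the formula is read with the conventions of Proposition \ref{prop:Elambda}, but the main case is $\Lambda\notin\Spec(-\Delta^\Dir)$.
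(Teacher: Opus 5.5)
Your proof is correct and follows the paper's argument: Theorem~\ref{thm:DtNdiff}, then Green's identity against $\mathcal{E}_{\Lambda_0}v_j$ to obtain $-(\Lambda-\Lambda_0)\myscal{W+U_0,V_0}_{L^2(\Omega)}$, then Parseval in the Dirichlet eigenbasis. The only difference is cosmetic: you observe that $W+U_0=\mathcal{E}_\Lambda v_i$ and apply Proposition~\ref{prop:Elambda} at $\Lambda$ directly, whereas the paper expands $W$ through the Dirichlet resolvent and adds it to the expansion of $U_0$ at $\Lambda_0$, which gives the same coefficients.
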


\begin{proof} 
We will show that for all $i,j=1,2,\dots$, 
\begin{equation}\label{eq:Dmatrixentry}
\mathtt{d}_{\Lambda; i,j} - \mathtt{d}_{\Lambda_0; i,j} = \sum_{k=1}^\infty \mathtt{a}_{i,k}\mathtt{b}_{\Lambda;k}\mathtt{a}_{j,k}
=(\Lambda_0-\Lambda)\sum_{k=1}^\infty\frac{\myscal{v_i, \partial_n U_k^\Dir}_{L^2(\partial\Omega)}\myscal{v_j, \partial_n U_k^\Dir}_{L^2(\partial\Omega)}}{\left(\Lambda_0-\lambda_k^\Dir\right)\left(\Lambda-\lambda_k^\Dir\right)},
\end{equation}
which is the entry-wise form of \eqref{eq:Dmatrix}.

For brevity, denote the bulk eigenfunctions of $\DtN_{\Lambda_0}$ by 
\[
V_i:=U^{(\Lambda_0)}_i=\mathcal{E}_{\Lambda_0} v_i.
\]

We apply Theorem \ref{thm:DtNdiff} to $u=v_i$, then \eqref{eq:Ddiff} reads
\begin{equation}\label{eq:Ddiff1}
\left(\mathcal{D}_{\Lambda}-\mathcal{D}_{\Lambda_0}\right) v_i =\partial_n W,
\end{equation}
where
$W:=(\Lambda-\Lambda_0)\left(-\Delta^\Dir-\Lambda\right)^{-1}V_i$ satisfies \eqref{eq:Wdiff} in which we take $U_0=V_i$ in the right-hand side.

We multiply both sides of \eqref{eq:Ddiff1} by $v_j$ in $L^2(\pa)$, and integrate by parts on the right, which leads to 
\begin{equation}\label{eq:Ddiff2}
\myscal{\left(\mathcal{D}_{\Lambda}-\mathcal{D}_{\Lambda_0}\right) v_i, v_j}_{L^2(\pa)} = \myscal{\partial_n W,  v_j}_{L^2(\pa)} = \myscal{\Delta W, V_j}_{L^2(\Omega)} -  \myscal{W, \Delta V_j}_{L^2(\Omega)}, 
\end{equation}
where we have taken into account the fact that $\left.W\right|_{\pa}=0$.  By \eqref{eq:Wdiff} and the standard equation for the bulk eigenfunction $V_j$, we have
\[
\Delta W = -\Lambda W - (\Lambda-\Lambda_0)V_i, \qquad \Delta V_j = - \Lambda_0 V_j,
\]
which after substitution into the right-hand side of \eqref{eq:Ddiff2} and evaluation of its left-hand side gives
\begin{equation}\label{eq:Ddiff3}
\mathtt{d}_{\Lambda; i,j} - \mathtt{d}_{\Lambda_0; i,j} = -(\Lambda-\Lambda_0)\myscal{W+V_i, V_j}_{L^2(\Omega)}.
\end{equation}
We now expand all the functions in the right-hand side of  \eqref{eq:Ddiff3} in the $L^2(\Omega)$ basis of the Dirichlet eigenfunctions, using also the standard resolvent representation 
\[
W=(\Lambda-\Lambda_0)\left(-\Delta^\Dir-\Lambda\right)^{-1}V_i = \sum_{k=1}^\infty \frac{\Lambda-\Lambda_0}{\lambda_k^\Dir -\Lambda}\myscal{V_i, U_k^\Dir}_{L^2(\Omega)} U_k^\Dir
\]
and
\[
V_i =  \sum_{k=1}^\infty  \myscal{V_i, U_k^\Dir}_{L^2(\Omega)} U_k^\Dir, \qquad V_j=  \sum_{k=1}^\infty  \myscal{V_j, U_k^\Dir}_{L^2(\Omega)} U_k^\Dir,
\]
which after minimal simplifications yields 
\begin{equation}\label{eq:Ddiff4}
\mathtt{d}_{\Lambda; i,j} - \mathtt{d}_{\Lambda_0; i,j} = (\Lambda_0-\Lambda) \sum_{k=1}^\infty \frac{\lambda_k^\Dir -\Lambda_0}{\lambda_k^\Dir -\Lambda}  \myscal{V_i, U_k^\Dir}_{L^2(\Omega)}  \myscal{V_j, U_k^\Dir}_{L^2(\Omega)}.
\end{equation}
Finally, by Proposition \ref{prop:Eexpand},
\[
\begin{split}
\myscal{V_i, U_k^\Dir}_{L^2(\Omega)} &= \frac{\myscal{v_i, \partial_n U^\Dir_k}_{L^2(\pa)}}{\Lambda_0-\lambda_k^\Dir}=\frac{\mathtt{a}_{i,k}}{\Lambda_0-\lambda_k^\Dir},\\
\myscal{V_j, U_k^\Dir}_{L^2(\Omega)} &= \frac{\myscal{v_j, \partial_n U^\Dir_k}_{L^2(\pa)}}{\Lambda_0-\lambda_k^\Dir}=\frac{\mathtt{a}_{j,k}}{\Lambda_0-\lambda_k^\Dir},
\end{split}
\]
and the substitution into \eqref{eq:Ddiff4} proves \eqref{eq:Dmatrixentry} and therefore \eqref{eq:Dmatrix}.
\end{proof}

\begin{example}
Let $\Omega=\mathbb{D}$ be the unit disk and let $\Lambda_0=0$. We fix the  standard basis of Steklov eigenfunctions    
$v_1=\frac{1}{\sqrt{2\pi}}$, $v_{2j}=\frac{\sin j\theta}{\sqrt{\pi}}$, $v_{2j+1}=\frac{\cos j\theta}{\sqrt{\pi}}$, $j\in\mathbb{N}$, and choose the basis of Dirichlet eigenfunctions with exactly the same angular dependencies. 
It is easy to check that in this case the matrix $\mathtt{D}_\Lambda$ is diagonal, which agrees with the fact that the eigenfunctions of $\DtN_\Lambda$ do not depend on $\Lambda$, see Remark \ref{rem:disk}. It is also relatively easy to see that the statement of Theorem \ref{thm:Dmatrix} is equivalent to the following identities
\[
k+2\Lambda \sum_{m=1}^\infty \frac{1}{\Lambda-j_{k,m}^2} = 
\begin{cases}
\frac{\sqrt{-\Lambda}I'_k(\sqrt{-\Lambda})}{I_k(\sqrt{-\Lambda})}\qquad&\text{if }\Lambda<0,\\[10pt]
\frac{\sqrt{\Lambda}J'_k(\sqrt{\Lambda})}{J_k(\sqrt{\Lambda})}\qquad&\text{if }\Lambda>0,
\end{cases}
\]
being valid for all $\Lambda\ne 0$ and all $k=0,1,2,\dots$. These are well known and are easily derived from \cite[\S15$\cdot$41, formula  (1)]{Watson}, see also \cite{Grebenkov21b}.
\end{example}

\begin{remark} 
A similar result representing the \emph{Neumann-to-Dirichlet map} $\DtN_\Lambda^{-1}$ in terms of Neumann eigenvalues and the boundary traces of Neumann eigenfunctions was obtained in \cite{LevMar}.
\end{remark}

\begin{remark} If $\Lambda$ is a Dirichlet eigenvalue of multiplicity $m$, $\Lambda=\lambda^\Dir_{\ell}=\dots=\lambda^\Dir_{\ell+m-1}$, then we additionally need to impose the orthogonality conditions $\myscal{u, \partial_n U^\Dir_k}_{L^2(\pa)}=0$, $k=\ell,\dots, \ell+m-1$ in the domain of $\DtN_\Lambda$. If $u=\sum_{i=1}^\infty c_i v_i \in \Dom(\DtN_\Lambda)$, the orthogonality conditions mean $\left(\mathtt{A}^\mathtt{t}\mathbf{c}\right)_k=0$, $k=\ell, \dots, \ell+m-1$, where $\mathbf{c}=(c_1, c_2,\dots)^\mathtt{t}$, thus removing the singular terms in $\mathtt{A}\mathtt{B}_\Lambda\mathtt{A}^\mathtt{t}\mathbf{c}$.
\end{remark}

We are now in the position to prove Theorem \ref{thm:analyticity}. 

\begin{proof}[Proof of Theorem \ref{thm:analyticity}]
It is immediately clear from Theorem \ref{thm:Dmatrix}  that $\mathtt{D}_{\Lambda}$, and therefore $\DtN_\Lambda$, is a type (A) analytic family of operators in the sense of Kato \cite{Kato} on every interval not including the Dirichlet eigenvalues of $\Omega$, and therefore the union of spectra of $\DtN_\Lambda$ can be decomposed into curves  which are real-analytic in $\Lambda$. The number of curves blowing up at a particular Dirichlet eigenvalue $\lambda_k^\Dir$ coincides with the multiplicity of that eigenvalue by  Theorem \ref{thm:lambdadep}, and the fact that the remaining curves are analytic at $\lambda_k^\Dir$ follows from the analyticity in $\gamma$ of the eigenvalue curves of the Robin Laplacian $-\Delta^{\Rob,\gamma}$ \cite{Bucur17} and the Robin--Dirichlet-to-Neumann duality of Proposition \ref{prop:DtNRduality}.
\end{proof}

As an additional consequence of Theorems \ref{thm:analyticity} and \ref{thm:Dmatrix}, we are able to give the positive answer to \cite[Open problem 4.11]{Bucur17}. 

\begin{theorem}\label{thm:Robincurves}
 Let $\Omega\subset\mathbb{R}^d$ be a bounded Lipschitz domain. Then every analytic branch $\lambda(\gamma)$ of eigenvalues of the Robin Laplacian $-\Delta^{\Rob,\gamma}_\Omega$ either tends to $-\infty$ or converges to a Dirichlet eigenvalue of $\Omega$ as $\gamma\to -\infty$.
\end{theorem}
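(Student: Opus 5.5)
The plan is to pass through the Robin--Dirichlet-to-Neumann duality (Proposition \ref{prop:DtNRduality}). An analytic Robin branch $\gamma\mapsto\lambda(\gamma)$ becomes a curve $\Lambda\mapsto\sigma(\Lambda)$ in $\bigcup_\Lambda\Spec(\DtN_\Lambda)$, with $\sigma=-\gamma$. The limit $\gamma\to-\infty$ then corresponds to $\sigma\to+\infty$.

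First I fix the analytic branch $\lambda(\gamma)$. Robin eigenvalues are non-decreasing in $\gamma$, and on an analytic branch they are strictly increasing: the derivative is $\|u\|^2_{L^2(\pa)}>0$ for a normalised eigenfunction, which cannot vanish on the boundary. Hence $\lambda(\gamma)$ is monotone and has a limit $L\in[-\infty,\infty)$ as $\gamma\to-\infty$. If $L=-\infty$ we are done, so assume $L$ is finite.

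In that case the range of the branch lies in an interval $(L,\lambda(\gamma_0)]$. Its graph, read through the duality, gives points $(\Lambda,\sigma)=(\lambda(\gamma),-\gamma)$ of the spectral set, with $\sigma\to+\infty$ while $\Lambda\to L^+$. Suppose $L\notin\Spec(-\Delta^\Dir)$. By Theorem \ref{thm:Dmatrix}, on a neighbourhood $[L-\delta,L+\delta]$ free of Dirichlet eigenvalues, $\DtN_\Lambda-\DtN_{L}$ is given by $\mathtt{A}\mathtt{B}_\Lambda\mathtt{A}^{\mathtt t}$ with $\Lambda_0=L$. Equivalently, by Theorem \ref{thm:DtNdiff} it equals $(\Lambda-L)\,\partial_n(-\Delta^\Dir-\Lambda)^{-1}\mathcal{E}_L$.

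I want to show this difference is relatively bounded with respect to $\DtN_L$, with relative bound tending to $0$ as $\Lambda\to L$. Equivalently, the quadratic form difference should satisfy
\[
\bigl|\myscal{(\DtN_\Lambda-\DtN_L)u,u}\bigr|\le C|\Lambda-L|\bigl(\myscal{\DtN_L u,u}+c\|u\|^2\bigr).
\]
This follows from \eqref{eq:Ddiff3}: the difference equals $-(\Lambda-L)\myscal{W+U_0,U_0}_{L^2(\Omega)}$, and $\|U_0\|_{L^2(\Omega)}^2$ is controlled by the $H^1$ norm of $\mathcal{E}_L u$, hence by $\|u\|_{H^{1/2}(\pa)}^2$, hence by the form of $\DtN_L$ shifted. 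Kato's analytic perturbation theory then says the eigenvalues of $\DtN_\Lambda$ on this neighbourhood lie on finitely many analytic curves through each bounded window, and every eigenvalue curve is locally bounded near $\Lambda=L$. The eigenvalue counting function of $\DtN_\Lambda$ below any level $S$ is uniformly bounded for $\Lambda$ near $L$.

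This is the main obstacle: I need to exclude a single analytic curve $\sigma(\Lambda)$ escaping to $+\infty$ as $\Lambda\to L^+$. The cleanest route is the form bound above. It gives $\sigma_k^{(\Lambda)}\le(1+C|\Lambda-L|)\sigma_k^{(L)}+C'$, and conversely, so the whole analytic family is continuous at $L$ in the norm-resolvent sense. By Kato's theory (type (A) family), each analytic curve defined near $L^+$ extends analytically through $L$. In particular it cannot blow up, which contradicts $\sigma\to+\infty$.

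Thus $L\in\Spec(-\Delta^\Dir)$. Theorem \ref{thm:analyticity} confirms the picture: the curves blowing up to $+\infty$ as $\Lambda\to L^+$ are exactly those emanating from the Dirichlet eigenvalue $L$, and there are as many as its multiplicity. Matching the Robin branch to one of these curves shows it converges to that Dirichlet eigenvalue.
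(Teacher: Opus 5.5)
Your proof is correct and follows essentially the same route as the paper. Monotonicity gives the limit $L$; duality turns a branch with finite $L\notin\Spec(-\Delta^\Dir_\Omega)$ into a Dirichlet-to-Neumann curve escaping to $+\infty$ as $\Lambda\to L^+$; and this contradicts the analyticity of the family $\DtN_\Lambda$ across $L$. The only difference is that the paper cites Theorem~\ref{thm:analyticity} (itself based on Theorem~\ref{thm:Dmatrix}), while you re-derive the needed local statement from \eqref{eq:Ddiff3}; the decisive ingredient is the Rellich--Kato existence of branches analytic on a whole interval around $L$, not the ordered min--max bounds (your form bound gives a type (B) family, which suffices).
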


\begin{proof} As the analytic  branches of eigenvalues of the Robin Laplacian are monotone increasing in $\gamma$, there exists a limit $L=\lim_{\gamma\to -\infty}  \lambda(\gamma)$ along a chosen branch, which is either finite or $-\infty$. Suppose, in contradiction with the statement of theorem, that there exists a branch for which $L$ is finite and $L\not\in \Spec\left(-\Delta^\Dir_\Omega\right)$. Then, by  Robin--Dirichlet-to-Neumann duality, there exists a branch $\sigma^{(\Lambda)}$ of eigenvalues of the Dirichlet-to-Neumann map which is analytic only in an interval $\left(L, \lambda_k^\Dir\right)$, with some $k\in\mathbb{N}$, which contradicts Theorem \ref{thm:analyticity}. Thus, either $L=-\infty$ or $L\in \Spec\left(-\Delta^\Dir_\Omega\right)$.
\end{proof}

\subsection{Domain monotonicity}\label{sec:dommon}

The eigenvalues of the Dirichlet Laplacian exhibit the domain
monotonicity property: if $\Omega_1 \subset \Omega_2$, then
$\lambda_k^\Dir(\Omega_1) \geq \lambda_k^\Dir(\Omega_2)$ for all $k\in\mathbb{N}$.  This property is
known to fail for the Neumann and Robin Laplacians.  Likewise, it does
not hold for the Steklov problem, i.e., the inclusion $\Omega_1
\subset \Omega_2$ does not in general imply
$\sigma_k^{(\Lambda)}(\Omega_1) \geq \sigma_k^{(\Lambda)}(\Omega_2)$, even
for $\Lambda = 0$ as might be naively expected from the scaling
\eqref{eq:scaling0} which guarantees $\sigma_k^{(0)}(\alpha\Omega) \geq \sigma_k^{(0)}(\Omega)$ for $\alpha<1$.  A dumbbell shape shown in
Fig. \ref{fig:domain_monotonicity} presents a simple
counter-example.  In fact, as the channel between two connected disks
gets narrower, the limit of the two isolated disks is approached, so that
the second eigenvalue $\sigma_2^{(0)}(\Omega_1)$ tends to $0$.

At the same time, the eigenvalues of the Dirichlet-to-Neumann map with $\Lambda\le 0$ obey monotonicity with
respect to exclusions: if $\Omega_1$, $\Omega_2$ are two open bounded sets such that $\overline{\Omega_1}\subset\Omega_2$, and $\Omega_1':=\Omega_2\setminus\overline{\Omega_1}$, see Figure \ref{fig:domain_monotonicity}, then for all $\Lambda\le 0$,
\begin{equation}\label{eq:muk_monotonicity0}
\sigma_k^{(\Lambda)}(\Omega_1') \le \sigma_k^{(\Lambda)}(\Omega_2),  \qquad k\in\mathbb{N}. 
\end{equation}
In the case $\Lambda=0$ this can be found in \cite{Bogosel17} but the proof translates directly to the case $\Lambda\le 0$: from the minimax relation
\eqref{eq:muk_minimax}, we have, if $U\in \mathcal{H}_\Lambda(\Omega_2)$ and $V=U|_{\Omega_1'}$, then $V\in  \mathcal{H}_\Lambda(\Omega_1')$, and also
\[
\|\nabla V\|^2_{L^2(\Omega_1')} - \Lambda \|V\|^2_{L^2(\Omega_1')} < \|\nabla U\|^2_{L^2(\Omega_2)} - \Lambda \| U\|^2_{L^2(\Omega_2)}
\]
and
\[
\|V|_{\pa_1'}\|^2_{L^2(\pa_1')} > \|U|_{\pa_2}\|^2_{L^2(\pa_2)}.
\]
Then \eqref{eq:muk_monotonicity0} follows immediately.

Note that for $\Lambda >0$, inequality \eqref{eq:muk_monotonicity0} is not necessarily true. Indeed, $\sigma_1^{(\Lambda)}(\Omega_2) \to -\infty$ as $\Lambda\nearrow \lambda_1^\Dir(\Omega_2)$. At the same time, since $\lambda_1^\Dir(\Omega_1')>\lambda_1^\Dir(\Omega_2)$ by strict domain monotonicity of the Dirichlet Laplacian 
(see \cite[Proposition 3.2.2]{Levitin}), 
$\sigma_1^{(\Lambda)}(\Omega_1')$  remains bounded below in this regime.

\begin{figure}[htb]
\centering
\includegraphics{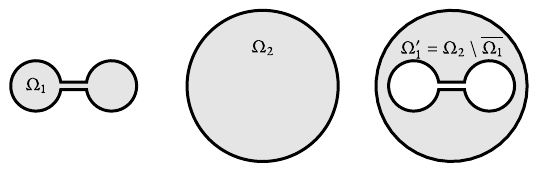}
\caption{An illustration of domain monotonicity and non-monotonicity.}
\label{fig:domain_monotonicity}
\end{figure}

\subsection{Isoperimetic inequalities for eigenvalues}\label{sec:iso}

Isoperimetric inequalities for Laplace and Steklov eigenvalues
have been actively studied in spectral geometry over more than a hundred years, see 
\cite[Chapters 5, 7]{Levitin} and references therein.  
Rather  little is known in this direction for eigenvalues of the Dirichlet-to-Neumann map for  $\Lambda \neq 0$. 
In what follows, we focus mostly on planar domains.

Let us start with the inequalities  for Steklov eigenvalues, i.e., for $\Lambda=0$.
If $\Omega\subset \R^2$ is a bounded simply connected domain with a
Lipschitz boundary $\pa$ of length $|\pa|_1$, then it was shown in \cite{Weinstock54} that
\begin{equation}\label{eq:Weinstock}
\sigma_2^{(0)} \leq \frac{2\pi}{|\pa|_1}, 
\end{equation}
with the equality attained if and only if $\Omega$ is a disk.  

This result  can be extended to all eigenvalues via the Hersch--Payne--Schiffer inequality \cite{Hersch75},
\begin{equation}\label{eq:Hersch2}
\sigma_k^{(0)} \leq \frac{2\pi (k-1)}{|\pa|_1},  \qquad k = 2, 3,\dots.
\end{equation}
Moreover, this inequality was shown in \cite{Girouard10} to be sharp for any $k$,  see also \cite{Girouard10b}.  
The equality is
attained in the limit by a union of $k-1$ identical disks touching each
other.  In fact, \eqref{eq:Hersch2} is strict for $k=3$, and as was shown in \cite{FraserSchoen20} (see also \cite[Remark 1.12]{Vinokurov}),
for any $k\ge 3$ the equality is not attained on a smooth domain. 
It would be interesting to extend this to Lipschitz domains and thus show that  inequality \eqref{eq:Hersch2} is in fact strict for all $k\ge 3$.

Note that originally Weinstock's and the Hersch--Payne--Schiffer inequalities  were proved
for a more general Steklov problem
\[
\begin{cases}
\Delta u = 0 \qquad&\text{in }\Omega,\\  
\partial_n u = \sigma \rho u  \qquad&\text{on }\pa,
\end{cases}
\]
with a positive mass density $\rho\in L^\infty(\pa)$ on the boundary
$\pa$, in which case $|\pa|_1$ is replaced by the total ``mass'' $M =
\int\nolimits_{\pa} \rho$. We also remark that there is a more general version of the Hersch--Payne--Schiffer inequality for pairwise products of Steklov eigenvalues which implies \eqref{eq:Hersch2} as a special case. 

As was highlighted in \cite{Girouard17}, Weinstock's inequality cannot be
generalised to non-simply connected domains: one can construct an annulus that yields a
counter-example.  In fact, for arbitrary bounded planar domains one has 
\begin{equation}
\label{eq:steklovhigher}
\sigma_k^{(0)} \leq \frac{8\pi (k-1)}{|\pa|_1}  \qquad k=2,3,\ldots.  
\end{equation}
This inequality was proved for $k=2$  in \cite{Kokarev14} and for $k\ge 2$ in  \cite{Girouard21}.
Moreover, \eqref{eq:steklovhigher} is sharp, and  there is a
sequence of bounded planar domains $\Omega_n$ with an increasing number of boundary
components, for which $\sigma_k^{(0)}(\Omega_n) |\pa_n|_1 \to 8\pi (k-1)$ 
\cite{Girouard21}. In fact, inequality \eqref{eq:steklovhigher} for $k=2$ can be improved provided the number of boundary
components stays bounded, see \cite{KarStern24}, \cite{KarLag24}.

Several extensions and analogues of Weinstock's inequality have been obtained in higher dimensions
in the case $\Lambda=0$. In particular, the ball maximises the first nonzero Steklov
eigenvalue $\sigma_2^{(0)}$ among {\em convex} domains of given surface
area $|\pa|_{d-1}$  \cite[Theorem 3.1]{Bucur21}:
\[
\sigma_2^{(0)} \leq \frac{(\omega_d\, d)^{\frac{1}{d-1}}}{|\pa|_{d-1}^{\frac{1}{d-1}}},
\]
where 
\begin{equation}\label{eq:omegad}
\omega_d = |\mathbb{B}|_d=\frac{\pi^{\frac{d}{2}}}{\Gamma\left(\frac{d}{2}+1\right)} 
\end{equation}
is the volume of the $d$-dimensional unit ball.  Actually,  a somewhat stronger  inequality
\[
\sigma_2^{(0)} \leq d (\omega_d)^{\frac{2}{d}} \frac{\left|\Omega\right|_d^{1-\frac{2}{d}}}{|\pa|_{d-1}}
\]
also holds in this setting \cite[Remark 5]{Bucur21}. 
At the same time, it was proved in \cite{Brock01} that the ball maximises $\sigma_2^{(0)}$ among all Euclidean
domains $\Omega\subset \R^d$ of given  volume $|\Omega|_d$:
\[
\sigma_2^{(0)} \leq \left(\frac{\omega_d}{|\Omega|_d}\right)^{\frac{1}{d}} \,.
\]
Note that  for simply connected  planar domains, Weinstock's inequality implies Brock's inequality via the usual  isoperimetric inequality.  We refer also to \cite{Bogosel17} for related results on isoperimetric inequalities for higher Steklov eigenvalues on planar domains under the area constraint.

Let us now present some recent results  in the case $\Lambda \neq 0$.
Let $\Omega$ be a bounded 
planar Lipschitz domain.
As was observed in \cite[Section 4.2]{Medvedev}, estimates obtained in  \cite[Section 5]{GNY} can be applied to Robin eigenvalues
and, together with the quantitative version of the Robin--Dirichlet-to-Neumann duality of Proposition \ref{prop:dualityfurther},  they imply
\begin{equation}\label{eq:Medvedev}
\sigma_k^{(\Lambda)} \le \frac{C(k-1+m) - \Lambda |\Omega|_2}{|\pa|_1}, \quad k \ge 1,
\end{equation}
where $C>0$ is a  universal constant (which does not depend on  $\Omega$ or $\Lambda$) and $m=\mathcal{N}_{-\Delta^\Dir_\Omega}(\Lambda)$.  

One can obtain a more explicit estimate involving the first two Dirichlet-to-Neumann eigenvalues provided $\Lambda<\lambda_1^\Dir(\Omega)$. It was shown in \cite[Theorem A]{Menezes} that under this condition,
for any bounded planar domain $\Omega$ with $l$ boundary components and  for any $\alpha\in (0, \pi/2)$,  one has
\begin{equation}\label{eq:Menezes}
\sigma_1^{(\Lambda)} \cos^2 \alpha + \sigma_2^{(\Lambda)} \sin^2 \alpha  \le \frac{4\pi (1-\cos \alpha)l -\Lambda |\Omega|_2}{ |\pa|_1}.
\end{equation}
It is instructive to compare \eqref{eq:Menezes} with the Weinstock inequality \eqref{eq:Weinstock}. Setting $\Lambda=0$ and $l=1$  in \eqref{eq:Menezes}, and taking into account that $\sigma_1^{(0)}=0$,  we get
\[
\sigma_2^{(0)} |\pa|_1 \le \frac{4 \pi}{(1+\cos \alpha)|\pa|_1}.
\]
Taking the limit as $\alpha \to 0$, we recover \eqref{eq:Weinstock}.

In conclusion, let us note that both inequalities \eqref{eq:Medvedev} and \eqref{eq:Menezes} admit generalisations to surfaces endowed with a Riemannian metric, see \cite{Medvedev}, \cite{Menezes} for details. We also remark that extensions of these results to higher dimensions have not been worked out yet.

\subsection{Other eigenvalue estimates}\label{sec:otherestimates}

We remark that in addition to isoperimetric inequalities some further bounds on all eigenvalues of $\DtN_\Lambda$ are known, in particular for $\Lambda<0$. For example, in \cite{Girouard22} the following result was proved.

\begin{theorem}\label{thm:Hor}
Let $\Omega \subset\R^d$ be a bounded domain with a boundary
$\pa$ of class $C^{1,1}$ if $d=2$ and of class $C^{2,\alpha}$ with  $\alpha>0$ if $d>2$. Denote by $0=\nu_1<\nu_2\le\dots$, the eigenvalues of the Laplace--Beltrami operator $-\Delta_{\pa}$
on the boundary $\pa$. Let $\Lambda \leq 0$.  Then, with some constant $C > 0$ depending only on $\Omega$, the
bounds
\begin{equation}\label{eq:muk_lambda_estimate}
\left|\sigma_k^{(\Lambda)} - \sqrt{- \Lambda+\nu_k} \right| \le  C
\end{equation}
hold uniformly over all $\Lambda \in (-\infty,0]$ and $k\in \mathbb{N}$.
\end{theorem}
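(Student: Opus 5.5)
The plan is to compare the quadratic form of $\DtN_\Lambda$ with that of the boundary operator $\left(-\Delta_{\pa}-\Lambda\right)^{1/2}$ and to conclude via the minimax principle \eqref{eq:abs_minimax}. Write $\mu:=\sqrt{-\Lambda}\ge 0$. The eigenvalues of $\left(-\Delta_{\pa}+\mu^2\right)^{1/2}$ are exactly $\sqrt{\mu^2+\nu_k}$, with the same eigenfunctions as $-\Delta_{\pa}$. By minimax, it therefore suffices to prove the operator-form estimate
\[
\left|\myscal{\DtN_\Lambda u,u}_{L^2(\pa)}-\myscal{\left(-\Delta_{\pa}+\mu^2\right)^{1/2}u,u}_{L^2(\pa)}\right|\le C\|u\|^2_{L^2(\pa)},
\]
with $C$ independent of $\mu$. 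More precisely, one needs $\DtN_\Lambda-\left(-\Delta_{\pa}+\mu^2\right)^{1/2}$ to be bounded on $L^2(\pa)$, uniformly in $\mu\in[0,\infty)$. Both operators are self-adjoint and bounded below, with the same form domain $H^{1/2}(\pa)$, so a uniform bounded perturbation shifts every ordered eigenvalue by at most $C$.

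To construct the comparison, I would work in a collar neighbourhood of $\pa$, using boundary normal coordinates $(y,t)$ with $y\in\pa$ and $t\in[0,\delta)$ the distance to the boundary. Here the $C^{1,1}$ or $C^{2,\alpha}$ regularity guarantees that these coordinates exist and that the metric is $C^{0,1}$ or $C^{1,\alpha}$ respectively. In these coordinates $-\Delta-\Lambda=-\partial_t^2+H(y,t)\partial_t-\Delta_{\Sigma_t}+\mu^2$, where $H$ is the mean curvature of the parallel surface $\Sigma_t$.

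The model solution is $U_0=\er^{-tA}u$ multiplied by a cutoff $\chi(t)$, with $A:=\left(-\Delta_{\pa}+\mu^2\right)^{1/2}$. It solves the flat-collar problem exactly, and its Neumann datum is $Au$. The error $F:=(-\Delta-\Lambda)(\chi U_0)$ contains terms involving $H\partial_tU_0$, $\left(\Delta_{\Sigma_t}-\Delta_{\pa}\right)U_0$ and the cutoff derivatives. The correction $W$ then solves $(-\Delta-\Lambda)W=-F$ with $W|_{\pa}=0$, and $\DtN_\Lambda u=Au+\partial_nW$.

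The crux is to bound $\|\partial_n W\|_{L^2(\pa)}\le C\|u\|_{L^2(\pa)}$ uniformly in $\mu$. I would do this by energy estimates. Since $\Lambda\le0$, the operator $-\Delta^\Dir-\Lambda$ is coercive with constant $\ge\lambda_1^\Dir+\mu^2$. Multiplying by $W$ and by $t\partial_tW$ (a Rellich-type identity) should control $\|\partial_nW\|_{L^2(\pa)}$ by weighted norms of $F$.

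To estimate $F$, I would use that $\|\er^{-tA}u\|$ and $\|tA\er^{-tA}u\|$ are bounded, and that $\int_0^\delta\|A^{1/2}\er^{-tA}u\|^2\,\dr t\le\|u\|^2/2$. These give $\|t^{1/2}F\|$-type bounds, because each curvature term carries one derivative but also one factor of $t$ from $\Sigma_t-\pa=O(t)$, or is first order with a bounded coefficient. The large parameter $\mu$ only helps, since $\er^{-tA}$ decays at rate at least $\mu$.

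This uniformity in $\mu$ is the main obstacle. It is exactly why $C$ must not depend on $\Lambda$, and it is where the limited regularity ($C^{1,1}$ in 2D) must be handled carefully, since derivatives of $H$ cannot be used. In $d=2$ the boundary is one-dimensional, and $\Delta_{\Sigma_t}$ is just a reparametrised second derivative with Lipschitz coefficient. For $d>2$ the $C^{1,\alpha}$ metric should suffice for the commutator estimates.
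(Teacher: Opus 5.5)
Your reduction to a uniform operator bound is fine, and that bound is in fact true (see the final remark below). The gap is in how you propose to prove it: the argument breaks down at exactly the step you call the crux. The real obstruction is the regularity of $\pa$, not uniformity in $\mu$.

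\textbf{The curvature error term is not perturbative.} The error term $H\,\partial_t\bigl(\er^{-tA}u\bigr)=-HA\er^{-tA}u$ carries no factor of $t$. Its $L^2$ norm over the collar is of the size of $\|A^{1/2}u\|_{L^2(\pa)}$, not $\|u\|_{L^2(\pa)}$, and it is precisely what produces the order-zero part of $\DtN_\Lambda-A$.

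\textbf{Weighted bounds on $F$ do not control the flux of $W$.} Bounds of the form $\|t^{1/2}F\|\le C\|u\|$ are not enough. Already for $-W''=g$ on $(0,\delta)$ with $W(0)=W(\delta)=0$, one can have $\int_0^\delta t\,g(t)^2\,\dr t<\infty$ while $W'(0)=\infty$; take $g(t)=1/(t|\ln t|)$.

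\textbf{The multiplier is the wrong one.} The multiplier $t\,\partial_tW$ vanishes on $\pa$ and yields no boundary term at all. To obtain $\|\partial_nW\|^2_{L^2(\pa)}$ you need $X\cdot\nabla W$ with $X=n$ on $\pa$. The pairing $\int_\Omega (X\cdot\nabla W)\,F$ then needs unweighted control of $F$, which you do not have. Handling this term requires cancellation. In a flat model (product collar, $\mu>0$), its contribution to the flux is, up to sign,
\[
\int_0^\infty\er^{-sA}HA\er^{-sA}u\,\dr s=\tfrac12Hu+\tfrac12\int_0^\infty\er^{-sA}[H,A]\er^{-sA}u\,\dr s ,
\]
so you need commutator estimates between $A$ and the curvature.

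\textbf{At the stated regularity those estimates are unavailable.} The metric in boundary normal coordinates contains the shape operator, so it has one derivative less than you claim. For a $C^{1,1}$ curve it is $(1-t\kappa(s))^2\,\dr s^2+\dr t^2$ with $\kappa$ merely in $L^\infty$; it jumps for a stadium. For a $C^{2,\alpha}$ hypersurface the metric is only $C^{0,\alpha}$. Consequently $\Delta_{\Sigma_t}-\Delta_{\pa}$ contains the term $t\kappa'(1-t\kappa)^{-3}\partial_s$ with $\kappa'$ only a distribution, so $F$ is in general not even a function. Moreover $[H,A]$ is unbounded on $L^2$ for non-Lipschitz $H$. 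Your scheme is essentially the smooth, parameter-dependent pseudodifferential argument, and it does not transfer to this setting without substantial rough-coefficient harmonic analysis.

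\textbf{How the cited proof avoids this.} The proof in \cite{Girouard22}, which the paper cites, applies a Rellich--Pohozaev-type identity from H\"ormander's manuscript to $U=\mathcal{E}_\Lambda u$ itself rather than to a correction. So no source term and no parametrix ever appear. Take a Lipschitz vector field $X$ equal to $n$ on $\pa$, for instance a cut-off of $-\nabla\operatorname{dist}(\cdot,\pa)$, which is Lipschitz near $\pa$ under the hypotheses. Then
\[
\|\DtN_\Lambda u\|^2_{L^2(\pa)}-\|\nabla_{\pa}u\|^2_{L^2(\pa)}+\Lambda\|u\|^2_{L^2(\pa)}=\int_\Omega\left(2\mydotp{(DX)\nabla U,\nabla U}-\left(|\nabla U|^2-\Lambda U^2\right)\operatorname{div}X\right).
\]
For $\Lambda\le0$ the right-hand side is bounded by $C_X\left(\|\nabla U\|^2_{L^2(\Omega)}-\Lambda\|U\|^2_{L^2(\Omega)}\right)=C_X\myscal{\DtN_\Lambda u,u}_{L^2(\pa)}$, where $C_X$ depends only on $\|DX\|_{L^\infty}$. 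The $\Lambda$-dependence is absorbed exactly by the quadratic form, and this is where uniformity comes from. No curvature derivatives or commutators enter.

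Writing $B=(-\Delta_{\pa}-\Lambda)^{1/2}$, this says $\bigl|\|\DtN_\Lambda u\|^2-\|Bu\|^2\bigr|\le C\myscal{\DtN_\Lambda u,u}$. An elementary comparison lemma then gives \eqref{eq:muk_lambda_estimate}.

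\textbf{Final remark.} Applying the L\"owner--Heinz inequality to $(\DtN_\Lambda-\frac C2)^2\le(B+\frac C2)^2$ and to $B^2\le(\DtN_\Lambda+\frac C2)^2$ yields $-\frac C2\le\DtN_\Lambda-B\le C$. This is precisely the uniform operator bound you were after.
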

A more  elaborate version  of Theorem \ref{thm:Hor} valid for positive $\Lambda$ can be also found in the same paper. 

Note that  since $\nu_1=0$, \eqref{eq:muk_lambda_estimate}  implies $\sigma_1^{(\Lambda)} \le C+\sqrt{-\Lambda}$. This bound can be further improved, without the restrictions on the smoothness of the boundary, to get the following
\begin{theorem}\label{thm:sqb} Let $\Omega\subset\mathbb{R}^d$ be a bounded Lipschitz domain. Then 
\begin{equation}\label{eq:Bucsigma1}
\sigma_1^{(\Lambda)}\le \sqrt{- \Lambda}\qquad\text{for }\Lambda< 0.
\end{equation}
\end{theorem}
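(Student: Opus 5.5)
We plan to use the variational characterisation \eqref{eq:muk_minimaxk1}, which for $\Lambda<0\le\lambda_1^\Dir(\Omega)$ may equally be taken in the form \eqref{eq:muk_minimax1} over all of $H^1(\Omega)$. The natural test functions are plane-wave type exponentials adapted to the negative parameter. For a unit vector $\omega\in\mathbb{S}^{d-1}$ set $\kappa:=\sqrt{-\Lambda}$ and
\[
U_\omega(x):=\er^{\kappa\mydotp{\omega,x}}.
\]
Then $-\Delta U_\omega=-\kappa^2U_\omega=\Lambda U_\omega$, so $U_\omega\in\mathcal{H}_\Lambda(\Omega)$. Since $U_\omega$ is $\Lambda$-harmonic, the numerator of the Rayleigh quotient equals, by the integration by parts formula behind \eqref{eq:bilform},
\[
\myscals{U_\omega,U_\omega}_\Lambda=\int_{\pa}U_\omega\,\partial_nU_\omega=\kappa\int_{\pa}\mydotp{\omega,n}\,\er^{2\kappa\mydotp{\omega,x}}.
\]
A single direction $\omega$ will not suffice, because $\mydotp{\omega,n}$ varies in sign. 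We therefore average over directions.

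The key step is to take the average over $\omega\in\mathbb{S}^{d-1}$ with respect to the normalised surface measure in both numerator and denominator. This is legitimate because if $R(U_\omega)\ge\sigma_1^{(\Lambda)}$ for each $\omega$, then
\[
\int \myscals{U_\omega,U_\omega}_\Lambda\,\dr\omega\ \ge\ \sigma_1^{(\Lambda)}\int\|U_\omega\|^2_{L^2(\pa)}\,\dr\omega.
\]
It then suffices to prove the pointwise inequality, for each $x\in\pa$ with normal $n=n(x)$,
\[
\kappa\int_{\mathbb{S}^{d-1}}\mydotp{\omega,n}\,\er^{2\kappa\mydotp{\omega,x}}\,\dr\omega\ \le\ \kappa\int_{\mathbb{S}^{d-1}}\er^{2\kappa\mydotp{\omega,x}}\,\dr\omega .
\]
This holds trivially, since $\mydotp{\omega,n}\le1$ and the exponential is positive. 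Integrating over $\pa$ gives $\sigma_1^{(\Lambda)}\le\kappa=\sqrt{-\Lambda}$.

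One issue needs care: the bound $R(U_\omega)\ge\sigma_1^{(\Lambda)}$ applied to each $\omega$ separately, then integrated. This is fine because it is the inequality $\myscals{U_\omega,U_\omega}_\Lambda\ge\sigma_1^{(\Lambda)}\|U_\omega\|^2$ being integrated, and no division by an averaged quantity is involved.

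An alternative route avoids averaging altogether. The function $W(x):=\int_{\mathbb{S}^{d-1}}U_\omega(x)\,\dr\omega$ is radial and $\Lambda$-harmonic, and could serve directly as a test function. However, estimating its Rayleigh quotient requires $|\nabla W|\le\kappa W$, which follows from Cauchy--Schwarz applied to the integral representation. That route is fiddlier, so I expect the averaging argument to be the cleanest.

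The main step requiring care is justifying the integration by parts on a Lipschitz boundary. This is unproblematic, since $U_\omega$ is smooth on $\overline{\Omega}$ and $n$ exists almost everywhere on $\pa$.

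Strictly speaking, the averaging is not even necessary. For a single direction $\omega$ we already have
\[
\kappa\int_{\pa}\mydotp{\omega,n}\,\er^{2\kappa\mydotp{\omega,x}}\le\kappa\int_{\pa}\er^{2\kappa\mydotp{\omega,x}}=\kappa\,\|U_\omega\|^2_{L^2(\pa)},
\]
which gives the bound directly with one test function. I would present this one-line version, noting that equality cannot hold because $\mydotp{\omega,n}<1$ on a set of positive measure. That observation even yields strict inequality.
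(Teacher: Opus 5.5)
Your final one-line argument is correct and is essentially the paper's proof. The paper uses the same exponential test function $\exp(\mydotp{x,\nu})$, and its key inequality $\|W\|^2_{L^2(\pa)}>2|\nu|\,\|W\|^2_{L^2(\Omega)}$ is exactly your bound $\mydotp{\omega,n}\le 1$ inside the divergence theorem; you simply fix $|\nu|=\sqrt{-\Lambda}$ from the start, so the numerator is directly a boundary integral, whereas the paper keeps $|\nu|$ free and optimises afterwards. Drop the averaging detour, since your claim that a single direction $\omega$ will not suffice is false, as you yourself later note.
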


\begin{proof} This is, in essence, an analogue of the bound on the first eigenvalue of  the Robin problem in \cite[Theorem 2.3]{GioSmi}, \cite[Lemma 2.1]{DanersKennedy}, and \cite[Proposition 4.12]{Bucur17}, and our proof is just an adaptation. 
In order to prove \eqref{eq:Bucsigma1}, we use the variational principle \eqref{eq:muk_minimax1} with the test function
\[
W(x):=\exp(\mydotp{x, \nu}),
\]
where a constant vector $\nu=(\nu_1,\dots,\nu_d)$ will be chosen later. As $|\nabla W(x)|=|\nu|\cdot |W(x)|$, we have
\[
\sigma_1^{(\Lambda)}\le \frac{(|\nu|^2-\Lambda)\|W\|^2_{L^2(\Omega)}}{\|W\|^2_{L^2(\pa)}}.
\]
As in \cite{GioSmi}, \cite{DanersKennedy}, \cite{Bucur17}, it is easy to show that $\|W\|^2_{L^2(\pa)}>2|\nu|\cdot \|W\|^2_{L^2(\Omega)}$, and thus
\[
\sigma_1^{(\Lambda)}\le \frac{|\nu|^2-\Lambda}{2|\nu|}.
\]
Choosing now any vector $\nu$ of length $\sigma_1^{(\Lambda)}>0$ immediately proves \eqref{eq:Bucsigma1}.
\end{proof}

On the basis of some numerical experiments, we suggest the following possible generalisation of Theorem \ref{thm:sqb}.

\begin{conjecture}\label{conj:sqroot} 
Let $\Omega\subset\mathbb{R}^d$ be a bounded Lipschitz domain, and let $\sigma^{(\Lambda)}$, $\Lambda\le 0$, be a fixed analytic eigenvalue branch of $\DtN_\Lambda$. Then 
\[
\sigma^{(\Lambda)}-\sigma^{(0)}\le \sqrt{-\Lambda}\qquad\text{for all }\Lambda\le 0.
\] 
\end{conjecture}

Conjecture \ref{conj:sqroot} holds for balls in $\mathbb{R}^d$, see \cite[formula (D5)]{Grebenkov25}.

\section{Eigenvalue asymptotics}\label{sec:asympt}
\subsection{Asymptotic behaviour of the principal eigenvalue as $\Lambda \to 0$}\label{sec:nearzero}

In the limit $\Lambda \to 0$, the lowest eigenvalue
$\sigma_1^{(\Lambda)}$ tends to $\sigma_1^{(0)}=0$.  The corresponding normalised eigenfunction is constant, $u_1^{(0)}= |\pa|_{d-1}^{-1/2}$, and thus $U_1^{(0)}= |\pa|_{d-1}^{-1/2}$. Therefore,  we immediately deduce from Proposition \ref{prop:dLambda} that
\[
\sigma_1^{(\Lambda)} = -\Lambda \frac{|\Omega|_d}{|\pa|_{d-1}} +O(\Lambda^2)  \qquad \text{as }\Lambda \to 0,
\]
cf. Proposition \ref{prop:sigma1upper}. 
The next term in the Taylor series expansion of $\sigma_1^{(\Lambda)}$ near zero can also be found by Proposition \ref{prop:dLambda}.  However the expression given in 
\cite[Appendix B]{Grebenkov19c} is easier: 
\begin{equation}\label{eq:d2mu0}
\sigma_1^{(\Lambda)} =-\Lambda \frac{|\Omega|_d}{|\pa|_{d-1}}  - \Lambda^2\frac{2|\Omega|_d^2}{|\pa|_{d-1}^3} \sum_{k=2}^\infty \frac{1}{\lambda_k^\Neu} \left\|U_k^\Neu\right\|^2_{L^1(\pa)} + O(\Lambda^3)  \qquad \text{as }\Lambda \to 0.
\end{equation}
In addition, it illustrates the concavity of $\sigma_1^{(\Lambda)}$, cf.\ Remark \ref{rem:dLambda}. The right-hand side of \eqref{eq:d2mu0} can be evaluated explicitly for the unit disk $\mathbb{D}$ or more generally for a unit ball $\mathbb{B}^d$, $d\ge 2$, yielding, for example,
\[
\begin{split}
\sigma_1^{(\Lambda)}(\mathbb{D}) &= -\frac{\Lambda}{2}-\frac{\Lambda^2}{8}+O(\Lambda^3)  \qquad \text{as }\Lambda \to 0,\\
\sigma_1^{(\Lambda)}(\mathbb{B}^3) &= - \frac{\Lambda}{3}-\frac{2\Lambda^2}{45}+O(\Lambda^3)  \qquad \text{as }\Lambda \to 0,
\end{split}
\]
for the unit disk and the three-dimensional unit ball, respectively. 

\subsection{Asymptotic behaviour as $\Lambda$ tends to a Dirichlet eigenvalue or to $-\infty$}\label{sec:lambda_large}
For any real-analytic  branch $\sigma^{(\Lambda)}$ of the eigenvalues of the Dirichlet-to-Neumann map, there are two distinguished asymptotic regimes. 

When $\Lambda$ is positive, increasing, and tends from below to an eigenvalue $\lambda^\Dir_\ell=\dots=\lambda^\Dir_{\ell+m-1}$ of multiplicity $m$ of the Dirichlet Laplacian, then the first $m$ eigenvalues $\sigma_k^{(\Lambda)}$, $k=1,\dots, m$, tend to $-\infty$, as illustrated in Figures \ref{fig:plotDtNdisk} and \ref{fig:plotDtNsquare}. Using the known asymptotics of the eigenvalues of the Robin Laplacian $-\Delta^{\Rob,\gamma}$ as 
$\gamma\to+\infty$, see, e.g., \cite{BelBBT18},  it is easily deduced that
\[
\sigma_k^{(\Lambda)}=-\frac{\eta_k}{\lambda^\Dir_\ell-\Lambda}+O(1)\qquad\text{as }\Lambda\to(\lambda_\ell^\Dir)^-,\quad k=1,\dots, m,
\]
where $\eta_1\ge\dots\ge \eta_m>0$ are the eigenvalues of the $m\times m$ Gram matrix
\[
\left(\myscal{\partial_n U^\Dir_i,\partial_n U^\Dir_j}_{L^2(\pa)}\right)_{i,j=\ell}^{\ell+m-1}
\]
of linearly independent functions $\partial_n U^\Dir_\ell,\dots,\partial_n U^\Dir_{\ell+m-1}$.

To study the asymptotic behaviour of the eigenvalues $\sigma_k^{(\Lambda)}$ as $\Lambda\to-\infty$, we first look at the behaviour of the eigenvalues of the Robin Laplacian $\lambda_k^{\Rob,\gamma}$ as $\gamma\to-\infty$. For any bounded Lipschitz domain $\Omega$, there exist constants $p_{\Omega}\ge 1$, $\Gamma_{\Omega,1}<0$,  such that 
\[
-p_\Omega\gamma^2\le \lambda_1^{\Rob,\gamma}\le -\gamma^2\qquad\text{for }\gamma<\Gamma_{\Omega,1},
\]
see \cite[Lemma 2.7]{Khalile20} for the lower bound, and  \cite[Proposition 4.12]{Bucur17} for the upper bound (valid with $\Gamma_{\Omega,1}=0$). Similarly, for any $k\ge 2$ there exist constants
$0<P_{\Omega,k}$, $\Gamma_{\Omega,k}<0$,  such that \cite{DiePanprivate}
\[
\lambda_k^{\Rob,\gamma}\le -P_{\Omega,k}\gamma^2 \qquad\text{for }\gamma<\Gamma_{\Omega,k}.
\]

Switching now to the asymptotics of $\sigma_k^{(\Lambda)}$ as $\Lambda$ goes to $-\infty$, we deduce by the Robin--Dirichlet-to-Neumann duality, that for a bounded Lipschitz domain $\Omega$ and  for each fixed $k$, we have
\[
\sigma_k^{(\Lambda)} = O\left(\sqrt{-\Lambda}\right)  \qquad\text{as }\Lambda \to -\infty.
\]
In many situations, discussed below, we further have
\begin{equation}\label{eq:muk_ck}
\sigma_k^{(\Lambda)} = c_k  \sqrt{-\Lambda} +o\left(\sqrt{-\Lambda}\right)  \qquad\text{as }\Lambda \to -\infty,
\end{equation}
with some constants  $c_k$ depending on the geometry of $\Omega$, or more precisely of its boundary $\pa$.  
The validity of \eqref{eq:muk_ck} and the values of the coefficients $c_k$ vary depending on the smoothness of $\pa$, but we note an interesting recent example \cite{DiePan}
which shows that \eqref{eq:muk_ck} may not hold even for the principal eigenvalue $\sigma_1^{(\Lambda)}$ if $\Omega$ is only assumed to be Lipschitz.

For the boundary $\pa$ of class $C^1$, 
we have, by  inverting the result of \cite{DanersKennedy}, 
\begin{equation}\label{eq:sigmakinfsmooth}
\sigma_k^{(\Lambda)} = \sqrt{-\Lambda} + o\left(\sqrt{-\Lambda}\right)\qquad\text{as }\Lambda\to-\infty,
\end{equation}
for any fixed $k$, and therefore we can take $c_k=1$ for all $k$ in \eqref{eq:muk_ck}.
Note that for the boundary of class $C^{1,1}$ if $d=2$ or of class $C^{2,\alpha}$ with  $\alpha>0$ if $d>2$, we can use Theorem  \ref{thm:Hor} to improve the remainder estimate in \eqref{eq:sigmakinfsmooth} to $O(1)$.

In order to discuss some improvements to \eqref{eq:sigmakinfsmooth} when the boundary has higher smoothness, we first recall  the following asymptotic behaviour of the eigenvalues of the Robin Laplacian in domains $\Omega\subset\R^d$ with  $C^3$ boundary \cite{Pankrashkin15}:
\begin{equation}\label{eq:lambdak_Pank}
\Lambda_k^{\Rob,\gamma} = - \gamma^2 + (d-1) H_{\max} \gamma + O\left(|\gamma|^{\frac{2}{3}}\right)  \qquad \text{as }\gamma \to -\infty,
\end{equation}
where $H_{\max}=\max\limits_{x\in\pa} H(x)$ is the maximum value of the mean curvature $H(x)$ at the point $x\in \pa$.  In the
case of bounded, star-shaped, $C^2$ smooth domains, one has $H_{\max}\geq
(\omega_d/|\Omega|_d)^{\frac{1}{d}}$, where $\omega_d$ is given by \eqref{eq:omegad}.
For simply-connected bounded planar
domains with $C^4$ boundary, two-sided two-term estimates for $\gamma\to-\infty$ of
the eigenvalues $\lambda_k^{\Rob,\gamma}$ that involve the maximal and
minimal curvatures of $\pa$, as well as eigenvalues of an auxiliary
Schr\"{o}dinger operator based on the curvature, were deduced in
\cite{Exner14}.  The above
asymptotic result was further refined  for bounded planar domains with $C^\infty$ boundary in 
\cite{Helffer18}. Parametrising the boundary $\pa$ by the arc-length $s$ and assuming that the maximal curvature is
attained in a single boundary point, say, $H_{\max}=H(s_0)$ and that this maximum
is non-degenerate, $H''(s_0) > 0$, they proved the following: for any fixed $k\in\mathbb{N}$, there exists a sequence
$\left(\beta_{k,j}\right)_{j=1}^\infty$ such that
\[
\Lambda_k^{\Rob,\gamma} =  - \gamma^2 + H_{\max}\gamma + (2k-1) \sqrt{H''(s_0)/2}\, |\gamma|^{\frac12} + \sum\limits_{j=0}^J \beta_{k,j} |\gamma|^{-j/2} + 
o\left(|\gamma|^{-J/2}\right)\qquad\text{as }\gamma\to -\infty
\]
for any positive $J \in \mathbb{N}$. 
This expansion also yields the leading order of the spectral gap,
\[
\Lambda_{k+1}^{\Rob,\gamma}  -\Lambda_k^{\Rob,\gamma}  =  \sqrt{H''(s_0)/2}\, |\gamma|^{\frac12}+O(|\gamma|) \qquad\text{as }\gamma\to -\infty.
\]

Returning to the case  $\Omega\subset\mathbb{R}^d$ with a $C^3$ boundary, and using the Robin--Dirichlet-to-Neumann duality, we can now invert \eqref{eq:lambdak_Pank} and get, in the leading order, 
\[
\sigma_k^{(\Lambda)} = \sqrt{-\Lambda} - \frac{(d-1)H_{\max}}{2} + O((-\Lambda)^{-\frac12})   \qquad \text{as } \Lambda \to -\infty.
\]

If the boundary is not smooth, e.g., it has corners like in polygonal
domains, the coefficients $c_k$ in \eqref{eq:muk_ck} may be smaller
than one for some $k$.  We are not aware of explicitly stated results
on $c_k$ for the Dirichlet-to-Neumann map; however, the Robin--Dirichlet-to-Neumann duality allows us to translate various asymptotic results
known for the Robin Laplacian in terms of eigenvalues of $\DtN_\Lambda$.  For instance, it was shown in   \cite{Levitin08}, see also \cite{Lacey}, that if $\Omega\subset \R^2$ is a curvilinear polygon with the
smallest angle $\alpha$, the first Robin eigenvalue 
behaves as $\Lambda_1^{\Rob,\gamma} = - C_1 \gamma^2 + o(\gamma^2)$ as $\gamma \to -\infty$, with 
\[
C_1 = \begin{cases}
\frac{1}{\sin^2(\alpha/2)}\quad&\text{if }\alpha < \pi,\\
1\quad&\text{otherwise}.
\end{cases}
\]
Inverting this relation, we get in \eqref{eq:muk_ck},  
\begin{equation}\label{eq:c1}
 c_1 = \begin{cases}
\sin(\alpha/2)\quad&\text{if }\alpha < \pi,\\
1\quad&\text{otherwise}.
\end{cases}
\end{equation}

As outlined in 
\cite{Helffer15}, each corner of the boundary can be intuitively
viewed as a ``geometric well'', and it is the deepest well that
determines the leading term of the lowest eigenvalue.  The authors addressed
a natural question of what happens if there are several wells of the
same depth, i.e., several corners with the same angle.  More precisely,
they considered a planar domain with two equal angles (an isosceles
triangle and an infinite bi-angle domain) separated by distance $L$,
and derived the large-$L$ asymptotic relation for the two first
eigenvalues.  The correction to the leading term $-C_1\gamma^2$
exponentially decays as $L$ increases, similarly to tunnelling effects
between two identical wells in quantum mechanics. 
Extensions of these
results, as well as a power-law decay of the correction term for
curvilinear polygons and an exponentially fast decay for polygonal
domains, were discussed in \cite{Khalile18}, \cite{Khalile18b}, \cite{Khalile20},  still within the
framework of the Robin Laplacian. At the same time, very little is known for
non-smooth boundaries in higher dimensions; some results in this direction can 
be found in \cite{Pankrashkin22}.

We finish this section by stating a conjectural extension of \eqref{eq:muk_ck} for all $k$ for an
arbitrary polygon with $n$ vertices and interior angles $\alpha_1, \alpha_2, \dots, \alpha_n$, which  
was proposed in \cite{Chaigneau24}. We reformulate it slightly. 
Let, for $\alpha\in (0,2\pi)$, 
\[
\overline{k}(\alpha):=\max\left\{k\in\{0\}\cup\mathbb{N}: 2k-1<\frac{\pi}{\alpha}\right\},
\]
and let
\[
\mathcal{C}(\alpha):=\begin{cases}
\varnothing\qquad&\text{if }\alpha \ge \pi\text{ (and therefore $\overline{k}(\alpha)=0$)},\\
\left\{\sin\frac{\alpha}{2},\sin\frac{3\alpha}{2}, \dots, \sin\frac{(2\overline{k}(\alpha)-1)\alpha}{2}\right\},\qquad&\text{if }k(\alpha) > 0.
\end{cases}
\]

Consider the multi-set 
\[
\widetilde{\mathcal{C}}=\bigcup_{i=1}^n \mathcal{C}(\alpha_i),
\]
with $K := \sum_{i=1}^n \overline{k}(\alpha_i)$ positive elements, rearranging them in non-decreasing order:
\[
\widetilde{\mathcal{C}}=\left\{\tilde{c}_1\le\dots\le \tilde{c}_K\right\}.
\]
Further, set $\tilde{c}_k:=1$ for $k>K$. 

\begin{conjecture}[\cite{Chaigneau24}]\label{conj:polygons}
For any curvilinear polygon, the asymptotics  \eqref{eq:muk_ck} holds with 
\[
c_k=\tilde{c}_k, \qquad k\in\mathbb{N}.
\]
\end{conjecture}

In order to explain this conjecture, consider a Robin Laplacian $-\Delta^{\Rob,1}_{S_\alpha}$ with the Robin parameter $\gamma=1$  in an infinite straight planar sector $S_\alpha$ of angle $\alpha$. This operator has the essential spectrum $[-1,+\infty)$ and may also   
have a finite number $m(\alpha)$ of discrete eigenvalues below $-1$. By \cite{Lyalinov21}, 
\begin{equation}\label{eq:Lyal}
m(\alpha)\ge \overline{k}(\alpha),
\end{equation}
and the $\overline{k}(\alpha)$ eigenvalues  are known explicitly, $\lambda^{\Rob,1}\in \left\{-\tilde{c}^{-2}, \tilde{c}\in \mathcal{C}(\alpha)\right\}$. An open question is whether there are any other eigenvalues of $-\Delta^{\Rob,1}_{S_\alpha}$ below the essential spectrum, or if bound \eqref{eq:Lyal} is in fact an equality. If we assume the latter, the validity of Conjecture \ref{conj:polygons} would follow from the results of  \cite{Khalile18} by the Robin--Dirichlet-to-Neumann duality.

There are special cases for which the equality in \eqref{eq:Lyal} is known to hold: if $\alpha\ge\pi$, then $m(\alpha)=\overline{k}(\alpha)=0$   \cite{Levitin08}, and if $\alpha\in\left[\frac{\pi}{3},\pi\right)$, then $m(\alpha)=\overline{k}(\alpha)=1$ \cite{Khalile18b}. Therefore we have the following result, cf.\ \cite[Corollary 1.3]{Khalile18},

\begin{proposition} 
If all angles of a curvilinear polygon are not less than $\frac{\pi}{3}$, Conjecture \ref{conj:polygons} holds for such a polygon. 
\end{proposition}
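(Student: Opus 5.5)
The plan is to transfer the known Robin asymptotics from \cite{Khalile18} to $\DtN_\Lambda$ through the quantitative Robin--Dirichlet-to-Neumann duality of Proposition \ref{prop:dualityfurther}.

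\textbf{Step 1: reduce to the model sector problem.} By \cite{Khalile18}, for a curvilinear polygon the Robin eigenvalues satisfy
\[
\lambda_k^{\Rob,\gamma}=-\mathcal{E}_k\gamma^2+o(\gamma^2)\qquad\text{as }\gamma\to-\infty .
\]
Here $\mathcal{E}_1\ge\mathcal{E}_2\ge\dots$ is the non-increasing rearrangement of the union, over all vertices, of the discrete eigenvalues of the sector operators $-\left(-\Delta^{\Rob,1}_{S_{\alpha_i}}\right)$, continued by $\mathcal{E}_k=1$ afterwards (the essential-spectrum threshold of the smooth boundary part). By the same scaling, a sector with $\gamma<0$ has spectrum $\gamma^2$ times that of the model operator. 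With the sign conventions of the paper one should double-check the orientation of the Robin sign, but this is the statement referenced as \cite[Corollary 1.3]{Khalile18}.

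\textbf{Step 2: identify the sector constants.} When all $\alpha_i\ge\pi/3$ we have $m(\alpha_i)=\overline{k}(\alpha_i)$ by \cite{Levitin08} for $\alpha_i\ge\pi$ and by \cite{Khalile18b} for $\alpha_i\in[\pi/3,\pi)$. In these ranges $\overline{k}\le1$. Indeed, $2k-1<\pi/\alpha\le3$ forces $k\le1$, and $\overline{k}=1$ exactly when $\alpha<\pi$. So each acute-or-obtuse (non-reflex, $<\pi$) vertex contributes the single explicit eigenvalue $-\sin^{-2}(\alpha_i/2)$, and the other vertices contribute nothing. Hence $\mathcal{E}_k=\tilde c_k^{-2}$ for all $k$, using $\tilde c_k=1$ for $k>K$.

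\textbf{Step 3: invert via duality.} Fix $k$ and let $\Lambda\to-\infty$; then $\mathcal{N}_{-\Delta^\Dir}(\Lambda)=0$, so Proposition \ref{prop:dualityfurther} gives $\Lambda=\lambda_k^{\Rob,-\sigma}$ with $\sigma=\sigma_k^{(\Lambda)}$. We know $\sigma_k^{(\Lambda)}\to+\infty$, for instance from $\sigma_k^{(\Lambda)}\ge\sigma_1^{(\Lambda)}$ together with $\lambda_1^{\Rob,\gamma}\ge-p_\Omega\gamma^2$. So $\gamma=-\sigma\to-\infty$ and Step 1 applies:
\[
-\Lambda=\tilde c_k^{-2}\sigma^2(1+o(1)),
\]
that is, $\sigma_k^{(\Lambda)}=\tilde c_k\sqrt{-\Lambda}+o(\sqrt{-\Lambda})$, which is \eqref{eq:muk_ck}.

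\textbf{Main obstacle.} The delicate point is the inversion in Step 3. It needs monotonicity of $\gamma\mapsto\lambda_k^{\Rob,\gamma}$, which is nondecreasing in $\gamma$, so that the duality relation uniquely determines $\sigma$. Ties $\tilde c_k=\tilde c_{k+1}$ cause no trouble, because only the leading order is claimed. The other subtle point is verifying that \cite{Khalile18} applies to curvilinear polygons and indeed covers the case $m(\alpha)=\overline{k}(\alpha)$ needed in Step 2.
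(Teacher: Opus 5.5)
Your proposal is correct and follows the paper's route: the Robin asymptotics of \cite{Khalile18} for curvilinear polygons, the equality $m(\alpha)=\overline{k}(\alpha)$ for $\alpha\ge\pi/3$ from \cite{Levitin08} and \cite{Khalile18b}, and transfer to $\DtN_\Lambda$ via Proposition \ref{prop:dualityfurther}. One minor point: the inversion in your Step 3 does not actually need monotonicity in $\gamma$ or uniqueness of $\sigma$, because Proposition \ref{prop:dualityfurther} already gives $\Lambda=\lambda_k^{\Rob,-\sigma_k^{(\Lambda)}}$ with the correct index, and together with $\sigma_k^{(\Lambda)}\to+\infty$ this is all the argument uses.
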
 

\subsection{Weyl's asymptotic law}\label{sec:weyl}

Recalling notation \eqref{eq:NlambdaA}, we denote the eigenvalue counting function of the Dirichlet-to-Neumann map by
\[
\N^{(\Lambda)}(\sigma):=\N_{\DtN_\Lambda}(\sigma)=\#\left\{k: \sigma_k^{(\Lambda)}\le\sigma\right\},
\]
and introduce additionally the heat trace
\[
\mathcal{Z}^{(\Lambda)}(t) = \sum\limits_{k=1}^\infty \er^{-\sigma_k^{(\Lambda)}t},\qquad t>0.
\]

The asymptotic behaviour of the eigenvalues $\sigma_k^{(\Lambda)}$ for 
large $k$ is usually characterised in terms of the asymptotics of the eigenvalue counting
function $\N^{(\Lambda)}(\sigma)$ as $\sigma\to+\infty$ or of the asymptotics  the heat trace $\mathcal{Z}^{(\Lambda)}(t)$ as $t\to 0^+$.

If the boundary $\pa$ is smooth, and $\Lambda\notin\Spec\left(-\Delta^\Dir_\Omega\right)$,  then $\DtN_\Lambda$ is a pseudodifferential operator of order one, and 
\begin{equation}  \label{eq:Nlambda_mu}
\N^{(\Lambda)}(\sigma) = \frac{\omega_{d-1} \, |\pa|_{d-1}}{(2\pi)^{d-1}} \, \sigma^{d-1} + O\left(\sigma^{d-2}\right)  \qquad\text{as }\sigma \to +\infty,
\end{equation}
where $\omega_d$ is given by \eqref{eq:omegad}, by the standard Weyl's one-term law \cite{Horm}.

Inverting \eqref{eq:Nlambda_mu}, one gets the asymptotic behaviour for 
large eigenvalues, 
\begin{equation}  \label{eq:Weyl_muk}
\sigma_k^{(\Lambda)} = 2\pi \left(\frac{k}{\omega_{d-1} |\pa|_{d-1}}\right)^{\frac{1}{d-1}} + O\left(1\right)
\qquad\text{as }k\to \infty.
\end{equation} 
Using the Laplace transform, one can also obtain the short time asymptotics of  the heat trace  for any fixed
$\Lambda \in \R\backslash \Spec\left(-\Delta^\Dir_\Omega\right)$: 
\[
\mathcal{Z}^{(\Lambda)}(t) = \frac{\Gamma\left(\frac{d}{2}\right) \, |\pa|_{d-1}}{\pi^{\frac{d}{2}}} t^{1-d} + o\left(t^{1-d}\right)\qquad\text{as }t\to 0^+,
\]
see \cite{Polterovich15}, \cite{Liu15} for further terms of this expansion in the case $\Lambda=0$.

In fact, as follows from \cite{Horm}, for smooth domains $\Omega$ there is a pointwise version of Weyl's law \eqref{eq:Nlambda_mu}. Namely,  for any $x\in \pa$,
\begin{equation}
\label{eq:pointWeyl}
\sum_{\sigma_k^{(\Lambda)} \le \sigma} \left|u_k^{(\Lambda)}(x)\right|^2 = \frac{\omega_{d-1}}{(2\pi)^{d-1}} \, \sigma^{d-1} + O(\sigma^{d-2})  \qquad\text{as }\sigma \to +\infty.
\end{equation}
In particular, the error term estimate implies an upper bound
\begin{equation}\label{linftyeig}
\left\| u_k^{(\Lambda)}\right\|_{L^\infty(\pa)} = O\left(\left(\sigma_k^{(\Lambda)}\right)^{\frac{d-2}{2}}\right) = O\left(k^{\frac{d-2}{2d-2}}\right)\qquad\text{as }k\to\infty.
\end{equation}
Moreover, for $\Lambda=0$ a similar bound holds, by the maximum principle, for any bulk eigenfunction $U_k^{(0)}$. 

For smooth bounded planar domains, the eigenvalue asymptotics \eqref{eq:Weyl_muk}, which in this case reads
\[
\sigma_k^{(\Lambda)} = \frac{\pi k}{|\pa|_1} + O(1),
\]
can be made significantly more precise. 
If $\Lambda=0$, then the eigenvalues of the Dirichlet-to-Neumann map for a simply connected $\Omega\subset\mathbb{R}^2$ are superpolynomially close  as $k \to \infty$ to those of the disk with the same perimeter \cite{Rozenblyum86}, \cite{Edward93},
\begin{equation}\label{eq:muk_asympt_k}
\sigma_{2k}^{(0)} = \sigma_{2k+1}^{(0)} + O(k^{-\infty}) = \frac{2\pi k}{|\pa|_1} + O(k^{-\infty}),
\end{equation}
where the remainder term $r_k=O(k^{-\infty})$ means that $\lim\limits_{k\to \infty} k^n
r_k = 0$ for every $n\in {\mathbb N}$.  This result was further
extended to the case when the boundary $\pa$ is not connected in  \cite{Girouard14}. As a corollary, 
it was shown that the number of connected
components of the boundary, as well as their lengths, are invariants
of the Steklov spectrum.

Such a rapid convergence of the eigenvalues $\sigma_k^{(0)}$ to those for a
disk of perimeter $|\pa|_1$ is specific for dimension two. 
In a way, it is a consequence of the fact that the boundary $\pa$ is a smooth closed curve and hence 
is locally isometric to a circle. 
For similar reasons, the eigenfunctions
$u_k^{(0)}$ of smooth bounded simply-connected planar domains are also close to those for a disk, i.e. they can be expressed as trigonometric functions of the arc-length parameter plus a decaying remainder. A rigorous form of this statement with a $o(1)$ bound on the error term can be found in \cite{Shamma71} for smooth bounded domains. In fact, as follows from the proof in \cite{Shamma71}, $C^4$ regularity of the boundary is enough, and one expects that higher regularity implies better bounds. In particular, for simply-connected domains with real-analytic boundary it was shown in \cite[Section 3]{Polterovich19} that the error term for the eigenfunction asymptotics  is exponentially small, and similarly for the eigenvalue asymptotics \eqref{eq:muk_asympt_k}.

In \cite{Lagace20}, asymptotics  \eqref{eq:muk_asympt_k} was extended to the case $\Lambda
\ne 0$ to give  the complete asymptotic expansion for the
eigenvalues $\sigma_k^{(\Lambda)}$ for a simply-connected planar domain
$\Omega$ with a smooth boundary:
\[
\sigma_{2k}^{(\Lambda)} = \sigma_{2k+1}^{(\Lambda)} + O\left(k^{-\infty}\right) = \frac{2\pi k}{|\pa|_1} 
+ \frac{\Lambda}{k} \sum\limits_{n=0}^\infty \frac{s_k(\lambda;\Omega)}{k^n}\qquad\text{as }k\to \infty,
\]
where $s_n(\lambda;\Omega)$ are polynomials in $\lambda$ of degree at
most $n$, with
\[
s_0(\lambda;\Omega) = - \frac{|\pa|_1}{4\pi} \,,  \qquad 
s_1(\lambda;\Omega) = \frac{|\pa|_1}{4\pi}.
\]

If the boundary is not assumed to be smooth, the pseudodifferential techniques cannot be used. 
Historically, the first result on the asymptotics of the eigenvalue counting function of the Dirichlet-to-Neumann map was due to \cite{Sandgren55}  in the case $\Lambda = 0$ for $C^2$ boundary $\pa$:
\begin{equation}  \label{eq:N0_mu}
\N^{(0)}(\sigma) =  \frac{\omega_{d-1} \, |\pa|_{d-1}}{(2\pi)^{d-1}} \, \sigma^{d-1} + o(\sigma^{d-1})  \qquad\text{as }\sigma \to +\infty.
\end{equation}

In \cite[Theorem 11]{Girouard22}, the inequality \eqref{eq:muk_lambda_estimate} has been used to prove \eqref{eq:Nlambda_mu}  for bounded domains with $C^{2,\alpha}$ boundary for $d\ge 3$ or $C^{1,1}$ boundary in the planar case, provided $\Lambda \le 0$. 
 We also note that the asymptotic formula  \eqref{eq:N0_mu} was extended to bounded domains with piecewise $C^1$ boundary \cite{Agranovich06},  and  more recently  to bounded domains with Lipschitz boundary in two dimensions \cite{Karpukhin22} and in higher dimensions \cite{Rozenblum23}. It would be interesting  to verify if these results can be generalised to the case  $\Lambda \neq 0$.  Another compelling question is to investigate the analogues of the estimates \eqref{eq:pointWeyl} and 
 \eqref{linftyeig} for non-smooth boundaries.

If $\Omega$ is  a curvilinear polygon, then according to  \cite{Levitin22},  subject to some minor constraints, 
one can construct a sequence of quasi-eigenvalues $\tilde{\sigma}_k$ as the sequence of roots of  an explicit trigonometric function depending on the side-lengths and angles of the polygon (or, equivalently, as the sequence of eigenvalues of  a particular \emph{quantum graph}), such that 
\begin{equation}\label{eq:Spoly}
\sigma_k^{(0)} = \tilde{\sigma}_k  + O(k^{-\epsilon})\qquad\text{as }k\to \infty,
\end{equation}
with some $\epsilon>0$ depending on $\Omega$. This asymptotics is closely related to the so-called sloshing problem (a mixed Neumann--Steklov eigenvalue problem, see \S\ref{sec:mixed}), and the sloping beach problem, both  arising in hydrodynamics.

\section{Eigenfunctions}\label{sec:nodal}
\subsection{Regularity of bulk eigenfunctions}\label{sec:bulkregularity}

The following result holds concerning the  regularity of bulk eigenfunctions on Lipschitz domains.

\begin{theorem}\label{thm:regularity}
Let $\Omega \subset {\mathbb R}^d$ be a bounded Lipschitz domain.
The  bulk eigenfunctions $U_k^{(\Lambda)}$, $k=1,2,\dots$,  are real-analytic in $\Omega$ and continuous up to the boundary: $U_k^{(\Lambda)} \in C(\overline{\Omega})$. Moreover, if $\Omega$  has $C^{1,1}$ boundary, then $U_k^{(\Lambda)} \in C^1(\overline{\Omega})$.
\end{theorem}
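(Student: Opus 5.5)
Interior real-analyticity is standard. $U=U_k^{(\Lambda)}\in H^1(\Omega)$ is a weak solution of $-\Delta U-\Lambda U=0$, an elliptic equation with constant coefficients. Weyl's lemma combined with elliptic regularity gives $U\in C^\infty(\Omega)$. Analytic hypoellipticity of constant-coefficient elliptic operators (for instance, via Cauchy-type estimates on derivatives, or via the Bessel-function kernel representation on small balls) then gives real-analyticity in $\Omega$. The substance of the theorem is therefore the boundary behaviour.

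For continuity up to the boundary, I would work with the boundary datum $u=u_k^{(\Lambda)}$. We know $u\in H^1(\pa)$ by \cite{MitTay}. The plan is to bootstrap its regularity on $\pa$ using the Robin--Dirichlet-to-Neumann duality of Proposition \ref{prop:DtNRduality}. By that duality, $U$ is an eigenfunction of the Robin Laplacian $-\Delta^{\Rob,-\sigma}$ with $\sigma=\sigma_k^{(\Lambda)}$. This is the key step: for Robin problems on bounded Lipschitz domains, De Giorgi--Nash--Moser theory adapted to Robin boundary conditions (as in Daners' work, or Nittka's H\"older estimates for Robin problems) yields $U\in L^\infty(\Omega)$ and then $U\in C^{0,\alpha}(\overline{\Omega})$ for some $\alpha>0$.

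\begin{itemize}
\item The $L^\infty$ bound comes from a Moser iteration, using the trace inequality $\|V\|_{L^2(\pa)}^2\le \epsilon\|\nabla V\|^2+C_\epsilon\|V\|^2$ to absorb the boundary term, whatever the sign of $\sigma$.
\item H\"older continuity up to the boundary follows by locally flattening the Lipschitz boundary with a bi-Lipschitz map. This produces a divergence-form equation with bounded measurable coefficients and a conormal Robin condition, to which the boundary De Giorgi estimates apply.
\end{itemize}

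An alternative route is to note that $u\in H^1(\pa)\cap L^\infty$ is continuous after this argument, and then invoke the solvability of the $\Lambda$-Dirichlet problem with continuous data on Lipschitz domains (every boundary point is regular, by the exterior cone condition), together with uniqueness, to identify $U$ with the classical solution in $C(\overline{\Omega})$. I expect the main obstacle to be the uniform boundary H\"older estimate on a merely Lipschitz domain with a non-positive Robin coefficient. I would handle it by citing the Robin regularity literature rather than redoing the iteration.

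For the $C^{1,1}$ case, I would bootstrap in Sobolev spaces. With $\partial_n U=\sigma u$ and $u\in H^{1/2}(\pa)$, Neumann-problem regularity on $C^{1,1}$ domains (Grisvard) gives $U\in H^2(\Omega)$. Hence $u\in H^{3/2}(\pa)$, and the Neumann datum $\sigma u\in W^{1-1/p,p}(\pa)$ for increasing $p$ via the Sobolev trace and embedding theorems. The $W^{2,p}$ Neumann estimates on $C^{1,1}$ domains (Agmon--Douglis--Nirenberg, valid for $C^{1,1}$ boundaries) then give $U\in W^{2,p}(\Omega)$ for every $p<\infty$. Finitely many iterations reach $p>d$, and Morrey's embedding yields $U\in C^{1,\beta}(\overline{\Omega})\subset C^1(\overline{\Omega})$. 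The only delicate point here is checking that each trace step gains enough integrability to pass $p>d$ in finitely many steps, which is routine.
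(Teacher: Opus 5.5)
Your proposal is correct and follows the same route as the paper. The paper simply cites the relevant results: interior analyticity from constant-coefficient ellipticity, continuity up to the boundary by regarding $U_k^{(\Lambda)}$ as a Robin eigenfunction and invoking Robin regularity on Lipschitz domains (Daners), and the $C^{1,1}$ case from standard Neumann-problem regularity (Decio), which you make explicit through the $W^{2,p}$ bootstrap; your ``alternative route'' is superfluous, since it presupposes the continuity of $u$ that the H\"older estimate has already delivered.
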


The real-analyticity of the bulk eigenfunctions follows immediately from the fact that they are solutions of an elliptic equation with constant coefficients. The proof of the continuity up to the boundary for Lipschitz domains can be found in  \cite[Section 4]{Daners2009}:  while the result is stated there only for $\Lambda=0$, it holds for any $\Lambda$ as follows from the results on the regularity of Robin eigenfunctions.   
Another proof of continuity up to the boundary for $\Lambda=0$ can be found in \cite[Section 2]{Decio2022}. 
The last part of Theorem \ref{thm:regularity} is  contained in \cite[Proposition 2]{Decio2022}. Once again, it is stated only for $\Lambda=0$, but the argument extends in a straightforward way to the case of an arbitrary $\Lambda$.
 
\subsection{Courant's theorem}\label{sec:courant}

One of the fundamental results in spectral geometry is Courant's nodal domain theorem \cite[section 4.1]{Levitin}.
Recall that a nodal domain of a function is a connected component of the complement to its zero set.
The standard Courant's theorem states that  a Laplace eigenfunction (with Dirichlet, Neumann,  or Robin boundary conditions) corresponding to an eigenvalue with index $k$ has at most $k$ nodal domains. The proof of  this result is based on  the variational principle and the unique continuation property of eigenfunctions.  It can be extended essentially verbatim to the case of Steklov eigenfunctions  $U_k^{(0)}$ \cite{Kuttler69},  \cite{Karpukhin14}.  An extension to the general case of bulk eigenfunctions $U_k^{(\Lambda)}$ for an arbitrary $\Lambda$ is more subtle and has been recently obtained by Hassannezhad and Sher. It is a consequence of Courant's nodal domain theorem for Robin eigenfunctions and the Robin--Dirichlet-to-Neumann duality as stated in  Proposition \ref{prop:dualityfurther}.

\begin{theorem}[{\cite{Hassannezhad22}}]
\label{thm:Courant}
Let $\Omega\subset\R^d$ be a bounded domain with Lipschitz boundary
$\pa$, and let $U_k^{(\Lambda)}$ be a bulk eigenfunction of $\DtN_\Lambda$
corresponding to an eigenvalue $\sigma_k^{(\Lambda)}$.  Then $U_k^{(\Lambda)}$ has
at most $k+m$ nodal domains, where $m=\mathcal{N}_{-\Delta^\Dir_\Omega}(\Lambda)$.
\end{theorem}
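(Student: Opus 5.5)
The plan is to reduce the statement to Courant's nodal domain theorem for Robin eigenfunctions, using the quantitative duality of Proposition~\ref{prop:dualityfurther}. Assume first that $\Lambda\notin\Spec\left(-\Delta^\Dir_\Omega\right)$, and set $\sigma:=\sigma_k^{(\Lambda)}$. By Proposition~\ref{prop:DtNRduality}, the bulk eigenfunction $U_k^{(\Lambda)}=\mathcal{E}_\Lambda u_k^{(\Lambda)}$ is an eigenfunction of the Robin Laplacian $-\Delta^{\Rob,-\sigma}$ with eigenvalue $\Lambda$. This holds because $-\Delta U=\Lambda U$ in $\Omega$ and $\partial_n U-\sigma U=0$ on $\pa$, which is exactly the Robin condition with $\gamma=-\sigma$. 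Proposition~\ref{prop:dualityfurther} then gives $\Lambda=\lambda^{\Rob,-\sigma}_{k+m}(\Omega)$, where $m=\mathcal{N}_{-\Delta^\Dir_\Omega}(\Lambda)$.

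The second step is to apply Courant's theorem for Robin eigenfunctions with a real (possibly negative) parameter $\gamma=-\sigma$ on a Lipschitz domain. Since $U_k^{(\Lambda)}$ is an eigenfunction for the $(k+m)$-th Robin eigenvalue, it has at most $k+m$ nodal domains. If this Robin version is not available off the shelf, I would prove it by the standard argument. Suppose $U$ had nodal domains $\Omega_1,\dots,\Omega_{N}$ with $N>k+m$. Form the restrictions $W_j:=U\chi_{\Omega_j}$, which lie in $H^1(\Omega)$ by continuity up to the boundary (Theorem~\ref{thm:regularity}) and standard truncation arguments. On each $W_j$ the Robin quadratic form $\|\nabla W\|^2_{L^2(\Omega)}+\gamma\|W\|^2_{L^2(\pa)}$ equals $\Lambda\|W_j\|^2_{L^2(\Omega)}$. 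Integrating by parts on $\Omega_j$ is justified because $U=0$ on $\partial\Omega_j\cap\Omega$ and the Robin condition holds on $\partial\Omega_j\cap\pa$.

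With this identity in hand, the span of $W_1,\dots,W_{k+m}$ has Rayleigh quotient at most $\Lambda$, so by the minimax principle for the Robin Laplacian some combination $W$ is a Robin eigenfunction with eigenvalue $\Lambda$. This $W$ vanishes identically on the open set $\Omega_{N}$, contradicting unique continuation for solutions of $-\Delta W=\Lambda W$. Note that this contradiction uses the index bound $\lambda^{\Rob,-\sigma}_{k+m}=\Lambda$, not merely that $\Lambda$ is some eigenvalue. If $\Lambda$ is a multiple Robin eigenvalue, $k+m$ should be read as the largest index $j$ with $\lambda_j^{\Rob,-\sigma}=\Lambda$. The duality proposition then needs to be applied with the DtN index chosen consistently, which is fine because the statement is about any eigenfunction for $\sigma_k^{(\Lambda)}$.

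The case $\Lambda\in\Spec\left(-\Delta^\Dir_\Omega\right)$ I would handle by the same duality, counting only the Dirichlet eigenvalues strictly less than $\Lambda$. Alternatively, I would approximate by nearby $\Lambda$ and use continuity of the branches, though nodal counts are not stable under limits, so I prefer to check the duality index directly in this case. The main obstacle I expect is the technical justification that $W_j\in H^1(\Omega)$ with the correct boundary integral on a merely Lipschitz domain. The delicate points are where nodal sets meet $\pa$ and the boundary-term bookkeeping for negative $\gamma$, where the form is not positive. Continuity of $U$ up to $\overline\Omega$ is what I would lean on for both.
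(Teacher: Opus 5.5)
For $\Lambda\notin\Spec\left(-\Delta^\Dir_\Omega\right)$ your argument is the paper's. Proposition~\ref{prop:dualityfurther} gives $\Lambda=\lambda^{\Rob,-\sigma}_{k+m}$, and Courant's theorem for Robin eigenfunctions then gives at most $k+m$ nodal domains; your sketch of that theorem is the standard one. Your remark about reading $k+m$ as the \emph{largest} index in the Robin multiplicity block is unnecessary. Taken literally, it would weaken the bound whenever $\sigma_k^{(\Lambda)}$ is not the last member of its block. Your minimax argument works verbatim for any $n$ with $\lambda_n^{\Rob,-\sigma}\ge\Lambda$: if the inequality is strict the contradiction is immediate, and if it is an equality you use unique continuation. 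So you can take $n=k+m$ directly.

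The genuine problem is the case $\Lambda\in\Spec\left(-\Delta^\Dir_\Omega\right)$. Counting only the Dirichlet eigenvalues strictly below $\Lambda$ gives the wrong index shift, and the resulting bound is false. By \eqref{eq:NlambdaA}, $\mathcal{N}$ counts eigenvalues in $(-\infty,L]$, so the theorem's $m$ includes the multiplicity of $\Lambda$ as a Dirichlet eigenvalue, and this is forced.

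Take the unit disk and $\Lambda=j_{0,1}^2=\lambda_1^\Dir(\mathbb{D})$. The domain of $\DtN_\Lambda$ then consists of functions with zero mean. Formula \eqref{eq:sigmadisk} together with $J_0(j_{0,1})=0$ gives $\sigma_1^{(\Lambda)}=\sigma_2^{(\Lambda)}=j_{0,1}J_1'(j_{0,1})/J_1(j_{0,1})=-1$. The bulk eigenfunctions are $J_1(j_{0,1}r)\cos(\theta-\theta_0)$; the condition $\partial_nU=-U$ excludes adding multiples of $J_0(j_{0,1}r)$. They have two nodal domains since $j_{0,1}<j_{1,1}$. So your count $k+m'=1+0$ fails, while $k+m=2$ holds. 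On the Robin side, $\Lambda=\lambda_2^{\Rob,1}=\lambda_3^{\Rob,1}$, not $\lambda_1^{\Rob,1}$, which lies below $\lambda_1^\Dir$.

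The mechanism is Friedlander's example from Remark~\ref{rem:spaces}. Take $U^\Dir\in\mathcal{H}_{\Lambda,0}$ and $Z\in H^1(\Omega)$ with $\myscal{\partial_nU^\Dir,Z|_{\pa}}_{L^2(\pa)}\ne0$. Then $U^\Dir+tZ$ has Robin Rayleigh quotient $\Lambda+O(t)$ with a nonzero linear term, so it lies strictly below $\Lambda$ for small $t$ of suitable sign. Hence these directions must be counted.

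What your argument actually needs here is $\lambda^{\Rob,-\sigma}_{k+m}\ge\Lambda$ with $m=\mathcal{N}_{-\Delta^\Dir_\Omega}(\Lambda)$. One way to get it is to let $S_1\to-\infty$ and $S_2\to\sigma^-$ in \eqref{eq:NDtNint}, using strict monotonicity of Robin eigenvalues in $\gamma$. This yields $\#\{j:\lambda_j^{\Rob,-\sigma}<\Lambda\}=\#\{j:\sigma_j^{(\Lambda)}<\sigma\}+m\le k-1+m$, which is all your minimax argument uses.
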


Consider now the eigenfunctions $u_k^{(\Lambda)}$ of the
Dirichlet-to-Neumann operator $\DtN_\Lambda$ on $\partial \Omega$. Since these are the boundary traces of $U_k^{(\Lambda)}$, it follows from Theorem \ref{thm:regularity}  that $u_k^{(\Lambda)} \in C(\partial \Omega)$. In particular, one may 
ask whether a version of Courant's theorem holds for  $u_k^{(\Lambda)}$. As was noted in \cite{Girouard17},  
the proof of Courant's theorem cannot be generalised to the eigenfunctions $u_k^{(\Lambda)}$ because the
Dirichlet-to-Neumann operator $\DtN_\Lambda$ is nonlocal. Moreover, for $d \ge 3$,  the number of 
nodal domains of  $u_k^{(\Lambda)}$ cannot be controlled by the number of nodal domains of $U_k^{(\Lambda)}$.
In fact, as has been recently shown in \cite{Enciso24} there are no Courant-type bounds for $u_k^{(0)}$  in the 
Riemannian setting: given a compact manifold with boundary of dimension $d \ge 3$ and any numbers $K$ and $N$ one can construct a Riemannian metric such that the eigenfunctions  $u_k^{(0)}$, $k=1,\dots, K$, have all at least $N$ nodal domains. It would be interesting to adapt this construction to Euclidean domains, and to extend it to arbitrary $\Lambda$.  It is an open question whether  Courant's bound  holds  \emph{asymptotically} for $d \ge 3$, i.e. whether the number of nodal domains of $u_k^{(\Lambda)}$ is $O(k)$ as $k \to \infty$. If one ignores small oscillations and considers only ``deep''  nodal domains (i.e. nodal domains in which the absolute value of the eigenfunctions attains a certain fixed threshold), a bound of this type holds for smooth domains, see \cite{Buhovsky22} for details.

In two dimensions the situation is different.  First, it is easy to show that the number of endpoints of the nodal lines of 
the bulk eigenfunctions is bounded in terms of  the number of their nodal domains and the number of connected components of $\partial \Omega$ (cf. \cite[Lemma 3.4]{Alessandrini94} for  the case $\Lambda=0$ and $\Omega$ simply-connected).
Therefore, Theorem \ref{thm:Courant} would imply a Courant-type bound for  $u_k^{(\Lambda)}$ provided we can show that $u_k^{(\Lambda)}$ has no other zeros than the endpoints of the nodal lines of $U_k^{(\Lambda)}$. If $\Omega$
has $C^{1,1}$ boundary, the result follows from the last assertion of Theorem \ref{thm:regularity} and the Hopf--Oleinik lemma (cf. \cite[Chapter 3]{GilbargTrudinger}). 
Indeed, let $x \in \partial \Omega$ be such that  $u_k^{(\Lambda)}(x)=0$ and $x$ does not belong to the closure of the nodal set of $U_k^{(\Lambda)}$. Since  the boundary $\pa$ is $C^{1,1}$,   $\Omega$ satisfies the interior sphere property. Hence,  there exists a ball $B \subset \Omega$ such 
that $x \in \partial B$ and  $U_k^{(\Lambda)}$ does not change sign in $B$; we may assume without loss (multiplying it by $-1$ if needed) that it is negative. Then by Hopf--Oleinik lemma $\partial_n u_k^{(\Lambda)}(x)>0$, but at the same time 
since $U_k^{(\Lambda)} \in C^1(\overline{\Omega})$ we have $\partial_n u_k^{(\Lambda)}(x)=\sigma_k u_k^{(\Lambda)}(x)=0$, which is a contradiction.

However, if we only assume that $\Omega$ is Lipschitz, one cannot apply the Hopf--Oleinik lemma in a similar way.
 Therefore, the argument above does not work, and it is an interesting question to find an alternative approach.

\begin{remark}
We note that  the Dirichlet-to-Neumann map arises in the study of the nodal count for the eigenfunctions of the Laplace operator. We refer to \cite{Cox17}, \cite{Berkolaiko19}, \cite{Berkolaiko22} for details.
\end{remark}

\subsection{Positivity of the first eigenfunction and the heat semigroup}\label{sec:positivity}

It follows from Theorem \ref{thm:Courant} that the first bulk eigenfunction $U_1^{(\Lambda)}$ of the Dirichlet-to-Neumann operator $\DtN_\Lambda$ does not change sign in $\Omega$ for $\Lambda < \lambda_1^\Dir(\Omega)$. In fact, a stronger result holds, see  \cite{ArterElstGl2020}.
\begin{theorem}
Let $\Omega$ be a Lipschitz domain and  $\Lambda < \lambda_1^\Dir(\Omega)$. 
Then $U_1^{(\Lambda)} \in C(\overline{\Omega})$ and $U_1^{(\Lambda)}(x)>0$ for all $x \in \overline{\Omega}$. 
\end{theorem}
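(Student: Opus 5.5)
Continuity of $U_1^{(\Lambda)}$ on $\overline{\Omega}$ is already given by Theorem \ref{thm:regularity}, so only strict positivity on the closed domain remains. I would argue in three steps: the first eigenfunction can be taken nonnegative, it is strictly positive inside $\Omega$, and it does not vanish anywhere on $\pa$.

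\textbf{Step 1 (sign).} Since $\Lambda<\lambda_1^\Dir(\Omega)$, the variational characterisation \eqref{eq:muk_minimax1} over all of $H^1(\Omega)$ is available. For $W\in H^1(\Omega)$ we have $|W|\in H^1(\Omega)$, $|\nabla|W||=|\nabla W|$ a.e., and $\||W|\|_{L^2(\pa)}=\|W\|_{L^2(\pa)}$. So $|U_1^{(\Lambda)}|$ has the same Rayleigh quotient as $U_1^{(\Lambda)}$ and is also a minimiser, hence an eigenfunction: it is $\Lambda$-harmonic and satisfies $\partial_n|U_1|=\sigma_1|U_1|$ in the weak sense. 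The same argument applied to $U_1^\pm=\frac12(|U_1|\pm U_1)$ shows that $\sigma_1^{(\Lambda)}$ is simple. Indeed, if $U_1$ changed sign, both $U_1^\pm$ would be $\Lambda$-harmonic and would vanish on open subsets of $\Omega$, which contradicts unique continuation (real-analyticity). We may therefore assume $U_1:=U_1^{(\Lambda)}\ge0$, $U_1\not\equiv0$.

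\textbf{Step 2 (interior).} The function $U_1$ solves $-\Delta U_1+cU_1=0$ with the constant $c=-\Lambda$, and $U_1\ge0$. If $\Lambda\le0$, then $c\ge 0$ and the strong maximum principle (Harnack inequality) gives $U_1>0$ in the connected set $\Omega$. If $\Lambda>0$, I would write $-\Delta U_1=\Lambda U_1\ge0$, so $U_1$ is a nonnegative superharmonic function, and the strong minimum principle again yields $U_1>0$ in $\Omega$. (Alternatively, apply Harnack's inequality for $-\Delta-\Lambda$, which holds for either sign of $\Lambda$.)

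\textbf{Step 3 (boundary) — the main obstacle.} Because $\pa$ is only Lipschitz, the Hopf lemma is not available, as discussed in \S\ref{sec:courant}. I would first try the Robin--Dirichlet-to-Neumann duality (Propositions \ref{prop:DtNRduality} and \ref{prop:dualityfurther}). With $m=\mathcal{N}_{-\Delta^\Dir_\Omega}(\Lambda)=0$, the function $U_1$ is the principal eigenfunction of the Robin Laplacian $-\Delta^{\Rob,-\sigma_1}$, so it suffices to prove positivity up to the boundary for principal Robin eigenfunctions on Lipschitz domains. For that I would use the heat semigroup generated by the Robin form $a(W)=\|\nabla W\|^2-\Lambda\|W\|^2-\sigma_1\|W\|^2_{L^2(\pa)}$ on $L^2(\Omega)$. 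By the Beurling--Deny criteria (the form is invariant under $W\mapsto|W|$ and is local), this semigroup is positive and irreducible. Its kernel satisfies Gaussian-type lower bounds up to the boundary, since Lipschitz domains are extension domains and support the boundary Harnack and Moser theory for Robin problems, as in Daners' work. I would then write $U_1=e^{t(\lambda_1-\cdots)}T_tU_1$ and use a kernel lower bound $k_t(x,y)\ge c>0$ for $x\in\overline\Omega$ and $y$ in a compact subset of $\Omega$ where $U_1>0$. This gives $\inf_{\overline\Omega}U_1>0$.

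As a fallback, I would run a direct Moser iteration for the weak supersolution $U_1$ near boundary points (flatten the boundary with a bi-Lipschitz map). This gives a weak Harnack inequality $\inf_{B_r\cap\Omega}U_1\ge c\,r^{-d}\int_{B_{2r}\cap\Omega}U_1$. The boundary term $\sigma_1\int_{\pa}U_1\varphi$ is absorbed using the trace inequality $\|\varphi\|^2_{L^2(\pa)}\le\epsilon\|\nabla\varphi\|^2+C_\epsilon\|\varphi\|^2$. Combined with the interior positivity from Step 2 and continuity from Theorem \ref{thm:regularity}, this gives $U_1(x)>0$ for every $x\in\pa$.
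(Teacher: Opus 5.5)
Your proposal is correct and follows essentially the route the paper indicates. Interior positivity comes from the sign argument for the first eigenfunction (your $|W|$ trick is the $k=1$ instance of the variational argument behind Courant's theorem) together with the strong maximum principle, and boundary positivity comes from Robin--Dirichlet-to-Neumann duality combined with positivity of the Robin heat semigroup, as in \cite{ArterElstGl2020}.

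Two small remarks. The kernel lower bound up to $\pa$ that you assert is immediate for $\Lambda\le 0$, since then the Robin parameter $-\sigma_1^{(\Lambda)}\le 0$ and the Robin semigroup dominates the Neumann one. For $0<\Lambda<\lambda_1^\Dir$ it is precisely the delicate point, and your boundary weak Harnack fallback (or the semigroup argument of \cite{ArterElstGl2020}) is what actually covers it. Also, Step 1 as written only shows that first eigenfunctions have constant sign, not that $\sigma_1^{(\Lambda)}$ is simple; simplicity needs one more step, e.g.\ take a combination of two eigenfunctions vanishing at an interior point and apply Step 2.
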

This result can be viewed as  two  separate statements.  The first one  is strict positivity of the bulk eigenfunction $U_1^{(\Lambda)}$ in $\Omega$, and the second one is the strict positivity of the boundary eigenfunction
$u_1^{(\Lambda)}$ on $\pa$. The first statement can be deduced from Courant's theorem and  a version of the maximum principle \cite[Theorem 3.5]{GilbargTrudinger} which implies that a solution of the equation $-\Delta U = \Lambda U$ must change sign in any neighbourhood of its zero. The second statement is more delicate. 
As was mentioned in the previous section, for $C^{1,1}$ boundaries it 
follows from the Hopf--Oleinik lemma, however in the general Lipschitz case one has to use other techniques. The result is a 
corollary of the positivity  properties of the  heat semigroup associated with the Robin Laplacian combined with the Robin--Dirichlet-to-Neumann duality. Related results concerning the positivity properties 
of the heat semigroup $\exp\left(-t \DtN_\Lambda\right)$ associated with the Dirichlet-to-Neumann map for
$\Lambda < \lambda_1^\Dir(\Omega)$ can be also found in \cite{Arendt07},  \cite{Arendt12}, \cite{terElst19}.
 
As a consequence of the strict positivity of $u_1^{(\Lambda)}$ for  $\Lambda <
\lambda_1^\Dir$,   we deduce using orthogonality 
that all other boundary Dirichlet-to-Neumann eigenfunctions must change sign on $\pa$, and, moreover, 
the first eigenvalue $\sigma_1^{(\Lambda)}$ is simple.
Note that the latter result requires only the connectedness of the
domain $\Omega$, not of the boundary $\pa$.

Let us note that the positivity of the heat semigroup and the positivity of the first
Dirichlet-to-Neumann (bulk) eigenfunction for $\Lambda >
\lambda_1^\Dir$ has been studied in \cite{Daners14}.  Using two
elementary examples (an interval and a disk), Daners showed that the
semigroup can be non-positive depending on the values of $\Lambda$,
and that there is no clear positivity criteria (both for the semigroup
and for the first eigenfunction) in the range $\Lambda >
\lambda_1^\Dir$.

\begin{remark}\label{rem:Sato}
It was realised in \cite[Theorems 9.1 and 9.1$'$]{Sato65} that the semigroup of the Dirichlet-to-Neumann
operator $\DtN_\Lambda$ with $\Lambda \leq 0$ defines a class of
Markov processes on the boundary $\pa$ that can be seen as the trace
on $\pa$ of a diffusion process in $\Omega$, up to an appropriate
stochastic time change $t\mapsto \ell_t$.  The positivity of this
semigroup plays the crucial role here. Indeed, if $p_0(x_0)$ is a
prescribed probability density of starting points $x_0 \in \pa$ of
the Markov process, the semigroup $\exp(-\ell\DtN_\Lambda)$ determines
the (possibly non-normalised) probability density $p(x)$ of the
position $x$ of that process on the boundary at the \emph{boundary local time} (see \cite{Ito}, \cite{Freidlin}),  $\ell$: $p = \exp(-\ell \DtN_\Lambda) p_0$. The positivity of
the semigroup ensures that, for any positive function $p_0$ and any
$\ell > 0$, the resulting function $p$ is also positive, as it should
be for its probabilistic interpretation.
When $\Lambda < 0$, the term $-\Lambda U$ in the Helmholtz
equation models the interaction of a reactive medium with a diffusing
particle, which, as a result, may disappear in the bulk with a reaction rate
proportional to $|\Lambda|$. We also note that the semigroup $\exp(-\ell \DtN_0)$ 
was used in the analysis of
the escape problem for the Cauchy process
\cite{Banuelos04}.
More recently, the heat kernel representing $\exp(-\ell\DtN_\Lambda)$ ,
\begin{equation}\label{eq:Sigma_def}
\Sigma^{(\Lambda)}(x,\ell; x_0) = \sum\limits_{k=1}^\infty u_k^{(\Lambda)}(x) \, u_k^{(\Lambda)}(x_0) \, \er^{-\ell \sigma_k^{(\Lambda)}},
\qquad x,x_0\in \pa,  \ell \geq 0,
\end{equation}
was used to build the encounter-based approach to diffusion-controlled
reactions \cite{Grebenkov20}, see \S \ref{sec:reactions}.  
\end{remark}

\subsection{Localisation near the boundary}\label{sec:localisation}

The bulk eigenfunctions $U_k^{(\Lambda)}$ of the Dirichlet-Neumann map with $\Lambda<\lambda_1^\Dir(\Omega)$ exhibit remarkable localisation properties near the boundary $\pa$ as $k\to+\infty$.

Consider first the case $\Lambda=0$ and let $\Omega \subset \R^d$ be a bounded domain with smooth
boundary.  It was shown in \cite{Hislop01} that for any compact $K\subset \Omega$,
$\|U_k^{(0)}\|_{H^1(K)} = O(k^{-\infty})$.  In other
words, the magnitude  of an eigenfunction $U_k^{(0)}$ on any interior 
subset of the domain rapidly vanishes as $k$ increases.
It was also  conjectured that the decay of $U_k^{(0)}$ is of order $\er^{-k \operatorname{dist}(K,\pa)}$ in the
case of a real-analytic boundary.

This conjecture was proved in \cite{Polterovich19} for planar
bounded domains with real-analytic boundary.  In
fact, an immediate consequence of their analysis was the existence of
constants $C$ and $\tau$ which depend only on the geometry of $\Omega$ such that
\begin{equation}  \label{eq:Vk_exp}
\left|U_k^{(0)}(x)\right| \leq C \, \er^{-\tau \sigma_k^{(0)} \operatorname{dist}(x, \pa)},  \qquad k\in\mathbb{N}.
\end{equation}
For instance, in the case of a unit disk $\mathbb{D}$, the bound \eqref{eq:Vk_exp} holds with $\tau = 1$ and
$C = \frac{1}{\sqrt{2\pi}}$.  Indeed, one has, for a bulk eigenfunction written in polar coordinates as $U_{(k)}^{(0)}(r,\theta)=\frac{1}{\sqrt{2\pi}} r^k\er^{\ir k \theta}$, and corresponding to an eigenvalue $\sigma^{(0)}_{(k)}=k$, 
\[
\left|U_{(k)}^{(0)}(r,\theta)\right| \le  \frac{1}{\sqrt{2\pi}} r^k =  \frac{1}{\sqrt{2\pi}} \er^{k\ln(1-\delta)}\le  \frac{1}{\sqrt{2\pi}}\er^{-\sigma^{(0)}_{(k)}\delta} ,
\]
where $\delta = 1-r = \operatorname{dist}\left((r,\theta),\partial\mathbb{D}\right)$ is the
distance to the boundary.
For domains with real-analytic boundary in $\mathbb{R}^d$, $d\ge 3$,  estimate \eqref{eq:Vk_exp} has been extended and made more precise in
\cite{Galkowski19}.

This analysis was further generalised  to the case
$\Lambda \ne 0$. 

\begin{theorem}[{\cite{Helffer22}, see also \cite{Daude21}}] 
Let  $\Omega\subset\R^d$ be a bounded domain with a
smooth connected boundary $\pa$, 
and let $U^{(\Lambda)}$ be a bulk eigenfunction of $\DtN_\Lambda$ with a unit $L^2(\pa)$ norm, corresponding to an eigenvalue $\sigma^{(\Lambda)}$. 
\begin{enumerate}
\item[\normalfont{(i)}] For any $p\in {\mathbb N}$ and any $\Lambda_0 <
\lambda_1^\Dir(\Omega)$, there exist constants $C=C_{p,\Lambda_0}$ and $S=S_{p, \Lambda_0}$ such that
for any $\Lambda \leq \Lambda_0$ and for any pair $\sigma^{(\Lambda)}$, $U^{(\Lambda)}$ with $\sigma^{(\Lambda)} >S$,
\begin{equation}\label{eq:HK1}
\left|U^{(\Lambda)}(x)\right| \leq \frac{C}{\left(\sigma^{(\Lambda)} \operatorname{dist}(x,\pa)\right)^{2p}}  \qquad \text{for all }x\in\Omega.
\end{equation} 
In other words, any such bulk eigenfunction
$U_k^{(\Lambda)}$ corresponding to a  sufficiently high eigenvalue $\sigma_k^{(\Lambda)}$ rapidly decays away
from the boundary.  
\item[\normalfont{(ii)}] If additionally the boundary $\pa$ is real-analytic, one can further improve
\eqref{eq:HK1} by switching from an arbitrary power decay to the
exponential decay.  Namely, for any $\eta > 0$ and any $\Lambda_0 < \lambda_1^\Dir$, there exist constants $C=C_{\eta,\Lambda_0}$, $\epsilon=\epsilon_{\eta,\Lambda_0}$ and $S=S_{\eta, \Lambda_0}$ such that for any pair $\sigma^{(\Lambda)}$, $U^{(\Lambda)}$ with $\sigma^{(\Lambda)} > S$, 
\[
\left|U^{(\Lambda)}(x)\right| \leq C \left(\sigma^{(\Lambda)}\right)^{\frac{d}{2} - \frac14} \exp\left(- (1-\eta)\sigma^{(\Lambda)} \min\{\epsilon,  \operatorname{dist}(x,\pa)\}\right) \quad \text{for all }x\in\Omega.
\]
\end{enumerate}
\end{theorem}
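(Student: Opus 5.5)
The plan is to reduce both parts to localisation estimates for Steklov-type problems ($\Lambda=0$), using a pseudodifferential description of $\DtN_\Lambda$ near the boundary, and to keep all constants uniform in $\Lambda\le\Lambda_0$.

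\textbf{Part (i): reduction to a boundary-layer estimate.} Fix $\Lambda_0<\lambda_1^\Dir(\Omega)$. Then, uniformly in $\Lambda\le\Lambda_0$, the operator $-\Delta^\Dir-\Lambda$ is invertible with resolvent bounded by $(\lambda_1^\Dir-\Lambda_0)^{-1}$, and the maximum principle-type estimates for $-\Delta-\Lambda$ hold. We work with $\Lambda$-harmonic extensions: $U^{(\Lambda)}=\mathcal{E}_\Lambda u$ with $\DtN_\Lambda u=\sigma u$ and $\|u\|_{L^2(\pa)}=1$.

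\textbf{Power decay by iterating Sobolev norms.} The key identity is that, for each $p$, $\sigma^{p}u=\DtN_\Lambda^{p}u$. Hence $\|u\|_{H^{s}(\pa)}\lesssim \sigma^{s}$, with constants uniform in $\Lambda\le\Lambda_0$ for bounded $\Lambda$, and for $\Lambda\to-\infty$ controlled via Theorem \ref{thm:Hor}-type comparison $\DtN_\Lambda\approx\sqrt{-\Lambda-\Delta_{\pa}}$.

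Next, we use a negative-order interior estimate. For $K$ at distance $\delta$ from $\pa$, the map $u\mapsto \mathcal{E}_\Lambda u|_K$ is bounded from $H^{-2p}(\pa)$ to $L^\infty(K)$ with norm $\lesssim \delta^{-2p-c}$. This is obtained by writing $\mathcal{E}_\Lambda u(x)=\int_{\pa}\partial_{n_y}G_\Lambda(x,y)u(y)\,dy$ with the Dirichlet Green function, and integrating by parts $p$ times with $(1-\Delta_{\pa})$ on the kernel. The kernel is smooth in $y$ for $x\in K$, with derivatives of order $2p$ bounded by $\delta^{-2p-(d-1)}$, uniformly for $\Lambda\le\Lambda_0$; for very negative $\Lambda$ the kernel even decays exponentially.

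Then we combine the two ingredients. Write $u=\sigma^{-2p}(\DtN_\Lambda)^{2p}u$, but instead use the $H^{-2p}$ bound
\[
\|u\|_{H^{-2p}(\pa)}\lesssim \sigma^{-2p},
\]
which follows from ellipticity of $\DtN_\Lambda$ of order one: $\|u\|_{H^{-2p}}\lesssim\|(\DtN_\Lambda+C)^{-2p}\cdots\|$, i.e.\ $(\DtN_\Lambda+C)^{-1}$ maps $H^{s}\to H^{s+1}$ uniformly. This gives $|U(x)|\lesssim (\sigma\delta)^{-2p}$ after adjusting $p$ to absorb the extra power of $\delta$, and $S$ is chosen so that $\sigma+C$ is comparable to $\sigma$.

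\textbf{Main obstacle: uniformity as $\Lambda\to-\infty$.} The hardest step is keeping the constants uniform as $\Lambda\to-\infty$. For this I would treat $h=(-\Lambda+1)^{-1/2}$ as a semiclassical parameter and use the semiclassical symbol $\sqrt{|\xi|^2-\Lambda}$ of $\DtN_\Lambda$.

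\textbf{Part (ii): exponential decay.} For real-analytic boundary I would follow \cite{Polterovich19} and \cite{Galkowski19}. Use analytic microlocal (FBI-transform) estimates: $\DtN_\Lambda$ is then an analytic pseudodifferential operator, so the eigenfunction $u$ extends holomorphically to a complex neighbourhood of width $\epsilon$. The Poisson-type kernel then yields decay $\exp(-(1-\eta)\sigma\,\mathrm{dist})$ up to distance $\epsilon$. The polynomial prefactor $\sigma^{d/2-1/4}$ comes from the $L^2\to L^\infty$ loss of the FBI transform. Establishing the analytic parametrix uniformly in $\Lambda$ is the main technical step.
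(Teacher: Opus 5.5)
The survey states this theorem without proof and defers to \cite{Helffer22} and \cite{Daude21}, so your plan has to stand on its own. As written it has genuine gaps.

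The first gap is the step ``adjusting $p$ to absorb the extra power of $\delta$'' in (i). Write $\delta=\operatorname{dist}(x,\pa)$. The Poisson kernel $P_\Lambda(x,\cdot)$ is a bump of height $\approx\delta^{1-d}$ and width $\approx\delta$, so its $H^{2p}(\pa)$-norm is of order $\delta^{-2p-(d-1)/2}$. Pairing this with $\|u\|_{H^{-2p}(\pa)}\le C\sigma^{-2p}$ therefore gives only $|U(x)|\le C_p(\sigma\delta)^{-2p}\delta^{-(d-1)/2}$.

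In the layer $\sigma\delta\approx1$ the leftover factor is $\approx\sigma^{(d-1)/2}$, and no choice of $p$ removes it. Larger $p$ only gives $O(\sigma^{-\infty})$ on $\{\delta\ge\sigma^{-1+\varepsilon}\}$, or on a fixed compact set as in \cite{Hislop01}. That is not the uniform-in-$x$ statement.

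This is not a bookkeeping issue: for $d\ge3$ the displayed bound with $C$ independent of $\sigma$ fails in that layer. On $\mathbb{B}^3$ with $\Lambda=0$, take $U=r^mY_m^0$, with the zonal harmonic normalised in $L^2(\mathbb{S}^2)$. Then $\sigma=m$. At the point of the axis at distance $1/m$ from the sphere, $|U|=(1-1/m)^m\sqrt{(2m+1)/(4\pi)}\to\infty$, whereas the right-hand side equals $C$. So you must either keep a power of $\sigma$ in front, as in (ii), or restrict the region.

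A $\sigma$-free constant would need a different pairing. You would bound $(\DtN_\Lambda+C)^{2p}P_\Lambda(x,\cdot)$ in $L^1(\pa)$ by a constant times $\delta^{-2p}$ and pair it against $\|u\|_{L^\infty(\pa)}=O(\sigma^{(d-2)/2})$ from \eqref{linftyeig}. That only closes when $d=2$.

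The second gap is uniformity in $\Lambda\le\Lambda_0$. You flag it but do not resolve it, and in fact your mechanism fails there. The bound $\|u\|_{H^{-2p}(\pa)}\le C\sigma^{-2p}$ uses boundedness of $(\DtN_\Lambda+C)^{2p}\colon L^2\to H^{-2p}$, whose norm grows like $|\Lambda|^p$. The bound itself is false uniformly: on the unit disk $u_1^{(\Lambda)}=(2\pi)^{-1/2}$ for every $\Lambda$, while $\sigma_1^{(\Lambda)}=\sqrt{-\Lambda}\,I_1(\sqrt{-\Lambda})/I_0(\sqrt{-\Lambda})$ exceeds any $S$ as $\Lambda\to-\infty$. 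In that regime the decay of $U_1^{(\Lambda)}$ comes from the bulk operator $-\Delta-\Lambda$, not from oscillation of the trace. So a single mechanism has to cover both situations.

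Your proposed scaling $h=(1-\Lambda)^{-1/2}$ misses eigenvalues with $\sigma\gg\sqrt{-\Lambda}$. The natural parameter is $h=1/\sigma$:
\begin{itemize}
\item By Theorem~\ref{thm:Hor}, $-\Lambda\le(\sigma+C)^2$, so $E=h^2\Lambda$ stays in a compact set.
\item $U$ then solves the semiclassical Robin problem $(-h^2\Delta-E)U=0$, $h\,\partial_nU=U$, given by Proposition~\ref{prop:DtNRduality}. This is the semiclassical Robin setting of \cite{Helffer22}.
\item On the characteristic set the tangential frequency satisfies $\sqrt{|\xi|^2-\Lambda}\approx\sigma$, so heuristically every regime decays at rate $\sigma$.
\end{itemize}

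Your sketch of (ii) looks at the wrong end of frequency space. Holomorphic extension of $u$ to a strip of width $\epsilon$ only controls \emph{high} frequencies. Decay of $U$ at rate $(1-\eta)\sigma$ needs the components of $u$ with $|\xi|\le(1-\eta)\sigma$ to be exponentially small with a sharp rate. That is analytic microlocal ellipticity of $\sigma^{-1}\DtN_\Lambda-1$ away from $\{|\xi|=1\}$, as in \cite{Galkowski19}. That estimate, the prefactor $\sigma^{d/2-1/4}$, and uniformity in $\Lambda$ are all asserted in your plan rather than derived.
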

Helffer and Kachmar questioned whether the assumption of real-analytic
boundary was crucial for this statement; in particular, they posed the
question whether $C^\infty$ smoothness would be enough, which remains an open problem. A systematic
numerical study of the localisation for both smooth and polygonal
domains was provided in \cite{Chaigneau24}.

\section{Layer potentials and Green's functions}\label{sec:representation}

\subsection{The Dirichlet-to-Neumann map and  layer potentials}\label{subsec:laypot}

The Dirichlet-to-Neumann operator is closely related to the single and double layer potentials.
Indeed, a general solution $U(x)$ of the Helmholtz equation $-\Delta U - \Lambda U=0$ in $\Omega$ can be written as
\begin{equation}\label{eq:u_potential} 
U(x) = \int\limits_{\pa} \left( (\partial_{n_{y}} U)(y) \Phi_\Lambda(x-y) - U(y) \partial_{n_{y}} \Phi_\Lambda(x - y)\right) \dr y,
\qquad x\in \Omega,
\end{equation}
where $\Phi_\Lambda(x)$ is the fundamental solution of the Helmholtz
equation in $\R^d$, 
\[
\Phi_\Lambda(x) = 
\begin{cases}
\frac{K_{\frac{d}{2}-1}(|x|\sqrt{-\Lambda})}{(2\pi)^{\frac{d}{2}}} 
\left(\frac{|x|}{\sqrt{-\Lambda}}\right)^{1-\frac{d}{2}}\qquad&\text{if }\Lambda < 0,\\[3ex]
- \frac{\ln|x|}{2\pi}\qquad&\text{if }\Lambda = 0\text{ and }d=2,\\[2ex]
\frac{\Gamma(\frac{d}{2}-1)}{4\pi^{\frac{d}{2}} |x|^{d-2}}\qquad&\text{if }\Lambda = 0\text{ and }d\ge 3,\\[3ex]
\ir\frac{H_{\frac{d}{2}-1}^{(1)}(|x|\sqrt{\Lambda})}{4 (2\pi)^{\frac{d}{2}-1}} 
\left(\frac{|x|}{\sqrt{\Lambda}}\right)^{1-\frac{d}{2}}\qquad&\text{if }\Lambda > 0.
\end{cases}
\]
Here $H_n^{(1)}(z) = J_n(z) + \ir Y_n(z)$ is the Hankel function of the
first kind, $J_n(z)$ and $Y_n(z)$ are the Bessel functions of the
first and second kind, and $K_n(z)$ is the modified Bessel function of
the second kind.  For $\Lambda \leq 0$, the fundamental solution
decays at infinity (except for the planar case with $\Lambda = 0$, for
which its absolute value logarithmically increases to infinity).  In turn, for
$\Lambda > 0$, the fundamental solution satisfies the Sommerfeld
radiation condition at infinity \cite[p. 282]{McLean}, and the imaginary part of $\Phi_\Lambda$ is a solution of the homogeneous Helmholtz equation.

The first and second terms in \eqref{eq:u_potential} are referred
to as the single and double layer potentials, respectively \cite{steinbach2007numerical}.
The single layer potential  $\mathrm{V}: H^{-1/2}(\partial\Omega) \to H^{1/2}(\partial\Omega)$,  
is given by
\[
(\mathrm{V} \eta)(x) := \int_{\partial\Omega} \Phi_\Lambda(x-y) \eta(y) \, \dr y.
\]
The double layer potential $\mathrm{K}: H^{1/2}(\partial\Omega) \to H^{1/2}(\partial\Omega)$,  
is given by
\[
(\mathrm{K} \eta) (x):= \int_{\partial\Omega} (\partial_{n_{y}} \Phi_\Lambda(x-y)) \eta(y) \, \dr y.
\]
Using the limiting behaviour of the potentials, the standard representation \eqref{eq:u_potential} of 
a $\Lambda$-harmonic function $U(x)$ in terms of these potentials becomes on the boundary 
\[
U(x) = (\mathrm{V}(\partial_n U))(x) + \frac{U(x)}{2} - (\mathrm{K} U)(x),\qquad x\in \partial\Omega.
\]
If the single layer operator is invertible (which is always the case for $d \ge 3$ and, if needed, can be achieved by dilation for $d=2$), this equation yields a representation of the Dirichlet-to-Neumann 
operator
\begin{equation} \label{eq:DtN_VK}
\DtN_\Lambda = \mathrm{V}^{-1} \left(  \frac{1}{2} \mathrm{I} + \mathrm{K} \right): H^{1/2}(\partial\Omega) \to H^{-1/2}(\partial\Omega),
\end{equation}
where $\mathrm{I}$ denotes the identity operator.  

\begin{remark}\label{rem:fundsol}
Note that  $\Phi_\Lambda(x)$  is not a unique choice of a fundamental solution, which is defined up to an addition of a solution of the homogeneous Helmholtz equation. While the layer potential operators $\mathrm{K}$ and $\mathrm{V}$
depend on a particular choice of the fundamental solution, the Dirichlet-to-Neumann map $\DtN_\Lambda$ is independent of this choice. 
In particular, this explains that while for $\Lambda>0$ the function $\Phi_\Lambda$ is complex-valued (with its imaginary part being  a homogeneous solution), formula \eqref{eq:DtN_VK} defines an operator mapping real-valued functions to real-valued functions. 
\end{remark}

In \S\ref{sec:BEM}, we discuss how relation  \eqref{eq:DtN_VK} 
can be used for an efficient numerical computation of the eigenvalues
and eigenfunctions of $\DtN_\Lambda$ via boundary-element methods.

\subsection{The Dirichlet-to-Neumann map and Green's functions}\label{subsec:greens}

In various applications, the eigenfunctions of the Dirichlet-to-Neumann operator play an
important role as a basis for representing and approximating
$\Lambda$-harmonic functions and related quantities such as Green's
functions of the Laplace and Helmholtz equations (see
\cite{Auchmuty04}, \cite{Auchmuty06}, \cite{Auchmuty09}, \cite{Auchmuty13}, \cite{Auchmuty14}, \cite{Auchmuty15}, \cite{Auchmuty18}, \cite{Grebenkov19}
and references therein).  Such representations often exhibit a fast
convergence and are suitable for studying diffusion-controlled
reactions
\cite{Grebenkov20}, \cite{Grebenkov20c}, \cite{Grebenkov23a}.

Let $G^{\Dir}_\Lambda(x,x_0)$ denote the Dirichlet Green's
function for the Helmholtz equation (i.e., the kernel of the resolvent
$(-\Delta^{\Dir}_\Omega -\Lambda)^{-1}$) satisfying for any fixed
$x_0 \in \Omega$ and $\Lambda \notin \Spec\left(-\Delta^\Dir_\Omega\right)$
\[
\begin{cases}
-(\Delta + \Lambda) G^{\Dir}_\Lambda(x,x_0) = \delta(x-x_0), \qquad &x\in\Omega,  \\
G^{\Dir}_\Lambda(x,x_0) = 0,  \qquad &x\in\pa.
\end{cases}
\]
Similarly, one can introduce the Robin Green's function for any fixed
$x_0 \in \Omega$ and $\Lambda \notin \Spec\left(-\Delta^{\Rob,\gamma}_\Omega\right)$,
\begin{equation}\label{eq:tildeGq}
\begin{cases} 
-(\Delta + \Lambda) G^{\Rob,\gamma}_\Lambda(x,x_0)  = \delta(x-x_0), \qquad &x\in\Omega,\\
(\partial_n + \gamma) G^{\Rob,\gamma}_\Lambda(x,x_0)  = 0,  \qquad &x\in\pa, 
\end{cases}
\end{equation}
and Neumann Green's function for any $\Lambda \notin\Spec\left(-\Delta^{\Neu}_\Omega\right)$:
\[
\begin{cases} 
-(\Delta + \Lambda) G^{\Neu}_\Lambda(x,x_0)  = \delta(x-x_0), \qquad &x\in\Omega,\\
\partial_n G^{\Neu}_\Lambda(x,x_0)  = 0  \qquad &x\in\pa .
\end{cases}
\]

The Dirichlet Green's function $G^{\Dir}_\Lambda(x,x_0)$ for $\Lambda < 0$ can
be represented via the single layer potential as \cite[Theorem 5.2]{Auchmuty18},
\begin{equation}\label{eq:Ginf_fundam}
G^{\Dir}_\Lambda(x,x_0) = \Phi_\Lambda(x-x_0) - 
\sum\limits_{k=1}^\infty U_k^{(\Lambda)}(x) \int\limits_{\pa} \Phi_\Lambda(y - x_0) U_k^{(\Lambda)}(y)\, \dr y,
\end{equation}
where the convergence of the series is in the norm of $H^1(\Omega)$.  
In the same vein, the representation of the Robin Green's function
with $\Lambda < 0$ and $\gamma \geq 0$ involves both single and double layer potentials
\cite[Theorem 6.2]{Auchmuty18},
\begin{equation}\label{eq:Gq_fundam}
G^{\Rob,\gamma}_\Lambda(x,x_0) = \Phi_\Lambda(x-x_0) 
-\sum\limits_{k=1}^\infty \frac{U_k^{(\Lambda)}(x)}{\gamma + \sigma_k^{(\Lambda)}} 
\int\limits_{\pa} \left(\partial_{n_y} \Phi_\Lambda(y - x_0)  +  \gamma\, \Phi_\Lambda(y - x_0)\right) U_k^{(\Lambda)}(y)\, \dr y,
\end{equation}
where the convergence is again in the norm of $H^1(\Omega)$ (the
Neumann case is obtained by setting $\gamma = 0$).  It is also
applicable in the limit $\Lambda \to 0$ (for the Neumann case, one has
to remove the term with $k = 1$).

Despite the explicit form of the fundamental solution
$\Phi_\Lambda(x)$, the computation of the integrals in
\eqref{eq:Gq_fundam} is in general a difficult task.  Subtracting 
\eqref{eq:Gq_fundam} and \eqref{eq:Ginf_fundam} and re-arranging terms in 
the right-hand side, we get for $\Lambda < 0$ and $\gamma \ge 0$,
\[
\begin{split}
G^{\Rob,\gamma}_\Lambda(x,x_0) &- G^{\Dir}_\Lambda(x,x_0) \\  
&= 
\sum\limits_{k=1}^\infty \frac{U_k^{(\Lambda)}(x)}{\gamma + \sigma_k^{(\Lambda)}} 
\int\limits_{\pa} \left(\Phi_\Lambda(y - x_0) \partial_{n_y} U_k^{(\Lambda)}(y) -  U_k^{(\Lambda)}(y) \partial_{n_y} 
\Phi_\Lambda(y - x_0) \right) \dr y .
\end{split}
\]
According to \eqref{eq:u_potential}, the integral is just a
representation of the $\Lambda$-harmonic function
$U_k^{(\Lambda)}(x_0)$ in $\Omega$.  We therefore retrieve two
representations of the Robin Green's function proposed in
\cite{Grebenkov20}, for $\Lambda < 0$ and $\gamma \ge 0$:
\begin{equation}\label{eq:Gq_Ginf} 
\begin{split} 
G^{\Rob,\gamma}_\Lambda(x,x_0) & = G^{\Dir}_\Lambda(x,x_0) + 
\sum\limits_{k=1}^\infty \frac{U_k^{(\Lambda)}(x)  \, U_k^{(\Lambda)}(x_0)}{\gamma + \sigma_k^{(\Lambda)}}  \\  
&= G^{\Neu}_\Lambda(x,x_0) - 
\gamma \sum\limits_{k=1}^\infty \frac{U_k^{(\Lambda)}(x)  \, U_k^{(\Lambda)}(x_0)}{\sigma_k^{(\Lambda)}(\gamma + \sigma_k^{(\Lambda)})}  \,,
\end{split}
\end{equation}
where the second representation follows immediately from the first
one.  These representations are in terms of bulk eigenfunctions 
so that there is no
need for computing integrals with single and double layer potentials.
In turn, the explicit fundamental solution $\Phi_\Lambda(x)$ is
replaced by (usually unknown) Dirichlet or Neumann Green's function.
These two representations allow one to analyse the effect of the Robin
parameter $\gamma$ on the Robin Green's function and offer useful
probabilistic interpretations \cite{Grebenkov20}.  Setting $\gamma =
0$ and restricting $x_0 \in \pa$ and $x\in \pa$ cancels the term
$G^{\Dir}_\Lambda(x,x_0)$ in \eqref{eq:Gq_Ginf} and implies
\[
G^{\Neu}_\Lambda(x,x_0) = \sum\limits_{k=1}^\infty \frac{u_k^{(\Lambda)}(x_0)\, u_k^{(\Lambda)}(x)}{\sigma_k^{(\Lambda)}},
\qquad x,x_0\in\pa, \Lambda < 0, 
\]
i.e., the restriction of the Neumann Green's function
$G^{\Neu}_\Lambda(x,x_0)$ with $\Lambda < 0$ onto $\pa \times
\pa$ can be interpreted as the kernel of an integral operator on
$\pa$, which is the inverse of the Dirichlet-to-Neumann operator
$\DtN_\Lambda$.  This relation is used for further analysis and for
numerical computation of its spectral properties (see, e.g.,
\cite{Chaigneau24}).

\section{Extensions}\label{sec:extensions}

In this section, we briefly discuss several extensions of the basic
setting considered in  \S\ref{sec:basic}.  Some of these extensions are rather
straightforward, while others present additional challenges.

\subsection{Mixed boundary conditions}\label{sec:mixed}

While Dirichlet, Neumann and Robin boundary conditions are canonical
choices, many applications require more flexibility on surface properties
and often use {\em mixed} boundary conditions \cite{Sneddon}, \cite{Duffy}.
For instance, the so-called narrow escape problem is a special case of the mixed 
(Zaremba) problem, where we impose the Dirichlet boundary condition on a
subset $\Gamma_\Dir$ of the boundary $\pa$ and the Neumann boundary condition on
the remaining part $\Gamma_\Neu = \pa \setminus
\Gamma_\Dir$ \cite{Holcman14}, \cite{Schuss}, \cite{Holcman}.  When the subset $\Gamma_\Dir$ is
small, it can be considered as a {\em singular} perturbation of the
Neumann condition.  Many spectral properties have been studied for the
related Laplace operator (see
\cite{Ozawa81}, \cite{Mazya85}, \cite{Ward93}, \cite{Ward93b}, \cite{Kolokolnikov05}, \cite{Cheviakov11}, and
references therein).

In the same vein, one can deal with mixed boundary conditions for the
Dirichlet-to-Neumann operator.  One of the most studied settings is
the so-called \emph{sloshing problem} in fluid dynamics, see
\cite{Faltinsen}, \cite{Fox83}, \cite{Ibrahim05}, \cite{Mayer12}, \cite{Troesch65}, \cite{Mayrand20}, and also the summary in \cite[\S7.1.2]{Levitin}.  Let us
consider an inviscid, incompressible, irrotational fluid in an open
container $\Omega\subset \R^d$, $d=2$ or $d=3$, whose Lipschitz boundary $\pa$ is decomposed into the flat free surface $\Gamma$ and container ``walls'' $\mathcal{W}$, $\pa = \Gamma\sqcup  \mathcal{W}$, see Figure \ref{fig:sloshing}.  

\begin{figure}[!htbp]
\centering
\includegraphics{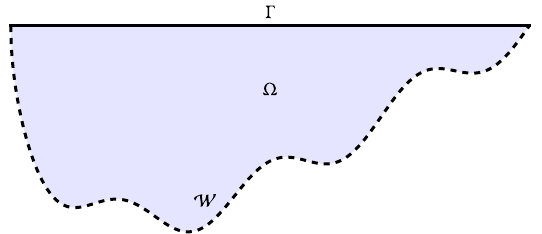}
\caption{A typical geometry of the sloshing problem in dimension $d=2$.}
\label{fig:sloshing}
\end{figure}

Considering small oscillations of the fluid due to gravity, and assuming that the surface tension on the free
surface $\Gamma$ is negligible, one deduces that the velocity potential $U$ satisfies the
mixed Steklov--Neumann eigenvalue problem
\begin{equation}\label{eq:sloshing}
\begin{cases}
\Delta U = 0\qquad&\text{in }\Omega,\\
\partial_n U = \sigma U\qquad&\text{on }\Gamma,\\
\partial_n U = 0\qquad&\text{on }\mathcal{W}.
\end{cases}
\end{equation}

The spectrum of \eqref{eq:sloshing} is discrete, $0=\sigma_1<\sigma_2\le\dots$, and the eigenpairs $\{ \sigma_k, U_k\}$ of
this  problem determine the modes of fluid
motion and its spatial frequencies.  
A lot of research has been dedicated to the study of the sloshing eigenvalues and eigenfunctions. 
For instance, Kozlov
and Kuznetsov investigated the dependence of the first nonzero sloshing
eigenvalue on the geometry of the free surface $\Gamma$
\cite{Kozlov04}, while the high spots (extrema) of the corresponding
sloshing mode in axisymmetric containers were numerically studied in
\cite{Kulczycki16}, see also \cite{Ammari20b}. In the planar case, the sharp asymptotics of the sloshing eigenvalues $\sigma_k$ as $k\to+\infty$, which takes into account the angles between the free surface $\Gamma$ and the walls $\mathcal{W}$, and which proves a long-standing conjecture of \cite{Fox83},  was recently obtained in \cite{LPPSslo}. This result plays a crucial role in the asymptotics \eqref{eq:Spoly} of the eigenvalues of the Steklov problem in curvilinear polygons.

More generally, let $\Omega\subset\mathbb{R}^d$ be a bounded Lipschitz domain, with its boundary decomposed into (up to) three subsets
\[
\pa =  \overline{\Gamma_\Dir \sqcup \Gamma_\Neu \sqcup \Gamma},
\]
possibly with $\Gamma_\Dir =\varnothing$ or $\Gamma_\Neu =\varnothing$. We will specify some further restrictions on this partition below. 
We define a \emph{partial Dirichlet-to-Neumann operator} $\DtN_{\Lambda, \Gamma}:  H^{1/2}(\Gamma)\to H^{-1/2}(\Gamma)$ which acts as 
\[
\DtN_{\Lambda, \Gamma}: u\mapsto \left.(\partial_n U)\right|_\Gamma,
\]
where $U=U^{(\Lambda)}$ is the solution of
\begin{equation}\label{eq:Helmholtz_mixed}
\begin{cases}
-\Delta U = \Lambda U\qquad&\text{in }\Omega,\\
U = 0\qquad&\text{on }\Gamma_\Dir,\\
U = u\qquad&\text{on }\Gamma,\\
\partial_n U = 0\qquad&\text{on }\Gamma_\Neu.
\end{cases}
\end{equation}

Without attempting to state the most general conditions on the boundary partition ensuring that the problem \eqref{eq:Helmholtz_mixed} is well-defined, see further \cite{Agranovich06}, \cite{Ott13}, we assume that each of the sets  $\Gamma$ and, if non-empty, $\Gamma_\mathrm{\#}$, $\mathrm{\#}\in\{\Dir,\Neu\}$, is open, has a finite number of disjoint components. If $d>2$, the interfaces between  the pairs of boundary components are assumed to be $(d-2)$-dimensional Lipschitz surfaces.  

Closely related to \eqref{eq:Helmholtz_mixed} is the homogeneous Zaremba spectral problem
\begin{equation}\label{eq:Zar_mixed}
\begin{cases}
-\Delta U = \Lambda U\qquad&\text{in }\Omega,\\
U = 0\qquad&\text{on }\widetilde{\Gamma}_\Dir:=\Gamma_\Dir \cup \Gamma,\\
\partial_n U = 0\qquad&\text{on }\Gamma_\Neu,
\end{cases}
\end{equation}
which has a discrete spectrum of eigenvalues $\left\{\lambda_k^{\Zar, \widetilde{\Gamma}_\Dir}\right\}$.

When defining the partial Dirichlet-to-Neumann operator $\DtN_{\Lambda, \Gamma}$, we first  additionally assume that $\Lambda$ is not an eigenvalue of the Zaremba Laplacian $-\Delta^{\Zar, \widetilde{\Gamma}_\Dir}_\Omega$ (that is, the one with the boundary conditions imposed as  in \eqref{eq:Zar_mixed}). This restriction can be lifted in the same manner as in \S\ref{subsec:DtNmap}: if $\Lambda=\lambda_k^{\Zar, \widetilde{\Gamma}_\Dir}$ for some $k$, we need to restrict the domain of  $\DtN_{\Lambda, \Gamma}$ to functions from $H^1(\Gamma)$ which are orthogonal in $L^2(\Gamma)$ to the normal derivatives,  restricted to $\Gamma$,  of the functions in the corresponding eigenspace of the Zaremba problem.

Informally, the spectral problem for the partial Dirichlet-to-Neumann map $\DtN_{\Lambda, \Gamma}$ can be stated as a \emph{weighted} boundary-value problem
\[
\begin{cases}
-\Delta U = \Lambda U\qquad&\text{in }\Omega,\\
\partial_n U = \sigma \rho U\qquad&\text{on }\partial\Omega,
\end{cases}
\]
where the ``weight function'' $\rho$ is defined on the boundary $\partial\Omega$ by
\[
\rho(x):=\begin{cases}
\infty,\qquad&\text{if }x\in\Gamma_\Dir,\\
0,\qquad&\text{if }x\in\Gamma_\Neu,\\
1,\qquad&\text{if }x\in\Gamma,
\end{cases}
\]
and the eigenfunctions are $u=U|_\Gamma$. 
Physical applications involving these problems were discussed in
\cite{Grebenkov20c}, \cite{Grebenkov23a}
(see also \S\ref{sec:applications}).  

Many basic properties of the Dirichlet-to-Neumann operator
$\DtN_\Lambda$ discussed in \S\ref{sec:basic} remain valid  for the partial Dirichlet-to-Neumann operator $\DtN_{\Lambda, \Gamma}$.  In particular, for
any $\Lambda \in \R$,
$\DtN_{\Lambda, \Gamma}$ is the self-adjoint operator, semi-bounded below, and with the discrete
spectrum of eigenvalues $\left\{\sigma_{\Gamma,k}^{(\Lambda)}\right\}_{k=1}^\infty$
accumulating to $+\infty$.  The eigenfunctions $\left\{u_{\Gamma,k}^{(\Lambda)}\right\}_{k=1}^\infty$
can be chosen to form a complete orthonormal basis in $L^2(\Gamma)$.  Additionally, if $\pa$ is smooth and $\Gamma$ has no boundary (that is, it is an isolated boundary component of $\pa$), then $\DtN_{\Lambda, \Gamma}$ is an elliptic pseudodifferential operator of order one on $\Gamma$. 

In a direct analogy with \eqref{eq:muk_minimax}, we have the minimax relation for the eigenvalues,
\begin{equation}\label{eq:muk_minimax2}
\sigma_{\Gamma,k}^{(\Lambda)} = \min_{\substack{\mathcal{L} \subset \mathcal{H}_\Lambda\left(\Omega, \Gamma_\Dir\right)\\\dim\mathcal{L} = k}}\  \max_{U \in \mathcal{L}\setminus\{0\}} 
\frac{\|\nabla U\|^2_{L^2(\Omega)} - \Lambda \| U\|^2_{L^2(\Omega)}}{\|U|_{\Gamma}\|^2_{L^2(\Gamma)}},  \qquad k\in\mathbb{N},
\end{equation}
where 
\[
\mathcal{H}_\Lambda\left(\Omega, \Gamma_\Dir\right)=\left\{U\in H^1(\Omega): -\Delta U-\Lambda U=0\text{ in }\Omega, \left.U\right|_{\Gamma_\Dir}=0|\right\}
\]
is the subspace of
$\Lambda$-harmonic functions in $H^1(\Omega)$ that vanish on $\Gamma_\Dir$.
Moreover, if $\Lambda < \lambda_1^{\Zar, \widetilde{\Gamma}_\Dir}$, then $\mathcal{H}_\Lambda\left(\Omega, \Gamma_\Dir\right)$
in \eqref{eq:muk_minimax2} can be replaced by the space
$H^1_0(\Omega, \Gamma_\Dir)$ of functions from $H^1(\Omega)$ that vanish on
$\Gamma_\Dir$.

Additionally, the analogues of Theorems \ref{thm:lambdadep}  and \ref{thm:analyticity} exist for the eigenvalues of the partial Dirichlet-to-Neumann maps: one needs only to replace in their statements Dirichlet eigenvalues $\lambda^\Dir$ by Zaremba eigenvalues $\lambda^{\Zar, \widetilde{\Gamma}_\Dir}$.

Many results presented in the previous sections can be easily
generalised for mixed boundary conditions, including Weyl's
law, asymptotic relation for the eigenvalues in the limits
$\Lambda\to 0$ and $\Lambda\to -\infty$, Courant's theorem, positivity
of the ground eigenfunction $U_1^{(\Lambda)}$, etc. For instance, the following modification of Corollary \ref{cor:Ndiff} holds for the number of non-positive eigenvalues of $\DtN_{\Lambda, \Gamma}$,
\begin{equation}\label{eq:Nlambda_0_mixed}
\mathcal{N}_{\DtN_{\Lambda,\Gamma}}(0)=\mathcal{N}_{-\Delta^{\Zar, \Gamma_\Dir}}(\Lambda)-\mathcal{N}_{-\Delta^{\Zar, \widetilde{\Gamma}_\Dir}}(\Lambda).
\end{equation}

At the same time, the presence of Dirichlet and/or Neumann boundary
conditions affects some basic properties of the spectrum.
For instance, if $\Gamma_\Dir \ne \varnothing$, the smallest eigenvalue
$\sigma_{\Gamma, 1}^{(0)}$ is strictly positive and not zero. Moreover, we have 
$\sigma_{\Gamma, 1}^{(\Lambda)}>0$ for all $\Lambda<\lambda_1^{\Zar, \Gamma_\Dir}$. At the same time, we cannot guarantee that 
$\sigma_{\Gamma, 1}^{(\Lambda)}$ is negative for all $\Lambda>\lambda_1^{\Zar, \widetilde{\Gamma}_\Dir}$ as it may happen that the right-hand side of \eqref{eq:Nlambda_0_mixed} vanishes, for example if $\Gamma_\Neu=\varnothing$ and $\Gamma$ is small.

In turn, mixed boundary conditions bring additional properties that
were not present in the conventional setting.  Let us look at  the domain
monotonicity for mixed problems on the basis of the following example of  \cite{Banuelos10}.  For three bounded simply-connected domains $D_0 \subset D_1
\subset D_2 \subset \R^d$, we define $\Omega_1 = D_1 \backslash D_0$
and $\Omega_2 = D_2 \backslash D_0$, see Fig. \ref{fig:domain_monotone}.
Clearly, $\Omega_1 \subset \Omega_2$; moreover, denoting $\Gamma = \partial D_0$, we have, for $i=1,2$, 
$\partial \Omega_i = \partial D_i \sqcup \Gamma$.  

\begin{figure}[!htbp]
\centering
\includegraphics{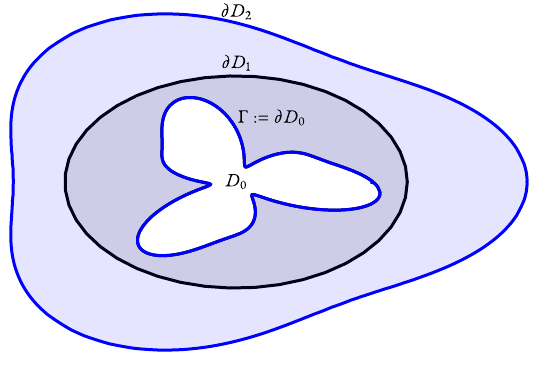}
\caption{
Domain monotonicity for the case of mixed boundary
conditions: three bounded domains $D_0\subset D_1\subset D_2$ and two bounded domains $\Omega_1 := D_1 \backslash D_0$ and
$\Omega_2 := D_2\backslash D_0$, $\Omega_1 \subset \Omega_2$.
\label{fig:domain_monotone}}
\end{figure}

We impose the Steklov
condition on $\Gamma$ and either Dirichlet or Neumann boundary
condition on $\partial D_i$.  Then the eigenvalues
$\sigma_{\Gamma, k}^{(\Lambda)}(\Omega_i)$ of the partial Dirichlet-to-Neumann operator
$\DtN_{\Gamma,\Lambda}$  satisfy the following domain
monotonicity properties, see \cite{Banuelos10} for the case $\Lambda=0$ which also covers more general geometries.

\begin{theorem}\label{thm:mixedDM}\ 
\begin{enumerate}
\item[\normalfont(i)] Let us impose the Dirichlet boundary conditions on $\partial D_1$ and $\partial
D_2$, therefore setting $\Gamma_\Dir = \partial D_i$ and $\Gamma_\Neu=\varnothing$ for both domains $\Omega_i$, $i=1, 2$. Assume additionally that $\Lambda < \lambda_1^\Dir(\Omega_2)$. Then
\[
\sigma_{\Gamma, k}^{(\Lambda)}(\Omega_1) \geq \sigma_{\Gamma,k}^{(\Lambda)}(\Omega_2)\qquad\text{for all }k \in \mathbb{N}.
\]
\item[\normalfont(ii)] Let us impose the Neumann boundary conditions on $\partial D_1$ and $\partial
D_2$, therefore setting $\Gamma_\Neu= \partial D_i$ and $\Gamma_\Dir=\varnothing$ for both domains $\Omega_i$, $i=1, 2$. Assume additionally that $\Lambda < 0$. Then
\[
\sigma_{\Gamma, k}^{(\Lambda)}(\Omega_1) \leq \sigma_{\Gamma,k}^{(\Lambda)}(\Omega_2)\qquad\text{for all }k \in \mathbb{N}.
\]
\end{enumerate}
\end{theorem}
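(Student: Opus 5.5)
Both parts will follow from the min-max characterisation \eqref{eq:muk_minimax2} in its enlarged form. There one minimises over subspaces of $H^1_0(\Omega_i,\Gamma_\Dir)$ rather than over $\Lambda$-harmonic functions. The plan is to compare the admissible spaces for $\Omega_1$ and $\Omega_2$ by extension in case (i) and by restriction in case (ii).

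\emph{Part (i).} Here $\widetilde\Gamma_\Dir=\pa\Omega_i$, so the relevant Zaremba eigenvalue is $\lambda_1^\Dir(\Omega_i)$. Strict domain monotonicity gives $\lambda_1^\Dir(\Omega_1)>\lambda_1^\Dir(\Omega_2)>\Lambda$. Hence for both domains we may use
\[
\sigma_{\Gamma,k}^{(\Lambda)}(\Omega_i)=\min_{\substack{\mathcal L\subset H^1_0(\Omega_i,\partial D_i)\\ \dim\mathcal L=k}}\ \max_{W\in\mathcal L\setminus\{0\}}\frac{\|\nabla W\|^2_{L^2(\Omega_i)}-\Lambda\|W\|^2_{L^2(\Omega_i)}}{\|W|_\Gamma\|^2_{L^2(\Gamma)}} .
\]
The justification is the same decomposition argument as for \eqref{eq:muk_minimax1}. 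Write $W=U+V$ with $U$ $\Lambda$-harmonic vanishing on $\partial D_i$, and $V\in H^1_0(\Omega_i)$. Then $[U,V]_\Lambda=0$ and $[V,V]_\Lambda\ge(\lambda_1^\Dir(\Omega_i)-\Lambda)\|V\|^2\ge0$.

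Next, extend any $W\in H^1_0(\Omega_1,\partial D_1)$ by zero to $\Omega_2$. The extension lies in $H^1_0(\Omega_2,\partial D_2)$, because it vanishes on $\partial D_1$ and so its zero extension is in $H^1$. This operation preserves the numerator and the trace on $\Gamma$. Each $k$-dimensional test space for $\Omega_1$ therefore yields one for $\Omega_2$ with identical Rayleigh quotients, which gives $\sigma_{\Gamma,k}^{(\Lambda)}(\Omega_2)\le\sigma_{\Gamma,k}^{(\Lambda)}(\Omega_1)$.

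\emph{Part (ii).} Here $\Gamma_\Dir=\varnothing$, and the relevant threshold is $\lambda_1^{\Zar,\Gamma}$. This is the first eigenvalue of the problem with Dirichlet conditions on $\Gamma$ and Neumann conditions on $\partial D_i$, and it is positive. Since $\Lambda<0$, the enlarged principle over all of $H^1(\Omega_i)$ holds for both domains.

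Take a $k$-dimensional subspace $\mathcal L\subset H^1(\Omega_2)$ that is optimal for $\Omega_2$, and restrict it to $\Omega_1$. Because $\Lambda<0$, the numerator can only decrease: $\|\nabla W\|^2_{L^2(\Omega_1)}-\Lambda\|W\|^2_{L^2(\Omega_1)}\le\|\nabla W\|^2_{L^2(\Omega_2)}-\Lambda\|W\|^2_{L^2(\Omega_2)}$. The trace on $\Gamma$ is unchanged. The numerator for $\Omega_2$ is strictly positive, so the Rayleigh quotients on $\Omega_1$ are no larger.

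The main point to check is that restriction keeps the dimension equal to $k$. The approach is to use the trace on $\Gamma$. Restricting to the $\Lambda$-harmonic subspace $\mathcal H_\Lambda(\Omega_2)$, which is legitimate by \eqref{eq:muk_minimax2}, the map $U\mapsto U|_\Gamma$ is injective. Indeed, if $U|_\Gamma=0$ and $\partial_nU=0$ on $\partial D_2$, then $[U,U]_\Lambda=0$, which forces $U=0$. Consequently, if $W|_{\Omega_1}=0$ then $W|_\Gamma=0$ and hence $W=0$. Thus the restricted space still has dimension $k$, and we obtain $\sigma_{\Gamma,k}^{(\Lambda)}(\Omega_1)\le\sigma_{\Gamma,k}^{(\Lambda)}(\Omega_2)$.

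The step I expect to require the most care is the validity of the enlarged min-max principle in each setting, that is, checking the spectral threshold conditions. Once those are verified, both monotonicity statements are direct comparisons of test spaces.
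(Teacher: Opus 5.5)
Your proposal is correct and follows the route the paper indicates: the min-max comparison of \cite{Banuelos10} based on \eqref{eq:muk_minimax2}, using zero extension in the Dirichlet case and restriction in the Neumann case, with the threshold conditions checked as you do. One small fix is needed in (ii). The space $\mathcal H_\Lambda(\Omega_2,\varnothing)$ appearing in \eqref{eq:muk_minimax2} imposes no condition on $\partial D_2$, so the trace on $\Gamma$ is not injective on it. Instead, take $\mathcal L$ to be the span of the first $k$ bulk eigenfunctions of $\Omega_2$, which do satisfy $\partial_n U=0$ on $\partial D_2$; alternatively, invoke unique continuation for $\Lambda$-harmonic functions. Either way, restriction to $\Omega_1$ then preserves dimension.
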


The proof of Theorem \ref{thm:mixedDM} follows almost directly the arguments of \cite{Banuelos10}  using the minimax relations \eqref{eq:muk_minimax2}.

\subsection{Exterior problems}\label{sec:exterior}

The Dirichlet-to-Neumann operators  and the corresponding Steklov-type problems are usually considered in the interior of bounded domains. However, in certain applications, in particular to diffusion problems, the exterior Dirichlet-to-Neumann map naturally arises, see \cite{Grebenkov21a, Grebenkov25}. We briefly overview this case below.
Let  $\Omega = \R^d \setminus \overline{\Omega_0}$,
where $\Omega_0$ is a bounded open set with Lipschitz boundary.
The major issue in defining the exterior Dirichlet-to-Neumann map for the Helmholtz equation with parameter $\Lambda$ is
to find the appropriate conditions satisfied by the solutions at infinity. Indeed, without further constraints, the $\Lambda$-harmonic extension to the exterior is not uniquely defined. As it turns out, the situation is quite different depending on the sign of $\Lambda$.
 
The case $\Lambda < 0$  has been studied  in \cite{Auchmuty13}, \cite{Auchmuty14}.
It was shown that the exterior Dirichlet-to-Neumann operator $\DtN_\Lambda^{\mathrm{ext}}$ can
be introduced using  the solutions of the Helmholtz equation $-(\Delta +
\Lambda)U = 0$  in the standard Sobolev space  $H^1(\Omega)$.
Moreover, as was proved in \cite{BardosMerigot}  (see also \cite[\S 3.3]{Bundrock25}) 
these solutions
exhibit exponential decay at infinity.  In short, for $\Lambda<0$ the exterior problem can be defined in  a similar way  as the interior one  (in particular, the variational characterisation \eqref{eq:muk_minimax1} holds) and no substantial difficulties arise. 

The case  $\Lambda = 0$ is in a way the most challenging.  It has been considered in detail in \cite{Bundrock25}. 
Indeed, the  solutions of the Laplace equation $\Delta U = 0$
may not decay at all or decay too slowly to ensure that $U \in H^1(\Omega)$.
For instance, the first Steklov eigenfunction of an exterior domain in two dimensions is constant.
Different approaches were developed to  define the exterior problem for the Dirichlet-to-Neumann operator $\DtN_0$.
One possibility is to introduce a parameter $\Lambda<0$ as above and consider the limit as $\Lambda \to 0^{-}$ \cite{Christiansen22}, \cite{Grebenkov25}.
Another option is to  consider the Steklov problem in a bounded
domain $\Omega_R = \Omega \cap B_R$, where $B_R$ is a  ball of radius $R$, with either Dirichlet or Neumann
boundary condition imposed on the sphere $\partial B_R$, and
consider  the limits of the associated partial Dirichlet-to-Neumann operators $\DtN_{\Lambda, \partial\Omega}$ in $\Omega_R$ as $R\to\infty$ \cite{Arendt15}. Note that in dimension two the limits for both Dirichlet and Neumann condition coincide, and in higher dimensions the limiting operators differ by a rank one perturbation. As follows from the results of \cite{Bundrock25}, the Dirichlet boundary condition leads to  a more ``canonical'' choice of the exterior operator.  In particular,  the spectra of the operators  $\DtN_\Lambda^{\mathrm{ext}}$ as  $\Lambda  \to 0^{-}$ converge to the spectrum of the exterior operator corresponding to the Dirichlet conditions in the construction above.

In fact, there are several other constructions that lead to an equivalent definition of the exterior problem. One can 
use the layer potential approach, cf. \S \ref{sec:representation}. In dimensions $ d \geq 3$ one can also consider  a modification of the variational characterisation   \eqref{eq:muk_minimax1} with the space  $H^1(\Omega)$ replaced by a certain space of  functions
with finite energy  (i.e., functions with their gradients in $L^2(\Omega)$ but not necessarily in $L^2(\Omega)$ themselves), see
\cite{Auchmuty14}, as well as  \cite{Auchmuty18}, \cite{Xiong23}. In dimension two,  an application of  a conformal map (e.g.\ a composition of an inversion and a reflection) transforms an exterior problem into a weighted interior problem in a (punctured) bounded domain.  We refer to \cite{Bundrock25} for further details and a comparison of different approaches.

Finally, if $\Lambda>0$, in order to define an exterior Dirichlet-to-Neumann operator   one  needs  to consider the solutions of the Helmholtz equation satisfying the Sommerfeld radiation condition, cf.  \S \ref{sec:representation}.  This leads to a non-self-adjoint eigenvalue problem, which is beyond the scope of this survey.
The non-self-adjointness can be easily  seen from the layer potential representation. Note that unlike the interior case (see  Remark \ref{rem:fundsol}), the radiation condition at infinity determines the choice of a complex-valued fundamental solution.

\begin{remark}
In  the  cases $\Lambda \le 0$  discussed  above, the Dirichlet-to-Neumann operators in
an exterior domain have a discrete spectrum. In particular, when solving
diffusion or heat transfer problems in exterior domains, one can still
employ spectral decompositions over eigenfunctions $u_k^{(\Lambda)}$ on the boundary 
and use their $\Lambda$-harmonic extensions $U_k^{(\Lambda)}$ inside the domain, despite
the fact that $\Omega$ is unbounded, see
\S\ref{sec:representation}.
\end{remark}

\subsection{Complex parameter $\Lambda$}\label{sec:complex}

Our previous discussion was focused on real $\Lambda$, for which the
Dirichlet-to-Neumann operator $\DtN_\Lambda$ is self-adjoint.  Allowing the parameter
$\Lambda$ in the Helmholtz equation to be complex can represent damping effects onto
wave propagation.  In diffusion-oriented applications, the complex
$\Lambda$ naturally emerges in the computation of inverse Laplace
transforms \cite{Grebenkov20}.  In this section, we
briefly mention a few 
results on $\DtN_\Lambda$ in this
setting.

The Dirichlet-to-Neumann operator for a
complex parameter was studied in \cite{Bogli22}. 
Let $\Omega \subset \R^d$ be a bounded Lipschitz domain and $\Lambda
\in \C \backslash \Spec\left(-\Delta^\Dir_\Omega\right)$.  The operator
$\DtN_\Lambda$ is closed, densely defined and 
$m$-sectorial, with a
compact resolvent in $L^2(\pa)$. 
In particular, its spectrum consists
of eigenvalues of finite algebraic multiplicity.  Nevertheless, $\DtN_\Lambda$ is no longer self-adjoint for $\Lambda\not\in\mathbb{R}$.

Additionally, the Dirichlet-to-Neumann operator $\DtN_\Lambda$ is
meromorphic with respect to the parameter $\Lambda\in \C$.  Its
singularities are poles of finite order, are real,  and coincide with the Dirichlet  eigenvalues of $\Omega$.
For $\Lambda\in \C \setminus \mathbb{R}$,
$\DtN_{\Lambda^*} = \left(\DtN_\Lambda\right)^*$, and the corresponding
quadratic forms are holomorphic of type A (see \cite{Bogli22} for
details).
Even for complex $\Lambda$, the Robin--Dirichlet-to-Neumann duality  still holds:
for any $\sigma\in \C$ and any $\Lambda\in \C \backslash
\Spec\left(-\Delta^\Dir_\Omega\right)$, we have that $\Lambda \in
\Spec\left(-\Delta_\Omega^{\Rob,-\sigma}\right)$ if and only if $\sigma \in
\Spec\left(\DtN_\Lambda\right)$.  Moreover, $U$ is an eigenfunction of
$-\Delta_\Omega^{\Rob,-\sigma}$ corresponding to $\Lambda$ if and
only if $U|_{\pa}$ is an eigenfunction of $\DtN_\Lambda$ corresponding
to $\sigma$.

As an example, let us consider the spectral problem for $\DtN_\Lambda$ with complex $\Lambda$ in the unit disk $\mathbb{D}$. The eigenvalues are computed using the same formula as in 
\eqref{eq:sigmadisk},
\[
\sigma^{(\Lambda)}_{(n)}= \frac{\sqrt{\Lambda}J'_n(\sqrt{\Lambda})}{J_n(\sqrt{\Lambda})},\qquad n=0,1,2,\dots,
\]
and have multiplicity one if $n=0$ and two if $n>0$. The top plot in Figure \ref{fig:muk_diskC} shows the modulus of $\sigma^{(\Lambda)}_{(0)}$ plotted as a function of a complex $\Lambda$: as expected, we see blow-ups at real eigenvalues $j_{0,1}^2\approx 5.78319$, $j_{0,2}^2\approx 30.4713$, and $j_{0,3}^2\approx 74.887$ of the Dirichlet Laplacian in the disk. The plots of the real and imaginary parts of this eigenvalue are very similar. The bottom figures show the trajectories swept by $\sigma^{(\Lambda)}_{(0)}$ as $\Lambda$ changes along a circle of a given radius in a complex plane. The macro view in the left image shows that the eigenvalue curves resemble circles, however zooming in near the origin in the right image demonstrates a more complicated behaviour. 

\begin{figure}[!htbp]
\centering
\includegraphics{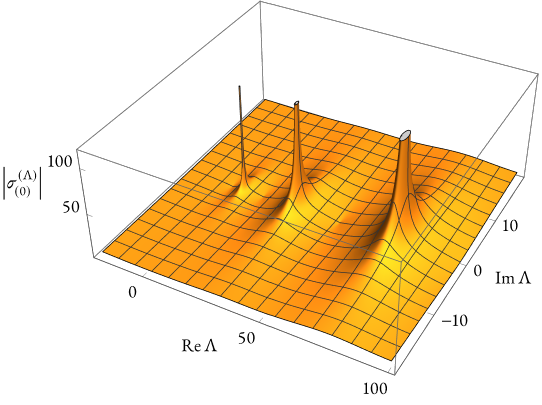}

\includegraphics{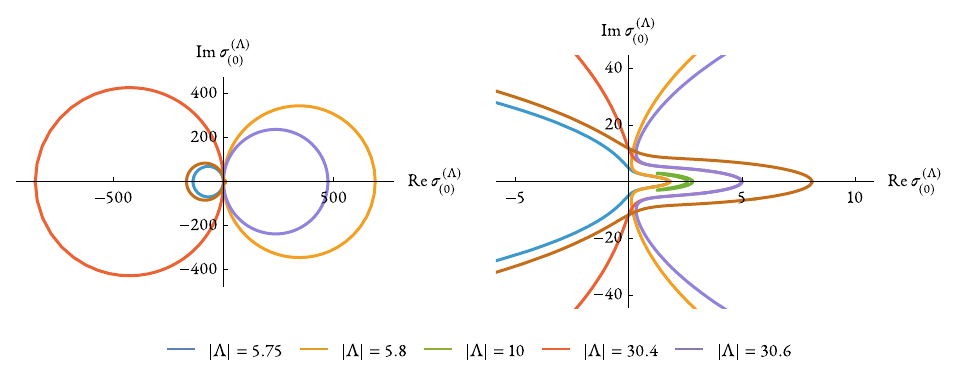}
\caption{The absolute value of $\sigma^{(\Lambda)}_{(0)}$ plotted as a function of complex $\Lambda$ (top). The images of circles of a given radius under the map $\Lambda\mapsto \sigma^{(\Lambda)}_{(0)}$ (bottom left), and their zoom near the origin (bottom right).
\label{fig:muk_diskC}}
\end{figure}

\section{Numerical computation and applications}\label{sec:applications}

In this section, we briefly recall some usual numerical techniques for computing the eigenvalues and
eigenfunctions of the Dirichlet-to-Neumann operator. After that, we discuss two applications: the domain decomposition, see \S\ref{sec:decomp}, and the
encounter-based approach to diffusion-controlled reactions, see \S\ref{sec:reactions}.

\subsection{Finite-element methods}\label{sec:FEM}

Finite-difference and finite-element methods allow one to discretise
the spectral problem \eqref{eq:SteklovLambda} on a regular lattice or
a mesh and thus to reduce the Helmholtz equation to a large set of
linear algebraic equations.  For this purpose, one can either employ
finite-difference approximations for the Laplace operator $\Delta$ and
the normal derivative $\partial_n$, or a weak formulation of the
Helmholtz equation with appropriate basis functions.  In both cases,
one constructs a matrix representing the spectral problem for
$\DtN_\Lambda$ at a given spatial resolution, that needs to be
diagonalised to approximate the eigenvalues $\sigma_k^{(\Lambda)}$ and
the eigenfunctions $U_k^{(\Lambda)}$.  Flexibility is one of the
advantages of these methods that allows one to deal in the same way
with more general second-order elliptic operators.  One potential drawback
is the need to carefully refine the mesh near the
boundary to resolve bulk eigenfunctions localised there.

Various improvements
have been proposed to amend these limitations.  For instance, an
isoparametric variant of the finite-element method for solving Steklov
eigenvalue problems in $\R^d$ for second-order self-adjoint elliptic
differential operators was developed in \cite{Andreev04}.  Other
modifications and improvements include a virtual element method in
planar domains \cite{Mora15}, a two-grid discretisation scheme
\cite{Li11}, \cite{Bi11}, a finite element multi-scale discretisation with an
adaptive algorithm based on the shifted inverse iteration \cite{Bi16},
an iterative multilevel approach \cite{Xie13}, nonconforming
finite-element methods \cite{Yang09}, \cite{Li13}, and
discontinuous Galerkin methods for non-selfadjoint Steklov
eigenvalue problems \cite{MengMei}, \cite{Wang22}.  Note that the numerical
efficiency of these methods is typically tested in the planar case only.  An
extension of the finite-element method to exterior problems by using a
transparent boundary condition was discussed in \cite{Grebenkov25}.

\subsection{Boundary-element methods and layer potentials}\label{sec:BEM}

Another common numerical approach to finding the spectrum of the
Dirichlet-to-Neumann operator $\DtN_\Lambda$ is to  reformulate
the corresponding eigenvalue problem in terms of an equivalent boundary
integral equation (see \cite{Tang98},  \cite{Chen20},  \cite{Bruno20},  \cite{Akhmetgaliyev17}
and references therein). This approach extends to a more general
non-self-adjoint setting of complex $\Lambda$ \cite{Ma22}, \cite{Nigam25}.
Using  \eqref{eq:DtN_VK}, one can rewrite the Steklov condition as
\begin{equation}\label{eq:BIE}
\frac12 u(x) + \int\limits_{\pa} \partial_{n_{y}} \Phi_\Lambda(x - y) u(y) \, \dr y 
= \sigma \int\limits_{\pa} \Phi_\Lambda(x-y) u(y) \, \dr y,  \qquad x\in \pa,
\end{equation}
where $u(x)$ is the restriction of $U(x)$ onto $\pa$.
The boundary integral equation \eqref{eq:BIE} represents a generalised
eigenvalue problem  
which determines the eigenvalues and eigenfunctions of the
Dirichlet-to-Neumann operator $\DtN_\Lambda$.  In practice, this
equation needs to be discretised and regularised to amend
singularities of the fundamental solution and its derivatives.  In
this way, the eigenvalue problem for the Dirichlet-to-Neumann map is
reduced to a system of linear algebraic equations; the associated
matrix can then be diagonalised to approximate the eigenvalues
$\sigma_k^{(\Lambda)}$, whereas the eigenvectors of that matrix allow
one to approximate the eigenfunctions $u_k^{(\Lambda)}$.  In turn, the
bulk eigenfunctions $U_k^{(\Lambda)}$ are then found via
\eqref{eq:u_potential}.  The advantage of this method is the
dimensionality reduction when the partial differential equation in
$d$-dimensional domain $\Omega$ is replaced by an integral equation on
the $(d-1)$-dimensional boundary $\pa$; as a consequence, the sizes of
the resulting matrices are substantially reduced, but they are no longer
sparse.  Moreover, these matrices are in
general ill-conditioned,  which requires additional computational
treatment (e.g., a singular value decomposition).
For planar domains, several improvements were proposed such as
weighting with the fundamental solutions of the Laplace equation
\cite{Turk} or a fast wavelet collocation method \cite{Long}.  

\subsection{Method of fundamental solutions}\label{sec:fundsol}

Yet another technique relies on the method of fundamental solutions,
see \cite{Kupradze64}, and employed by Antunes and Alves in their
study of various eigenvalue problems \cite{Alves09}, \cite{Alves13},
\cite{Antunes11}.  In this method, the solution of the Laplace (or
Helmholtz) equation inside the domain is approximated by a linear
combination of fundamental solutions $\Phi_\Lambda(x-x_k)$ at some
points $x_k$.  The number of these points, their locations, and the
weights of terms are adjusted to approximate the boundary conditions
with required accuracy.  The method is mesh-free, which allows fast
and accurate computations, even though the underlying matrices
can again be ill-conditioned (see \cite{Barnett08} for stability and
convergence analysis of the method of fundamental solutions for
Helmholtz problems on analytic domains).  This method was adopted to solving the Steklov and related spectral problems  in \cite{Bogosel16}.

\begin{remark} 
It is worth noting that in all methods discussed in \S\S\ref{sec:FEM} --\ref{sec:fundsol}, it is
computationally challenging to maintain accuracy or even identify negative eigenvalues of $\DtN_\Lambda$ for  $\Lambda$  near the Dirichlet
eigenvalues  (see \cite{Nigam25} for more details). 
\end{remark}

\subsection{Domain decomposition}\label{sec:decomp}

Dirichlet-to-Neumann operators are often used as a tool for analysing
and solving spectral and scattering problems in unbounded domains via
the method of \emph{domain decomposition}.  A typical example of such
an approach is a waveguide-like domain $\Omega\subset\mathbb{R}^d$
with regular ends, defined as an interior of $\overline{\Omega_0\sqcup
\Omega_1\sqcup\dots\sqcup \Omega_m}$, where $\Omega_0$ is a bounded
Lipschitz domain, and $\Omega_j$, $j=1,\dots,m$, are semi-infinite
cylinders with the straight bases
$\Gamma_j:=\overline{\Omega_0}\cap\overline{\Omega_j}$, see Figure
\ref{fig:decomp}.

\begin{figure}[!htbp]
\centering
\includegraphics{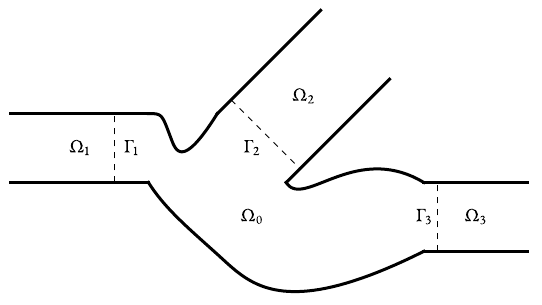}
\caption{An example of a domain decomposition}
\label{fig:decomp}
\end{figure}

We consider the spectral problem for the Laplacian
$-\Delta^{\mathcal{B}}$ in $\Omega$ with some self-adjoint boundary
condition $\left.\mathcal{B} U\right|_{\pa}=0$ (which may be a Dirichlet,
a Neumann, or a Robin one, or a mix of those imposed on parts of the boundary), and some appropriate
conditions at infinity which are determined by whether we are looking
for eigenvalues, either below or embedded into the essential spectrum,
or resonances (the poles of the scattering matrix).  We reduce the
problem to the one in a bounded domain $\Omega_0$. First, we
construct, for $j=1,\dots,m$, the partial Dirichlet-to-Neumann maps
$\DtN_{\Lambda, \Gamma_j}$ on $\Gamma_j$ with respect to an unbounded
domain $\Omega_j$ and subject to the boundary conditions on
$\pa_j\setminus\Gamma_j$ and at infinity inherited from the original
problem. This is typically done by separation of variables.  Then,
using elliptic regularity, we reformulate the original spectral
problem as
\begin{equation}\label{eq:domdec}
\begin{cases}
-\Delta U =\Lambda U\qquad&\text{in }\Omega_0,\\
 \mathcal{B} U = 0&\text{on }\pa_0\setminus\Gamma,\\
 \partial_n U + \DtN_{\Lambda, \Gamma_j}\left(U|_{\Gamma_j}\right)=0&\text{on }\Gamma_j, j=1,\dots,m,
 \end{cases}
\end{equation}
where $\Gamma=\cup_{j=1}^m \Gamma_j$ denotes the ``total''
interface.  Equivalently, problem \eqref{eq:domdec} can be re-stated
as a problem on $\Gamma$ only,
\[
\left(\DtN_{\Lambda, \Gamma}  + \sum_{j=1}^m \DtN_{\Lambda, \Gamma_j}\chi_j\right) u  = 0\qquad\text{on }\Gamma, 
\]
where $u:=U|_{\Gamma}$, $\chi_j$ is the characteristic function of
$\Gamma_j$, and $\DtN_{\Lambda, \Gamma}$ is the partial
Dirichlet-to-Neumann map on $\Gamma$ with respect to the interior
domain $\Omega_0$, with the inherited boundary condition on
$\pa_0\setminus\Gamma$.

For various examples of this approach, see \cite{Smith96},  \cite{LevMar},  \cite{Exner},  \cite{Delitsyn18},  \cite{Delitsyn22},  \cite{LevSto}. 

\subsection{Application to diffusion-controlled reactions}\label{sec:reactions}

The eigenvalues and eigenfunctions of the Dirichlet-to-Neumann
operator $\DtN_\Lambda$ have recently found immediate applications in
the theory of diffusion-controlled reactions in chemical physics
\cite{Grebenkov20}, \cite{Grebenkov23c}.  For instance, in most biochemical
reactions in living cells, the reactants have first to diffuse towards
their partners, and this diffusion step can be the rate-limiting
factor \cite{Berg85}, \cite{Lindenberg}.  This is also a typical scenario of
heterogeneous catalysis when the molecules need to reach a catalytic
boundary to be chemically transformed.  The most basic setting of
ordinary diffusion in a bounded domain $\Omega$ with partially
reactive boundary $\pa$ is usually described by the heat kernel
$K^{\Rob,\gamma}(x, x_0; t)$ (also known as the propagator in
physics literature) which satisfies the heat equation with Robin
boundary condition,
\begin{equation}\label{eq:heat}
\begin{cases}
\partial_t K^{\Rob,\gamma}(x, x_0; t) = \Delta K^{\Rob,\gamma}(x, x_0; t),\qquad &x\in \Omega,\\
\partial_n K^{\Rob,\gamma}(x, x_0; t) + \gamma K^{\Rob,\gamma}(x, x_0; t)  = 0, \qquad &x\in \pa,
\end{cases}
\end{equation}
where the diffusion coefficient is set to $1$, $\gamma > 0$ is a
constant reactivity of the boundary, and $x_0 \in \Omega$ is a source
point entering through the initial condition $K^{\Rob,\gamma}(x,
x_0; 0) = \delta(x-x_0)$.  The heat kernel admits the standard
probabilistic interpretation: if a particle started from a point
$x_0$ at time $0$, it is found in a $\dr x$ vicinity of a point $x$
at time $t$ with probability $K^{\Rob,\gamma}(x, x_0; t)\,\dr x$.
As a consequence, the heat kernel was used to describe various
properties of diffusion-controlled reactions such as time evolution of
the concentration of particles, the survival probability of a single
particle, the distribution of its first-reaction time, etc., see
\cite{Redner}, \cite{Metzler}, \cite{Grebenkov}.
The common way to analyse heat kernels consists in reducing the
parabolic problem \eqref{eq:heat} to the elliptic problem \eqref{eq:tildeGq}
via the Laplace transform:
\[
G^{\Rob,\gamma}_\Lambda(x, x_0) = \int\limits_0^\infty \er^{\Lambda t} \, K^{\Rob,\gamma}(x, x_0; t) \, \dr t,
\qquad \Lambda \leq 0.
\]
The Robin Green's function $G^{\Rob,\gamma}_\Lambda(x, x_0)$ with
negative $\Lambda$ also appears in the description of steady-state
diffusion in a reactive medium, in which the diffusing particles
spontaneously disappear in the bulk with the rate proportional to
$|\Lambda|$.
 
In the above conventional description via the heat kernel or Green's function, the dependence on the reactivity
parameter  $\gamma$, which enters through the Robin boundary condition, is implicit; in particular, if one needs to optimise
the system with respect to this parameter, Green's function or related quantity needs to be computed numerically
for each value of $\gamma$. 
This fundamental issue is resolved when using the spectral
expansions \eqref{eq:Gq_Ginf} over the bulk eigenfunctions
$U_k^{(\Lambda)}$, in which $\gamma$ appears {\em explicitly}.  This
spectral form clarifies the role of the reactivity and opens a
possibility for addressing various optimisation problems in chemical
engineering.
This key feature was systematically employed to investigate the role
of the boundary and its reactivity onto efficiency of
diffusion-controlled reactions \cite{Grebenkov23c}.  Moreover, the
encounter-based approach introduced in \cite{Grebenkov20} allows one
to generalise the theory to deal with more sophisticated boundary
reactions, far beyond the conventional ones described by Robin
boundary condition.   
In fact, in most applications, the surface reaction occurs after a
``sufficient number'' of arrivals of the particle on the reactive
boundary.  As the statistics of such arrivals is characterised by the
boundary local time $\ell_t$  (see Remark \ref{rem:Sato}), the reaction event is triggered at a
random time $\tau = \inf\left\{t > 0: \ell_t > \hat{\ell}\right\}$ when
$\ell_t$ exceeds an independent random threshold $\hat{\ell}$.  The
position of the reaction event can thus be obtained by averaging
the heat trace $\Sigma^{(\Lambda)}(x,\hat{\ell};x_0)$ given by \eqref{eq:Sigma_def} over random realisations of
the threshold $\hat{\ell}$ so that the distribution of this threshold
determines the nature of modelled surface reactions.  For instance, if
$\hat{\ell}$ obeys an exponential distribution with the rate $\gamma$,
the average yields
\[
\int\limits_0^\infty \gamma \er^{-\gamma \ell} \, \Sigma^{(\Lambda)}(x,\ell;x_0)\, \dr\ell
= \sum\limits_{k=1}^\infty u_k^{(\Lambda)}(x) \, u_k^{(\Lambda)}(x_0) \frac{\gamma}{\gamma + \sigma_k^{(\Lambda)}} 
= \gamma G_\Lambda^{\Rob,\gamma}(x,x_0) 
\]
for $x, x_0 \in \pa$, where the second equality follows from the first relation in \eqref{eq:Gq_Ginf}. This is precisely the probability flux density
that determines the position of the reaction event on the boundary.
One sees that the exponential law of the threshold corresponds to the
conventional setting of diffusion-controlled reactions with a constant
reactivity $\gamma$ described by the Robin boundary condition.  In
turn, other distributions of the threshold allows one describe more
sophisticated surface reactions with encounter-dependent reactivity
such as, e.g., progressive activation or passivation of the boundary
by diffusing particles \cite{Grebenkov20}, \cite{Grebenkov23b}.  In this
general setting, the conventional tools such as the Robin heat kernel
or Green's function are not suitable anymore, whereas the spectral
expansion \eqref{eq:Sigma_def} in terms of the eigenfunctions
$u_k^{(\Lambda)}$ 
plays the
central role. The semigroup of the Dirichlet-to-Neumann operator, $\exp(-\ell
\DtN_\Lambda)$, and the spectral expansion \eqref{eq:Sigma_def} of the
associated heat kernel $\Sigma^{(\Lambda)}(x,\ell;x_0)$ are thus the
main ``building blocks'' of this framework \cite{Grebenkov20c}.
In addition, one gets access to the statistics of encounters between
diffusing particles and to their boundary local time
\cite{Grebenkov19c},  \cite{Grebenkov21a}.

\appendix
\section{Ordering the Dirichlet-to-Neumann eigenvalues of the disk for $\Lambda<0$}\label{sec:ordering}

In this Appendix, we provide an elementary proof that the eigenvalue curves
$\sigma_{(n)}^{(\Lambda)}$ of the Dirichlet-to-Neumann operator for the unit disk $\mathbb{D}$, given by  \eqref{eq:sigmadisk}, do not cross each other for $\Lambda < \lambda_1^\Dir(\mathbb{D})= j_{0,1}^2$. More precisely, we prove the following
\begin{theorem}
Let $n, m\in\{0,1,2,\dots\}$, with $n<m$.  Then $\sigma_{(n)}^{(\Lambda)}\ne \sigma_{(m)}^{(\Lambda)}$ for all $\Lambda<j_{n,1}^2$, and the order of eigenvalues 
\eqref{eq:orderdisk} is preserved for all $\Lambda < j_{0,1}^2$.
\end{theorem}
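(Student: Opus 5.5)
Set $\sigma_{(n)}(\Lambda):=\sigma^{(\Lambda)}_{(n)}=F_n'(1)/F_n(1)$, the logarithmic derivative at $r=1$ of the radial profile $F_n=F^{(\Lambda)}_n$. The plan is to show that this quantity is strictly increasing in $n$ for every fixed $\Lambda<j_{n,1}^2$. The tool will be a Riccati equation in the radial variable together with a comparison argument.

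First we set up the Riccati equations. For $\Lambda<j_{n,1}^2$ the profile $F_n(r)$ is positive on $(0,1]$. This holds for $\Lambda\le 0$ trivially, since $I_n>0$ and $r^n>0$. For $0<\Lambda<j_{n,1}^2$ it holds because $\sqrt{\Lambda}\,r<j_{n,1}$. Since $j_{m,1}>j_{n,1}$ for $m>n$, the same positivity holds for $F_m$.

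Define $q_n(r):=rF_n'(r)/F_n(r)$ on $(0,1]$, so that $\sigma_{(n)}(\Lambda)=q_n(1)$. The profile satisfies the Bessel equation
\[
F_n''+\tfrac1r F_n'+\left(\Lambda-\tfrac{n^2}{r^2}\right)F_n=0 .
\]
A direct computation then gives
\[
r q_n'(r)=n^2-\Lambda r^2-q_n(r)^2 .
\]
Near $r=0$ we have $F_n(r)\sim c\,r^n$, so $q_n(r)\to n$ as $r\to0^+$.

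Next we compare $q_n$ and $q_m$. Put $w:=q_m-q_n$ for $m>n$. Subtracting the two Riccati equations gives
\[
r w'=(m^2-n^2)-(q_m+q_n)\,w .
\]
This is a linear first-order equation for $w$ with continuous coefficients on $(0,1]$, and $w(0^+)=m-n>0$.

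Suppose $w$ has a first zero $r_0\in(0,1]$. Then at $r_0$ we get $r_0w'(r_0)=m^2-n^2>0$. Hence $w$ is negative just to the left of $r_0$, which contradicts $w>0$ on $(0,r_0)$. So $w>0$ on all of $(0,1]$, and in particular
\[
\sigma_{(m)}(\Lambda)=q_m(1)>q_n(1)=\sigma_{(n)}(\Lambda) \qquad\text{for all } \Lambda<j_{n,1}^2 .
\]
This proves the first assertion. The only delicate point is the behaviour at $r=0$, where the coefficient $q_m+q_n$ stays bounded (it tends to $m+n$). This can be handled by working on $[\varepsilon,1]$ with $\varepsilon$ small enough that $w>0$ there initially.

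Finally, for $\Lambda<j_{0,1}^2$ we have $\Lambda<j_{n,1}^2$ for every $n$, since $j_{n,1}\ge j_{0,1}$. The strict chain
\[
\sigma_{(0)}<\sigma_{(1)}<\sigma_{(2)}<\cdots
\]
therefore holds. Together with the multiplicities (one for $n=0$, two for $n\ge1$) and the completeness of the Fourier basis, this gives exactly the ordering \eqref{eq:orderdisk}. I expect no serious obstacle. The main care needed is verifying positivity of $F_n$ on $(0,1]$ in the range $0<\Lambda<j_{n,1}^2$, and the limit $q_n(0^+)=n$, both of which follow from standard facts about Bessel zeros and the series expansions of $J_n$ and $I_n$.
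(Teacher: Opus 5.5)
Your proof is correct, and it takes a genuinely different route from the paper. The paper fixes $z=\sqrt{-\Lambda}$ (resp.\ $z=\sqrt{\Lambda}$) and differentiates $zI_\nu'(z)/I_\nu(z)$ (resp.\ $zJ_\nu'(z)/J_\nu(z)$) with respect to the \emph{order} $\nu$. It imports integral representations of $\partial_\nu I_\nu$ and $\partial_\nu J_\nu$ from the special-function literature and uses the Wronskian to obtain the explicit positive quantity $\frac{2\nu}{I_\nu^2(z)}\int_0^z I_\nu^2(t)\,t^{-1}\,\dr t$ (and its $J_\nu$ analogue). You instead work in the \emph{radial} variable. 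The Riccati equation $rq'=n^2-\Lambda r^2-q^2$ makes $w=q_m-q_n$ solve a linear equation with positive forcing $m^2-n^2$. Hence $w$ can never come down to zero from its initial value $m-n>0$, which is a Sturm-type comparison.

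Your route is elementary and self-contained, and it treats $\Lambda<0$, $\Lambda=0$ and $0<\Lambda<j_{n,1}^2$ uniformly. It also uses the hypothesis $\Lambda<j_{n,1}^2$ exactly once and transparently, to keep $F_n$ and $F_m$ zero-free on $(0,1]$. By contrast, the paper's interpolation in $\nu$ tacitly needs $J_\nu(\sqrt{\Lambda})\ne 0$ along the whole segment $\nu\in[n,m]$. Your argument also gives the subsequent remark on balls $\mathbb{B}^d$ essentially for free: there $rq'=\ell(\ell+d-2)-\Lambda r^2-(d-2)q-q^2$, and the same first-zero argument applies to the difference. The paper's route, in exchange, yields a closed-form expression for the rate of change in the continuous order parameter.

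One small remark on the final step. The ordering you derive is $\sigma_1^{(\Lambda)}=\sigma_{(0)}^{(\Lambda)}$ and $\sigma_{2k}^{(\Lambda)}=\sigma_{2k+1}^{(\Lambda)}=\sigma_{(k)}^{(\Lambda)}$, that is, $\sigma_k^{(\Lambda)}=\sigma^{(\Lambda)}_{(\lfloor k/2\rfloor)}$. This is the correct ordering, but it does not literally match \eqref{eq:orderdisk}: the index $\lfloor (k+1)/2\rfloor$ displayed there is off by one, so your conclusion is the intended statement rather than ``exactly'' the displayed formula.
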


\begin{proof}
The statement for $\Lambda=0$ is obviously true. Let us first consider the case $\Lambda < 0$. Define, for $\nu\ge 0$ and $z>0$,  the function 
\[
\Phi(\nu, z):=\frac{z I'_\nu(z)}{I_\nu(z)}=z\frac{\dr\left(\ln I_\nu(z)\right)}{\dr z},
\]
so that 
\[
\sigma_{(n)}^{(\Lambda)} = \Phi\left(n, \sqrt{-\Lambda}\right).
\]
We will show that for a fixed $z>0$, the function $\Phi(\nu, z)$ is strictly monotone increasing in $\nu$ by evaluating 
\[
\frac{\partial \Phi(\nu, z)}{\partial\nu} = z \frac{\partial^2\left(\ln I_\nu(z)\right)}{\partial\nu\partial z}. 
\]
Using the following representation \cite{Gonzalez-Santander18} of the derivatives of the modified Bessel functions with respect to order,  
\[
\frac{\partial I_\nu(z)}{\partial \nu} = -2\nu \left(I_\nu(z) \int\limits_z^\infty \frac{K_\nu(t) I_\nu(t)}{t}\,\dr t
+ K_\nu(z) \int\limits_0^z \frac{I_\nu^2(t)}{t}\, \dr t\right),
\]
we get
\[
\begin{split}
\frac{\partial \Phi(\nu, z)}{\partial\nu} & = z\frac{\partial\left(\frac{1}{I_\nu(z)}\,\frac{\partial I_\nu(z)}{\partial\nu}\right)}{\partial z}=
 -2\nu z \frac{\partial}{\partial z} 
 \left(\int\limits_z^\infty \frac{K_\nu(t) I_\nu(t)}{t}\,\dr t
+ \frac{K_\nu(z)}{I_\nu(z)} \int\limits_0^z \frac{I_\nu^2(t)}{t}\,\dr t\right) \\
& = -2\nu z \left(\int\limits_0^z \frac{I_\nu^2(t)}{t}\,\dr t\right)  \frac{\partial}{\partial z}\left(\frac{K_\nu(z)}{I_\nu(z)}\right) =
\frac{2\nu}{I_\nu^2(z)} \int\limits_0^z \frac{I_\nu^2(t)}{t}\, \dr t,
\end{split}
\]
where 
we used the value of the Wronskian $K'_\nu(z)
I_\nu(z) - I'_\nu(z) K_\nu(z) = -1/z$ in the last equality.  As $I_\nu(z)>0$ for all $z > 0$, we conclude that $\frac{\partial \Phi(\nu, z)}{\partial\nu}>0$, which
implies
that $\sigma_{(n)}^{(\Lambda)} < \sigma_{(m)}^{(\Lambda)}$ for any $n < m$ and any
$\Lambda \le 0$.

Let now $\Lambda>0$. Define, for $\nu\ge 0$ and $z>0$ such that $J_\nu(z)\ne 0$,  the function 
\[
\Psi(\nu, z):=\frac{z J'_\nu(z)}{J_\nu(z)},
\]
so that in this case
\[
\sigma_{(n)}^{(\Lambda)} = \Psi\left(n, \sqrt{\Lambda}\right).
\]
Using the representation \cite{Dunster17} of the derivatives of the Bessel functions with respect to order,
\[
\frac{\partial J_\nu(z)}{\partial \nu} = \pi \nu \left(J_\nu(z) \int\limits_z^\infty \frac{J_\nu(t) Y_\nu(t)}{t}\,\dr t
+ Y_\nu(z) \int\limits_0^z \frac{J_\nu^2(t)}{t}\,\dr t\right),
\]
we deduce that
\[
\frac{\partial \Psi(\nu, z)}{\partial\nu} = \frac{2\nu}{J_\nu^2(z)} \int\limits_0^z \frac{J_\nu^2(t)}{t}\,\dr t >0.
\]  
The statement of the theorem for $\Lambda<j_{n,1}^2$ now follows as a consequence and with account of $j_{n,1}<j_{m,1}$ for $0\le n<m$.
\end{proof}

\begin{remark}
A  similar argument shows that the eigenvalues of $\DtN_\Lambda$ in a higher-dimensional ball $ \mathbb{B}^d$, $d\ge 3$, also preserve their order for $\Lambda < \lambda_1^\Dir(\mathbb{B}^d)$.
\end{remark}

\section{Proof of Proposition \ref{prop:dLambda}}\label{sec:proofdLambda}

Set $\sigma_0:=\sigma_k^{(\Lambda_0)}$, $u_0:=u_k^{(\Lambda_0)}$, and $U_0:=U_k^{(\Lambda_0)}$. 
Using the analyticity of eigenvalues of $\DtN_\Lambda$ in $\Lambda$, we seek its eigenvalue $\sigma$, the corresponding eigenfunction $u$ with $\|u\|_{L^2(\pa)}=1$, and its $\Lambda$-harmonic extension $U$ for $\Lambda=\Lambda_0+\epsilon$, $\epsilon\to 0$, in the form
\begin{equation}\label{eq:series}
\begin{split}
\sigma &= \sigma_0 + \epsilon \tilde{\sigma}_1 + \epsilon^2 \tilde{\sigma}_2+O(\epsilon^3),\\
u &= u_0+ \epsilon\tilde{u}_1+\epsilon^2 \tilde{u}_2+O(\epsilon^3),\\
U &= U_0+ \epsilon\tilde{U}_1+\epsilon^2 \tilde{U}_2+O(\epsilon^3).
\end{split}
\end{equation}
Once we have found the coefficients of $\sigma$ expansion, we would immediately get $\left.\frac{\dr \sigma_k^{(\Lambda)}}{\dr \Lambda}\right|_{\Lambda=\Lambda_0}= \tilde{\sigma}_1$ and $\left.\frac{\dr^2 \sigma_k^{(\Lambda)}}{\dr \Lambda^2}\right|_{\Lambda=\Lambda_0}= 2\tilde{\sigma}_2$.
 
 Substituting \eqref{eq:series} into \eqref{eq:efexplicit}, and collecting terms with $\epsilon$ and $\epsilon^2$, we arrive at
 \begin{equation}\label{eq:eps1}
 \begin{cases}
 -\Delta\tilde{U}_1-\Lambda_0\tilde{U}_1 =U_0\qquad&\text{in  }\Omega,\\
 \partial_n \tilde{U}_1-\sigma_0\tilde{u}_1 = \tilde{\sigma}_1 u_0\qquad&\text{on  }\pa,
 \end{cases}
 \end{equation}
 and
  \begin{equation}\label{eq:eps2}
 \begin{cases}
 -\Delta\tilde{U}_2-\Lambda_0\tilde{U}_2 =\tilde{U}_1\qquad&\text{in  }\Omega,\\
 \partial_n \tilde{U}_2-\sigma_0\tilde{u}_2 = \tilde{\sigma}_1 \tilde{u}_1+\ \tilde{\sigma}_2 u_0\qquad&\text{on  }\pa,
 \end{cases}
 \end{equation}
 respectively. 
 
Since $\Lambda_0$ and $U_0$ are the eigenvalue and the eigenfunction of the Robin Laplacian $-\Delta^{\Rob,-\sigma_0}$, the solvability condition for \eqref{eq:eps1} reads
\[
\myscal{U_0, U_0}_{L^2(\Omega)}+\myscal{\tilde{\sigma}_1 u_0,u_0}_{L^2(\pa)}=0,
\]
which after taking into account the normalisation $\|u_0\|_{L^2(\pa)}=1$ becomes \eqref{eq:firstder}.
 
Similarly, the solvability condition for \eqref{eq:eps2} is
\begin{equation}\label{eq:solveps2}
\myscal{\tilde{U}_1, U_0}_{L^2(\Omega)}+\myscal{\tilde{\sigma}_1\tilde{u}_1+\tilde{\sigma}_2  u_0,u_0}_{L^2(\pa)}=0.
\end{equation}
We first simplify it by taking into account  the normalisation conditions on $u_0$ and $u$, with the latter being
\[
\|u\|^2_{L^2(\pa)}=\|u_0\|^2_{L^2(\pa)}+2\epsilon\myscal{\tilde{u}_1,u_0}_{L^2(\pa)}+O(\epsilon^2)=1,
\] 
which means that 
\begin{equation}\label{eq:u1u0prod}
\myscal{\tilde{u}_1,u_0}_{L^2(\pa)}=0.
\end{equation}
Hence, \eqref{eq:solveps2} becomes
\begin{equation}\label{eq:solveps22}
\tilde{\sigma}_2 = -\myscal{\tilde{U}_1, U_0}_{L^2(\Omega)}, 
\end{equation}
but we still need to find the solution $\tilde{U}_1$ from \eqref{eq:eps1} in order to evaluate this. We note that any solution of \eqref{eq:eps1} is defined modulo addition of $U_0$ and therefore seek it in the form 
\begin{equation}\label{eq:tU1}
\tilde{U}_1=c_0 U_0 + V + W, 
\end{equation}
where 
\begin{equation}\label{eq:Vdef}
V:=\left(-\Delta^\Dir-\Lambda_0\right)^{-1}U_0,
\end{equation}
 and $W$ satisfies
\[
\begin{cases}
-\Delta W-\Lambda_0 W =0\qquad&\text{in  }\Omega,\\
\partial_n W - \sigma_0 W = \tilde{\sigma}_1 u_0 - \partial_n V\qquad&\text{on  }\pa,\\
\myscal{W|_{\pa}, u_0}_{L^2(\pa)}=0,\qquad& 
\end{cases}
\]
or, equivalently, with $w=W|_{\pa}$, 
\begin{equation}\label{eq:eps1W2}
\DtN_{\Lambda_0}w=\sigma_0 w + \tilde{\sigma}_1 u_0 - \partial_n V.
\end{equation}
Expanding $w$ into the basis of eigenfunctions of the Dirichlet-to-Neumann map $\DtN_{\Lambda_0}$ as 
\[
w=\sum_{\substack{j\in\mathbb{N}\\j\ne k}} c_j  u_j^{(\Lambda_0)},\qquad c_j=\myscal{w, u_j^{(\Lambda_0)}}_{L^2(\pa)},
\]
and therefore with 
\begin{equation}\label{eq:Wseries}
W=\sum_{\substack{j\in\mathbb{N}\\j\ne k}} c_j  U_j^{(\Lambda_0)},
\end{equation}
substituting into \eqref{eq:eps1W2}, and integrating by parts easily gives 
\begin{equation}\label{eq:cj}
c_j =  \frac{\myscal{U^{(\Lambda_0)}_j, U_0}_{L^2(\Omega)}}{\sigma_j^{(\Lambda_0)}-\sigma_0},
\end{equation}
whereas a combination of \eqref{eq:u1u0prod} and \eqref{eq:tU1} gives $c_0=0$. Together, \eqref{eq:solveps22}--\eqref{eq:Vdef} and \eqref{eq:Wseries}, \eqref{eq:cj} yield \eqref{eq:secondder}.

\section*{Acknowledgements}\addcontentsline{toc}{section}{Acknowledgements}\phantomsection\label{sec:aknow}

We thank Lennie Friedlander, Mikhail Karpukhin, Marco Marletta, Nilima Nigam, Konstantin Pankrashkin, David Sher, and Denis Vinokurov for valuable comments and suggestions.

DSG acknowledges the support of the Simons Foundation for a sabbatical stay at the CRM in Montr\'eal. Research of ML was partially supported by EPSRC. Research of IP was partially supported by NSERC and FRQNT. 

The authors thank the Isaac Newton Institute for Mathematical Sciences, Cambridge, for support and hospitality during the programme \emph{Geometric spectral theory and applications}, where some work on this paper was undertaken, supported by the EPSRC grant EP/Z000580/1.


\section*{Data availability statement}\addcontentsline{toc}{section}{Data availability statement}\phantomsection\label{sec:data}

Some accompanying \texttt{Mathematica} scripts and their printouts are
available for download at \url{https://www.michaellevitin.net/DtN.html} or at
\url{https://github.com/michaellevitin/DtN}.

\phantomsection
\end{document}